\newtheorem{theorem}{Theorem}[section]
\newtheorem{lemma}[theorem]{Lemma}
\newtheorem{corollary}[theorem]{Corollary}
\theoremstyle{definition}
\theoremstyle{remark}
\newtheorem{remark}[theorem]{Remark}
\numberwithin{algorithm}{section}
\DeclareMathOperator{\diag}{diag}
\DeclareMathOperator{\rank}{rank}
\newcommand{\alg}[1]{{\sf #1}}
\newcommand{\cmp}{\mathsf{C}}
\newcommand{\defn}[1]{{\em #1}}
\newcommand{\eqv}{\mathsf{E}}
\newcommand{\far}{\mathsf{F}}
\newcommand{\krn}{\mathsf{K}}
\newcommand{\nbr}{\mathsf{N}}
\newcommand{\poly}{\mathcal{P}}
\newcommand{\range}{\mathcal{R}}
\newcommand{\rd}[1]{\check{#1}}
\newcommand{\sch}{\mathsf{S}}
\newcommand{\sk}[1]{\hat{#1}}
\newcommand{\skel}{\mathcal{Z}}
\newcommand{\spy}{\mathcal{S}}
\newcommand{\sumprime}{\sideset{}{'}\sum}
\newcommand{\tabdash}{\multicolumn{1}{|c}{---}}
\newcommand{\trans}{\mathsf{T}}
\begin{document}                        


\title{Hierarchical Interpolative Factorization for Elliptic Operators: Integral Equations}

\author{Kenneth L. Ho}{Stanford University}
\author{Lexing Ying}{Stanford University}





\begin{abstract}
 This paper introduces the hierarchical interpolative factorization for integral equations (HIF-IE) associated with elliptic problems in two and three dimensions. This factorization takes the form of an approximate generalized LU decomposition that permits the efficient application of the discretized operator and its inverse. HIF-IE is based on the recursive skeletonization algorithm but incorporates a novel combination of two key features: (1) a matrix factorization framework for sparsifying structured dense matrices and (2) a recursive dimensional reduction strategy to decrease the cost. Thus, higher-dimensional problems are effectively mapped to one dimension, and we conjecture that constructing, applying, and inverting the factorization all have linear or quasilinear complexity. Numerical experiments support this claim and further demonstrate the performance of our algorithm as a generalized fast multipole method, direct solver, and preconditioner. HIF-IE is compatible with geometric adaptivity and can handle both boundary and volume problems. MATLAB codes are freely available.
\end{abstract}

\maketitle






\section{Introduction}
This paper considers integral equations (IEs) of the form
\begin{align}
 a(x) u(x) + b(x) \int_{\Omega} K(\| x - y \|) c(y) u(y) \, d \Omega (y) = f(x), \quad x \in \Omega \subset \mathbb{R}^{d}
 \label{eqn:ie}
\end{align}
associated with elliptic partial differential equations (PDEs), where $a(x)$, $b(x)$, $c(x)$, and $f(x)$ are given functions; the integral kernel $K(r)$ is related to the fundamental solution of the underlying PDE; and $d = 2$ or $3$. Such equations encompass both boundary and volume problems and can be derived from PDEs in various ways. We give two prototypical examples below:
\begin{enumerate}
 \itemsep 1ex
 \item
  Consider the interior Dirichlet Laplace problem
  \begin{subequations}
   \label{eqn:dirich-laplace}
   \begin{align}
    \Delta u(x) &= 0, & &x \in \mathcal{D} \subset \mathbb{R}^{d}, \label{eqn:laplace-pde}\\
    u(x) &= f(x), & &x \in \partial \mathcal{D} \equiv \Gamma \label{eqn:laplace-bc}
   \end{align}
  \end{subequations}
  in a smooth, simply connected domain, which can be solved by writing $u(x)$ as the double-layer potential
  \begin{align}
   u(x) = \int_{\Gamma} \frac{\partial G}{\partial \nu_{y}} (\| x - y \|) \sigma (y) \, d \Gamma (y), \quad x \in \mathcal{D}
   \label{eqn:double-layer}
  \end{align}
  over an unknown surface density $\sigma (x)$, where
  \begin{align}
   G(r) =
   \begin{cases}
    -1/(2 \pi) \log r, & d = 2\\
    1/(4 \pi r), & d = 3
   \end{cases}
   \label{eqn:laplace-kernel}
  \end{align}
  is the fundamental solution of the free-space PDE and $\nu_{y}$ is the unit outer normal at $y \in \Gamma$. By construction, \eqref{eqn:double-layer} satisfies \eqref{eqn:laplace-pde}. To enforce the boundary condition \eqref{eqn:laplace-bc}, take the limit as $x \to \Gamma$ and use standard results from potential theory \cite{guenther:1988:prentice-hall} to obtain
  \begin{align}
   -\frac{1}{2} \sigma (x) + \int_{\Gamma} \frac{\partial G}{\partial \nu_{y}} (\| x - y \|) \sigma (y) \, d \Gamma (y) = f(x), \quad x \in \Gamma,
   \label{eqn:2k-bie}
  \end{align}
  where the integral is defined in the principal value sense. This is a boundary IE for $\sigma (x)$ of the form \eqref{eqn:ie} (up to a straightforward generalization to matrix-valued kernels).

  Alternatively, one could use the single-layer potential representation
  \begin{align*}
   u(x) = \int_{\Gamma} G(\| x - y \|) \sigma (y) \, d \Gamma (y), \quad x \in \mathcal{D},
  \end{align*}
  which immediately gives the IE
  \begin{align*}
   \int_{\Gamma} G(\| x - y \|) \sigma (y) \, d \Gamma (y) = f(x), \quad x \in \Gamma
  \end{align*}
  upon taking the limit as $x \to \Gamma$ since the integral is well-defined. Note that this has $a(x) \equiv 0$ in \eqref{eqn:ie}. Such equations are called first-kind Fredholm IEs and are generally ill-conditioned. Second-kind Fredholm IEs such as \eqref{eqn:2k-bie}, on the other hand, have $a(x) \neq 0$ for all $x$ and are usually well-conditioned.
 \item
  Consider the divergence-form PDE
  \begin{align*}
   \nabla \cdot (a(x) \nabla u(x)) = f(x), \quad x \in \Omega \subset \mathbb{R}^{d}
  \end{align*}
  and let
  \begin{align*}
   u(x) = \int_{\Omega} G(\| x - y \|) \sigma (y) \, d \Omega (y),
  \end{align*}
  where $G(r)$ is as defined in \eqref{eqn:laplace-kernel}. Then the PDE becomes the volume IE
  \begin{align*}
   a(x) \sigma(x) + \nabla a(x) \cdot \int_{\Omega} \nabla_{x} G(\| x - y \|) \sigma (y) \, d \Omega (y) = f(x), \quad x \in \Omega
  \end{align*}
  upon substitution, which again has the form \eqref{eqn:ie}.
\end{enumerate}

IEs can similarly be derived for many of the PDEs of classical physics including the Laplace, Helmholtz, Stokes, and time-harmonic Maxwell equations. In such cases, the kernel function $K(r)$ is typically singular near zero but otherwise smooth with non-compact support. For this paper, we will also require that $K(r)$ not be too oscillatory.

Discretization of \eqref{eqn:ie} using, e.g., the Nystr\"{o}m, collocation, or Galerkin method leads to a linear system
\begin{align}
 Au = f,
 \label{eqn:linear-system}
\end{align}
where $A \in \mathbb{C}^{N \times N}$ is dense with $u$ and $f$ the discrete analogues of $u(x)$ and $f(x)$, respectively. This paper is concerned with the efficient factorization and solution of such systems.

\subsection{Previous Work}
Numerical methods for solving \eqref{eqn:linear-system} can be classified into several groups. The first consists of classical direct methods like Gaussian elimination or other standard matrix factorizations \cite{golub:1996:johns-hopkins-univ}, which compute the solution exactly (in principle, to machine precision, up to conditioning) without iteration. These methods are useful when $N$ is small. However, since $A$ is dense, such algorithms generally have $O(N^{3})$ complexity, which quickly makes them infeasible as $N$ increases.

The second group is that of iterative methods, among the most popular of which are Krylov subspace methods such as conjugate gradient \cite{hestenes:1952:j-res-nat-bur-stand,van-der-vorst:1992:siam-j-sci-stat-comput} or GMRES \cite{saad:1986:siam-j-sci-stat-comput}. The number of iterations required depends on the problem and is typically small for second-kind IEs but can grow rapidly for first-kind ones. The main computational cost is the calculation of matrix-vector products at each iteration. Combined with fast multipole methods (FMMs) \cite{fong:2009:j-comput-phys,greengard:1987:j-comput-phys,greengard:1997:acta-numer,ying:2004:j-comput-phys} or other accelerated matrix multiplication schemes \cite{barnes:1986:nature,hackbusch:1989:numer-math}, such techniques can yield asymptotically optimal or near-optimal solvers with $O(N)$ or $O(N \log N)$ complexity. However, iterative methods are not as robust as their direct counterparts, especially when $a(x)$, $b(x)$, or $c(x)$ lacks regularity or has high contrast. In such cases, convergence can be slow and specialized preconditioners are often needed. Furthermore, iterative methods can be inefficient for systems involving multiple right-hand sides or low-rank updates, which is an important setting for many applications of increasing interest, including time stepping, inverse problems, and design.

The third group covers rank-structured direct solvers, which exploit the observation that certain off-diagonal blocks of $A$ are numerically low-rank in order to dramatically lower the cost. The seminal work in this area is due to Hackbusch et al.\ \cite{hackbusch:1999:computing,hackbusch:2002:computing,hackbusch:2000:computing}, whose $\mathcal{H}$- and $\mathcal{H}^{2}$-matrices have been shown to achieve linear or quasilinear complexity. Although their work has had significant theoretical impact, in practice, the constants implicit in the asymptotic scalings tend to be large due to the recursive nature of the inversion algorithms and the use of expensive hierarchical matrix-matrix multiplication.

More recent developments aimed at improving practical performance include solvers for hierarchically semiseparable (HSS) matrices \cite{chandrasekaran:2006a:siam-j-matrix-anal-appl,chandrasekaran:2006b:siam-j-matrix-anal-appl,xia:2010:numer-linear-algebra-appl} and methods based on recursive skeletonization (RS) \cite{gillman:2012:front-math-china,greengard:2009:acta-numer,ho:2012:siam-j-sci-comput,martinsson:2005:j-comput-phys}, among other related schemes \cite{ambikasaran:2013:j-sci-comput,bremer:2012:j-comput-phys,chen:2002:adv-comput-math}. These can be viewed as special cases of $\mathcal{H}^{2}$-matrices and are optimal in one dimension (1D) (e.g., boundary IEs on curves) but have superlinear complexities in higher dimensions. In particular, RS proceeds analogously to the nested dissection multifrontal method (MF) for sparse linear systems \cite{duff:1983:acm-trans-math-software,george:1973:siam-j-numer-anal}, with the so-called skeletons characterizing the off-diagonal blocks corresponding to the separator fronts. These grow as $O(N^{1/2})$ in two dimensions (2D) and $O(N^{2/3})$ in three dimensions (3D), resulting in solver complexities of $O(N^{3/2})$ and $O(N^{2})$, respectively.

Recently, Corona, Martinsson, and Zorin \cite{corona:2015:appl-comput-harmon-anal} constructed an $O(N)$ RS solver in 2D by exploiting further structure among the skeletons and using hierarchical matrix algebra. The principal observation is that for a broad class of integral kernels, the generic behavior of RS is to retain degrees of freedom (DOFs) only along the boundary of each cell in a domain partitioning. Thus, 2D problems are reduced to 1D, and the large skeleton matrices accumulated throughout the algorithm can be handled efficiently using 1D HSS techniques. However, this approach is quite involved and has yet to be realized in 3D or in complicated geometries.

\subsection{Contributions}
In this paper, we introduce the hierarchical interpolative factorization for IEs (HIF-IE), which produces an approximate generalized LU decomposition of $A$ with linear or quasilinear complexity estimates. HIF-IE is based on RS but augments it with a novel combination of two key features: (1) a matrix factorization formulation via a sparsification framework similar to that developed in \cite{chandrasekaran:2006b:siam-j-matrix-anal-appl,xia:2013:siam-j-sci-comput,xia:2010:numer-linear-algebra-appl} and (2) a recursive dimensional reduction scheme as pioneered in \cite{corona:2015:appl-comput-harmon-anal}. Unlike \cite{corona:2015:appl-comput-harmon-anal}, however, which keeps large skeleton sets but works with them implicitly using fast structured methods, our sparsification approach allows us to reduce the skeletons explicitly. This obviates the need for internal hierarchical matrix representations, which substantially simplifies the algorithm and enables it to extend naturally to 3D and to complex geometries, in addition to promoting a more direct view of the dimensional reduction process.

Figure \ref{fig:schematic} shows a schematic of HIF-IE as compared to RS in 2D.
\begin{figure}
 \includegraphics{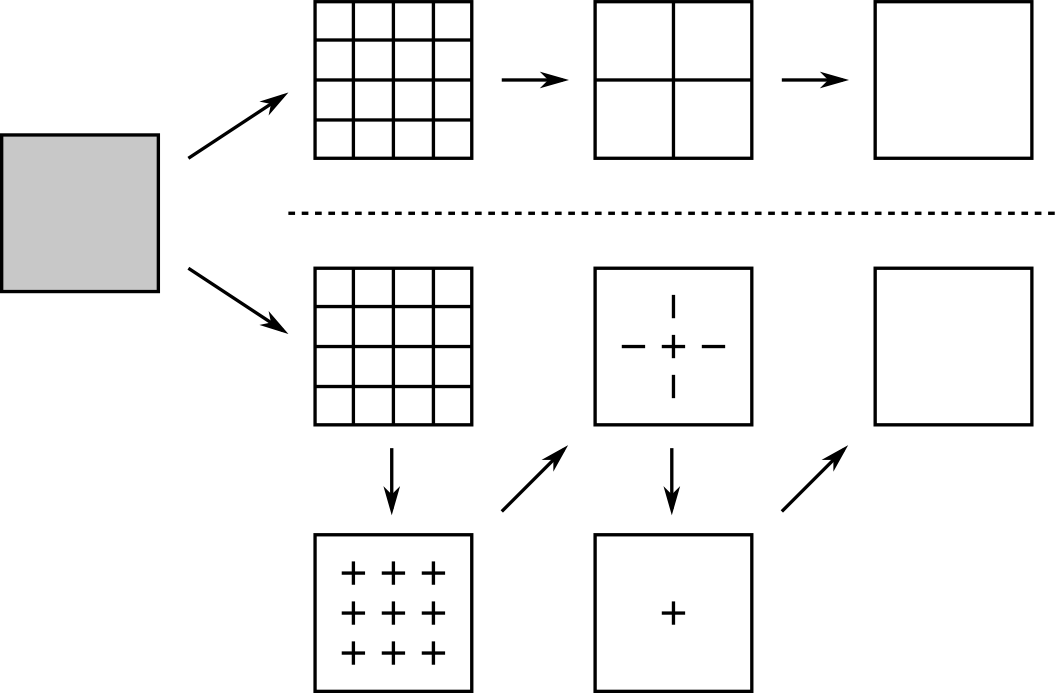}
 \caption{Schematic of RS (top) and HIF-IE (bottom) in 2D. The gray box (left) represents a uniformly discretized square; the lines in the interior of the boxes (right) denote the remaining DOFs after each level of skeletonization.}
 \label{fig:schematic}
\end{figure}
In RS (top), the domain is partitioned into a set of square cells at each level of a tree hierarchy. Each cell is skeletonized from the finest level to the coarsest, leaving DOFs only along cell interfaces. The size of these interfaces evidently grows as we march up the tree, which ultimately leads to the observed $O(N^{3/2})$ complexity.

In contrast, in HIF-IE (bottom), we start by skeletonizing the cells at the finest level as in RS but, before proceeding further, perform an additional level of edge skeletonization by grouping the remaining DOFs by cell edge. This respects the 1D structure of the interface geometry and allows more DOFs to be eliminated. The combination of cell and edge compression is then repeated up the tree, with the result that the skeleton growth is now suppressed. The reduction from 2D (square cells) to 1D (edges) to zero dimensions (0D) (points) is completely explicit. Extension to 3D is immediate by skeletonizing cubic cells, then faces, then edges at each level to execute a reduction from 3D to 2D to 1D to 0D. This tight control of the skeleton size is essential for achieving near-optimal scaling.

Once the factorization has been constructed, it can be used to rapidly apply both $A$ and $A^{-1}$, thereby serving as a generalized FMM, direct solver, or preconditioner (depending on the accuracy). Other capabilities are possible, too, though they will not be pursued here. As such, HIF-IE is considerably more general than many previous non--factorization-based fast direct solvers \cite{chandrasekaran:2006a:siam-j-matrix-anal-appl,corona:2015:appl-comput-harmon-anal,gillman:2012:front-math-china,ho:2012:siam-j-sci-comput,martinsson:2005:j-comput-phys}, which facilitate only the application of the inverse.

Extensive numerical experiments reveal strong evidence for quasilinear complexity and demonstrate that HIF-IE can accurately approximate various integral operators in both boundary and volume settings with high practical efficiency.

\subsection{Outline}
The remainder of this paper is organized as follows. In Section \ref{sec:prelim}, we introduce the basic tools needed for our algorithm, including an efficient matrix sparsification operation that we call skeletonization. In Section \ref{sec:rskelf}, we describe the recursive skeletonization factorization (RSF), a reformulation of RS using our new factorization approach. This will serve to familiarize the reader with our sparsification framework as well as to highlight the fundamental difficulty associated with RS methods in 2D and 3D. In Section \ref{sec:hifie}, we present HIF-IE as an extension of RSF with additional levels of skeletonization corresponding to recursive dimensional reductions. Although we cannot yet provide a rigorous complexity analysis, estimates based on well-supported rank assumptions suggest that HIF-IE achieves linear or quasilinear complexity. This conjecture is borne out by numerical experiments, which we detail in Section \ref{sec:results}. Finally, Section \ref{sec:conclusion} concludes with some discussion and future directions.

\section{Preliminaries}
\label{sec:prelim}
In this section, we first list our notational conventions and then describe the basic elements of our algorithm.

Uppercase letters will generally denote matrices, while the lowercase letters $c$, $p$, $q$, $r$, and $s$ denote ordered sets of indices, each of which is associated with a DOF in the problem. For a given index set $c$, its cardinality is written $|c|$. The (unordered) complement of $c$ is given by $c^{\cmp}$, with the parent set to be understood from the context. The uppercase letter $C$ is reserved to denote a collection of disjoint index sets.

Given a matrix $A$, $A_{pq}$ is the submatrix with rows and columns restricted to the index sets $p$ and $q$, respectively. We also use the MATLAB notation $A_{:,q}$ to denote the submatrix with columns restricted to $q$.

Throughout, $\| \cdot \|$ refers to the $2$-norm.

\subsection{Sparse Elimination}
Let
\begin{align}
 A =
 \begin{bmatrix}
  A_{pp} & A_{pq}\\
  A_{qp} & A_{qq} & A_{qr}\\
  & A_{rq} & A_{rr}
 \end{bmatrix}
 \label{eqn:sparse-matrix}
\end{align}
be a matrix defined over the indices $(p, q, r)$. This matrix structure often appears in sparse PDE problems, where, for example, $p$ corresponds to the interior DOFs of a region $\mathcal{D}$, $q$ to the DOFs on the boundary $\partial \mathcal{D}$, and $r$ to the external region $\Omega \setminus \bar{\mathcal{D}}$, which should be thought of as large. In this setting, the DOFs $p$ and $r$ are separated by $q$ and hence do not directly interact, resulting in the form \eqref{eqn:sparse-matrix}.

Our first tool is quite standard and concerns the efficient elimination of DOFs from such sparse matrices.

\begin{lemma}
 \label{lem:sparse-elim}
 Let $A$ be given by \eqref{eqn:sparse-matrix} and write $A_{pp} = L_{p} D_{p} U_{p}$ in factored form, where $L_{p}$ and $U_{p}$ are unit triangular matrices (up to permutation). If $A_{pp}$ is nonsingular, then
 \begin{align}
  R_{p}^{*} A S_{p} =
  \begin{bmatrix}
   D_{p}\\
   & B_{qq} & A_{qr}\\
   & A_{rq} & A_{rr}
  \end{bmatrix},
  \label{eqn:sparse-elim}
 \end{align}
 where
 \begin{align*}
  R_{p}^{*} &=
  \begin{bmatrix}
   I\\
   -A_{qp} U_{p}^{-1} D_{p}^{-1} & I\\
   & & I
  \end{bmatrix}
  \begin{bmatrix}
   L_{p}^{-1}\\
   & I\\
   & & I
  \end{bmatrix},\\
  S_{p} &=
  \begin{bmatrix}
   U_{p}^{-1}\\
   & I\\
   & & I
  \end{bmatrix}
  \begin{bmatrix}
   I & -D_{p}^{-1} L_{p}^{-1} A_{pq}\\
   & I\\
   & & I
  \end{bmatrix}
 \end{align*}
 and $B_{qq} = A_{qq} - A_{qp} A_{pp}^{-1} A_{pq}$ is the associated Schur complement.
\end{lemma}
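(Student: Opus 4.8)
The plan is to prove \eqref{eqn:sparse-elim} by a direct block computation, exploiting the structural zeros $A_{pr} = A_{rp} = 0$ built into the sparse form \eqref{eqn:sparse-matrix}. First I would collapse the two-factor definitions of $R_p^*$ and $S_p$ into single block-triangular matrices: multiplying out the factors and using $A_{pp}^{-1} = U_p^{-1} D_p^{-1} L_p^{-1}$, one finds that $R_p^*$ is block lower-triangular with diagonal blocks $L_p^{-1}, I, I$ and sole nontrivial off-diagonal block $-A_{qp} A_{pp}^{-1}$ in the $(q,p)$ position, while $S_p$ is block upper-triangular with diagonal blocks $U_p^{-1}, I, I$ and sole nontrivial off-diagonal block $-A_{pp}^{-1} A_{pq}$ in the $(p,q)$ position. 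Both are well-defined and invertible because the nonsingularity of $A_{pp}$ guarantees the $LDU$ factorization exists with $L_p, D_p, U_p$ all invertible.

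Next I would form $A S_p$. Using $A_{pp} = L_p D_p U_p$, the $(p,p)$ block becomes $A_{pp} U_p^{-1} = L_p D_p$; the $(p,q)$ block is annihilated since $-A_{pp} A_{pp}^{-1} A_{pq} + A_{pq} = 0$; the $(q,p)$ block becomes $A_{qp} U_p^{-1}$; the $(q,q)$ block becomes $A_{qq} - A_{qp} A_{pp}^{-1} A_{pq} = B_{qq}$; and the entire $r$-row and $r$-column pass through unchanged, since $S_p$ acts as the identity there and $A_{pr} = A_{rp} = 0$ so no $p$-coupling leaks in. Left-multiplying by $R_p^*$ then rescales the $(p,p)$ block to $L_p^{-1} L_p D_p = D_p$ and clears the $(q,p)$ block by the cancellation $-A_{qp} A_{pp}^{-1} (L_p D_p) + A_{qp} U_p^{-1} = -A_{qp} U_p^{-1} + A_{qp} U_p^{-1} = 0$, while again leaving the $r$-block untouched; the result is exactly the right-hand side of \eqref{eqn:sparse-elim}.

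There is no genuine obstacle here: the whole argument is bookkeeping, and the only point deserving care is tracking where each structural zero of \eqref{eqn:sparse-matrix} is invoked so that the (large) $r$-block provably survives intact. I would close with the remark that $R_p$ and $S_p$ are themselves unit block-triangular up to permutation, so \eqref{eqn:sparse-elim} exhibits the elimination of the index set $p$ as a single step of a generalized $LU$ decomposition of $A$ — the viewpoint on which the factorizations of the later sections are built.
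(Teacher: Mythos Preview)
Your direct block computation is correct and complete; the paper itself states this lemma without proof, treating it as an elementary verification, so your argument supplies precisely the bookkeeping the authors omit.
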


Note that the indices $p$ have been decoupled from the rest. Regarding the subsystem in \eqref{eqn:sparse-elim} over the indices $(q, r)$ only, we may therefore say that the DOFs $p$ have been eliminated. The operators $R_{p}$ and $S_{p}$ carry out this elimination, which furthermore is particularly efficient since the interactions involving the large index set $r$ are unchanged.

\subsection{Interpolative Decomposition}
Our next tool is the interpolative decomposition (ID) \cite{cheng:2005:siam-j-sci-comput} for low-rank matrices, which we present in a somewhat nonstandard form below.

\begin{lemma}
 \label{lem:id}
 Let $A = A_{:,q} \in \mathbb{C}^{m \times n}$ with rank $k \leq \min (m, n)$. Then there exist a partitioning $q = \sk{q} \cup \rd{q}$ with $|\sk{q}| = k$ and a matrix $T_{q} \in \mathbb{C}^{k \times n}$ such that $A_{:,\rd{q}} = A_{:,\sk{q}} T_{q}$.
\end{lemma}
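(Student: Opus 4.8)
The plan is to prove Lemma \ref{lem:id} as essentially a restatement of the rank-revealing QR factorization. Since $A = A_{:,q}$ has rank exactly $k$, there are $k$ columns of $A$ that form a basis for its column space; I would isolate these via a pivoted QR decomposition and then express the remaining columns in terms of them. The partitioning $q = \sk{q} \cup \rd{q}$ will record which columns were selected as pivots (the \emph{skeleton} columns $\sk{q}$) and which were not (the \emph{redundant} columns $\rd{q}$), and the matrix $T_{q}$ will hold the expansion coefficients.

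First I would apply the Gram--Schmidt process with column pivoting (equivalently, Householder QR with column pivoting) to $A$, producing a column permutation that moves $k$ linearly independent columns to the front. Concretely, write $A \Pi = Q R$, where $\Pi$ is a permutation, $Q \in \mathbb{C}^{m \times k}$ has orthonormal columns, and $R \in \mathbb{C}^{k \times n}$ is upper triangular in its leading $k \times k$ block; because $\rank A = k$, the process terminates after exactly $k$ steps with the leading block $R_{11} \in \mathbb{C}^{k \times k}$ nonsingular, and the trailing rows vanish. Partition $R = \begin{bmatrix} R_{11} & R_{12} \end{bmatrix}$ and let $\sk{q}$ be the indices of $q$ corresponding to the first $k$ columns of $A\Pi$ and $\rd{q}$ the rest, so that $A_{:,\sk{q}} = Q R_{11}$ and $A_{:,\rd{q}} = Q R_{12}$. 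Then setting $T_{q}$ to be the matrix whose columns indexed by $\sk{q}$ form the identity $I_{k}$ and whose columns indexed by $\rd{q}$ equal $R_{11}^{-1} R_{12}$ gives, after undoing the permutation, $A_{:,\rd{q}} = Q R_{12} = (Q R_{11})(R_{11}^{-1} R_{12}) = A_{:,\sk{q}} (R_{11}^{-1} R_{12})$, and more compactly $A = A_{:,q} = A_{:,\sk{q}} T_{q}$ over all of $q$. This establishes both the claimed partitioning with $|\sk{q}| = k$ and the existence of $T_{q} \in \mathbb{C}^{k \times n}$ with the stated interpolation property.

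There is essentially no hard step here: the only substantive point is the nonsingularity of $R_{11}$, which follows immediately from the rank hypothesis since the first $k$ pivot columns are linearly independent by construction of the pivoted factorization. One could alternatively phrase the argument without QR at all: pick any $k$ columns of $A$ spanning the column space, call their indices $\sk{q}$, and note that every other column lies in that span and hence is a unique linear combination of them, which defines $T_{q}$ columnwise. I would likely present the QR version since it is constructive and connects to the standard ID literature \cite{cheng:2005:siam-j-sci-comput}, but either suffices. (In practice one uses a numerically stable variant and accepts a small approximation error with $T_q$ bounded in norm, but the exact statement as given requires only the bare linear-algebra facts above.)
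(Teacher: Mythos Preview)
Your proof is correct and follows essentially the same route as the paper: a thin pivoted QR factorization $A\Pi = Q[R_{1}\;R_{2}]$ with $R_{1}$ nonsingular, identification of $\sk{q}$ with the first $k$ pivot columns, and $T_{q} = R_{1}^{-1}R_{2}$ so that $A_{:,\rd{q}} = A_{:,\sk{q}}T_{q}$. Your additional remark that one could bypass QR and simply select any $k$ spanning columns is a nice aside, and your attempt to reconcile the stated dimension $T_{q}\in\mathbb{C}^{k\times n}$ by padding with identity columns is reasonable, though the paper itself just takes $T_{q}=R_{1}^{-1}R_{2}\in\mathbb{C}^{k\times(n-k)}$ without addressing the discrepancy.
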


\begin{proof}
 Let
 \begin{align*}
  A \Pi = QR = Q
  \begin{bmatrix}
   R_{1} & R_{2}
  \end{bmatrix}
 \end{align*}
 be a so-called thin pivoted QR decomposition of $A$, where $Q \in \mathbb{C}^{m \times k}$ is unitary, $R \in \mathbb{C}^{k \times n}$ is upper triangular, and the permutation matrix $\Pi \in \{ 0, 1 \}^{n \times n}$ has been chosen so that $R_{1} \in \mathbb{C}^{k \times k}$ is nonsingular. Then identifying the first $k$ pivots with $\sk{q}$ and the remainder with $\rd{q}$,
 \begin{align*}
  A_{:,\rd{q}} = Q R_{2} = (Q R_{1}) (R_{1}^{-1} R_{2}) \equiv A_{:,\sk{q}} T_{q}
 \end{align*}
 for $T_{q} = R_{1}^{-1} R_{2}$.
\end{proof}

The ID can also be written more traditionally as
\begin{align*}
 A = A_{:,\sk{q}}
 \begin{bmatrix}
  I & T_{q}
 \end{bmatrix} \Pi
\end{align*}
where $\Pi$ is the permutation matrix associated with the ordering $(\sk{q}, \rd{q})$. We call $\sk{q}$ and $\rd{q}$ the \defn{skeleton} and \defn{redundant} indices, respectively. Lemma \ref{lem:id} states that the redundant columns of $A$ can be interpolated from its skeleton columns. The following shows that the ID can also be viewed as a sparsification operator.

\begin{corollary}
 Let $A = A_{:,q}$ be a low-rank matrix. If $q = \sk{q} \cup \rd{q}$ and $T_{q}$ are such that $A_{:,\rd{q}} = A_{:,\sk{q}} T_{q}$, then
 \begin{align*}
  \begin{bmatrix}
   A_{:,\rd{q}} & A_{:,\sk{q}}
  \end{bmatrix}
  \begin{bmatrix}
   I\\
   -T_{q} & I
  \end{bmatrix} =
  \begin{bmatrix}
   0 & A_{:,\sk{q}}
  \end{bmatrix}.
 \end{align*}
\end{corollary}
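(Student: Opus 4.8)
The statement is essentially a one-line block-matrix identity, so the plan is to multiply out the left-hand side and invoke the defining relation $A_{:,\rd{q}} = A_{:,\sk{q}} T_{q}$. Before computing, I would first pin down the block dimensions: since $A_{:,\sk{q}} \in \mathbb{C}^{m \times |\sk{q}|}$ and $A_{:,\rd{q}} \in \mathbb{C}^{m \times |\rd{q}|}$, the interpolation matrix $T_{q}$ must be $|\sk{q}| \times |\rd{q}|$. Hence the unit lower-triangular factor appearing on the right in the statement is square of size $|\rd{q}| + |\sk{q}|$, partitioned conformally with the column partition $(\rd{q}, \sk{q})$ of $\begin{bmatrix} A_{:,\rd{q}} & A_{:,\sk{q}} \end{bmatrix}$, so that the product is well-defined.

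Next I would carry out the multiplication one block column at a time. Acting on the first block column of the right factor gives $A_{:,\rd{q}} \cdot I + A_{:,\sk{q}} \cdot (-T_{q}) = A_{:,\rd{q}} - A_{:,\sk{q}} T_{q}$, which vanishes identically by the hypothesis relating the redundant and skeleton columns. Acting on the second block column gives $A_{:,\rd{q}} \cdot 0 + A_{:,\sk{q}} \cdot I = A_{:,\sk{q}}$. Concatenating the two yields $\begin{bmatrix} 0 & A_{:,\sk{q}} \end{bmatrix}$, which is exactly the asserted identity.

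There is essentially no obstacle here; the only point requiring a moment's attention is the conformability of the block partitions, and this is guaranteed precisely because Lemma \ref{lem:id} produces $T_{q}$ with $|\sk{q}|$ rows and (in this specialization, where $T_{q}$ multiplies $A_{:,\sk{q}}$ to reproduce $A_{:,\rd{q}}$) with $|\rd{q}|$ columns. Conceptually, the corollary merely repackages Lemma \ref{lem:id} in ``annihilation'' form: right-multiplication by the unit triangular matrix uses the skeleton columns to zero out the redundant ones, which is the sense in which the ID acts as a sparsification operator and is the form that will be used throughout the factorization.
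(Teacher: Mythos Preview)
Your proof is correct and is exactly the direct block-multiplication argument one would expect; the paper itself states this corollary without proof, treating it as immediate from the hypothesis $A_{:,\rd{q}} = A_{:,\sk{q}} T_{q}$. There is nothing to add.
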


In general, let $A_{:,\rd{q}} = A_{:,\sk{q}} T_{q} + E$ for some error matrix $E$ and characterize the ID by the functions $\alpha (n, k)$ and $\beta (n, k)$ such that
\begin{align}
 \| T_{q} \| \leq \alpha (n, k), \quad \| E \| \leq \beta (n, k) \sigma_{k + 1} (A),
 \label{eqn:id-bounds}
\end{align}
where $\sigma_{k + 1} (A)$ is the $(k + 1)$st largest singular value of $A$. If $|\alpha (n, k)|$ and $|\beta (n, k)|$ are not too large, then \eqref{eqn:id-bounds} implies that the reconstruction of $A_{:,\rd{q}}$ is stable and accurate. There exists an ID with
\begin{align}
 \alpha (n, k) = \sqrt{f^{2} k(n - k)}, \quad \beta (n, k) = \sqrt{1 + f^{2} k(n - k)}
 \label{eqn:id-bounds-fcn}
\end{align}
for $f = 1$, but computing it can take exponential time, requiring the combinatorial maximization of a submatrix determinant. However, an ID satisfying \eqref{eqn:id-bounds-fcn} with any $f > 1$ can be computed in polynomial time \cite{gu:1996:siam-j-sci-comput}. In this paper, we use the algorithm of \cite{cheng:2005:siam-j-sci-comput} based on a simple pivoted QR decomposition, which has a possibility of failure but seems to consistently achieve \eqref{eqn:id-bounds-fcn} with $f = 2$ in practice at a cost of $O(kmn)$ operations. Fast algorithms based on random sampling are also available \cite{halko:2011:siam-rev}, but these can incur some loss of accuracy (see also Section \ref{sec:hifie:accel-comp}).

The ID can be applied in both fixed and adaptive rank settings. In the former, the rank $k$ is specified, while, in the latter, the approximation error is specified and the rank adjusted to achieve (an estimate of) it. Hereafter, we consider the ID only in the adaptive sense, using the relative magnitudes of the pivots to adaptively select $k$ such that $\| E \| \lesssim \epsilon \| A \|$ for any specified relative precision $\epsilon > 0$.

\subsection{Skeletonization}
We now combine Lemmas \ref{lem:sparse-elim} and \ref{lem:id} to efficiently eliminate redundant DOFs from dense matrices with low-rank off-diagonal blocks.

\begin{lemma}
 \label{lem:skel}
 Let
 \begin{align*}
  A =
  \begin{bmatrix}
   A_{pp} & A_{pq}\\
   A_{qp} & A_{qq}
  \end{bmatrix}
 \end{align*}
 with $A_{pq}$ and $A_{qp}$ low-rank, and let $p = \sk{p} \cup \rd{p}$ and $T_{p}$ be such that
 \begin{align*}
  \begin{bmatrix}
   A_{q \rd{p}}\\
   A_{\rd{p} q}^{*}
  \end{bmatrix} =
  \begin{bmatrix}
   A_{q \sk{p}}\\
   A_{\sk{p} q}^{*}
  \end{bmatrix} T_{p},
 \end{align*}
 i.e., $A_{q \rd{p}} = A_{q \sk{p}} T_{p}$ and $A_{\rd{p} q} = T_{p}^{*} A_{\sk{p} q}$. Without loss of generality, write
 \begin{align*}
  A =
  \begin{bmatrix}
   A_{\rd{p} \rd{p}} & A_{\rd{p} \sk{p}} & A_{\rd{p} q}\\
   A_{\sk{p} \rd{p}} & A_{\sk{p} \sk{p}} & A_{\sk{p} q}\\
   A_{q \rd{p}} & A_{q \sk{p}} & A_{qq}
  \end{bmatrix}
 \end{align*}
 and define
 \begin{align*}
  Q_{p} =
  \begin{bmatrix}
   I\\
   -T_{p} & I\\
   & & I
  \end{bmatrix}.
 \end{align*}
 Then
 \begin{align}
  Q_{p}^{*} A Q_{p} =
  \begin{bmatrix}
   B_{\rd{p} \rd{p}} & B_{\rd{p} \sk{p}}\\
   B_{\sk{p} \rd{p}} & A_{\sk{p} \sk{p}} & A_{\sk{p} q}\\
   & A_{q \sk{p}} & A_{qq}
  \end{bmatrix},
  \label{eqn:id-sparse}
 \end{align}
 where
 \begin{align*}
  B_{\rd{p} \rd{p}} &= A_{\rd{p} \rd{p}} - T_{p}^{*} A_{\sk{p} \rd{p}} - A_{\rd{p} \sk{p}} T_{p} + T_{p}^{*} A_{\sk{p} \sk{p}} T_{p},\\
  B_{\rd{p} \sk{p}} &= A_{\rd{p} \sk{p}} - T_{p}^{*} A_{\sk{p} \sk{p}},\\
  B_{\sk{p} \rd{p}} &= A_{\sk{p} \rd{p}} - A_{\sk{p} \sk{p}} T_{p},
 \end{align*}
 so
 \begin{align}
  \label{eqn:skel}
  R_{\rd{p}}^{*} Q_{p}^{*} A Q_{p} S_{\rd{p}} =
  \begin{bmatrix}
   D_{\rd{p}}\\
   & B_{\sk{p} \sk{p}} & A_{\sk{p} q}\\
   & A_{q \sk{p}} & A_{qq}
  \end{bmatrix} \equiv \skel_{p} (A),
 \end{align}
 where $R_{\rd{p}}$ and $S_{\rd{p}}$ are the elimination operators of Lemma \ref{lem:sparse-elim} associated with $\rd{p}$ and $B_{\sk{p} \sk{p}} = A_{\sk{p} \sk{p}} - B_{\sk{p} \rd{p}} B_{\rd{p} \rd{p}}^{-1} B_{\rd{p} \sk{p}}$, assuming that $B_{\rd{p} \rd{p}}$ is nonsingular.
\end{lemma}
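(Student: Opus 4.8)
The plan is to prove \eqref{eqn:id-sparse} and \eqref{eqn:skel} in two stages, corresponding exactly to the two tools already developed. The statement really just composes Lemma \ref{lem:id} (the sparsification corollary) with Lemma \ref{lem:sparse-elim}, so the proof is essentially a direct computation; there is no genuine obstacle, only the need to keep the block bookkeeping straight.

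\emph{Stage 1: applying $Q_{p}$.} First I would verify \eqref{eqn:id-sparse} by direct block multiplication. Writing $A$ in the $(\rd{p}, \sk{p}, q)$ ordering and conjugating by $Q_{p}$, the key point is that $Q_{p}$ adds $-T_{p}$ times the $\sk{p}$-block to the $\rd{p}$-block (on the right, via $Q_{p}$) and $Q_{p}^{*}$ does the analogous operation on the left. The only blocks that move are those touching $\rd{p}$. Using the ID relations $A_{q\rd{p}} = A_{q\sk{p}} T_{p}$ and $A_{\rd{p}q} = T_{p}^{*} A_{\sk{p}q}$, the $(\rd{p}, q)$ block becomes $A_{\rd{p}q} - T_{p}^{*} A_{\sk{p}q} = 0$ and likewise the $(q, \rd{p})$ block becomes $A_{q\rd{p}} - A_{q\sk{p}} T_{p} = 0$; the $(\sk{p}, q)$ and $(q, \sk{p})$ and $(q,q)$ blocks are untouched since $Q_{p}$ acts as the identity on $q$. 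The remaining three blocks $B_{\rd{p}\rd{p}}$, $B_{\rd{p}\sk{p}}$, $B_{\sk{p}\rd{p}}$ come out of the same multiplication and match the stated formulas; this is the one place one must be a little careful, expanding $(Q_{p}^{*} A Q_{p})_{\rd{p}\rd{p}} = A_{\rd{p}\rd{p}} - T_{p}^{*} A_{\sk{p}\rd{p}} - A_{\rd{p}\sk{p}} T_{p} + T_{p}^{*} A_{\sk{p}\sk{p}} T_{p}$ and similarly for the off-diagonal pieces.

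\emph{Stage 2: eliminating $\rd{p}$.} Now I would observe that the matrix \eqref{eqn:id-sparse} has exactly the sparse structure \eqref{eqn:sparse-matrix} of Lemma \ref{lem:sparse-elim}, with $\rd{p}$ playing the role of the interior indices $p$, $\sk{p}$ the role of the separator $q$, and $q$ the role of the exterior $r$: indeed the $(\rd{p}, q)$ and $(q, \rd{p})$ blocks vanished in Stage 1, so $\rd{p}$ and $q$ do not directly interact. Applying Lemma \ref{lem:sparse-elim} with the factorization $B_{\rd{p}\rd{p}} = L_{\rd{p}} D_{\rd{p}} U_{\rd{p}}$ (which exists since $B_{\rd{p}\rd{p}}$ is assumed nonsingular) immediately yields \eqref{eqn:skel}, with $R_{\rd{p}}$, $S_{\rd{p}}$ the corresponding elimination operators and the Schur complement $B_{\sk{p}\sk{p}} = A_{\sk{p}\sk{p}} - B_{\sk{p}\rd{p}} B_{\rd{p}\rd{p}}^{-1} B_{\rd{p}\sk{p}}$ appearing in the $(\sk{p},\sk{p})$ position; note that because $q$ is in the role of $r$, the blocks $A_{\sk{p}q}$, $A_{q\sk{p}}$, $A_{qq}$ are carried through unchanged, exactly as asserted.

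Thus the entire argument is a composition of two already-proven lemmas, and the only part requiring care — though still routine — is the block arithmetic in Stage 1 confirming the formulas for $B_{\rd{p}\rd{p}}$, $B_{\rd{p}\sk{p}}$, and $B_{\sk{p}\rd{p}}$; I would present that computation explicitly and then simply invoke Lemma \ref{lem:sparse-elim} for Stage 2.
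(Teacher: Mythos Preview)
Your proposal is correct and is exactly the approach the paper intends: the lemma is stated without proof in the paper, being an immediate composition of the ID sparsification (Corollary to Lemma~\ref{lem:id}) with the sparse elimination of Lemma~\ref{lem:sparse-elim}, and your two-stage breakdown matches this precisely.
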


In essence, the ID sparsifies $A$ by decoupling $\rd{p}$ from $q$, thereby allowing it to be eliminated using efficient sparse techniques. We refer to this procedure as \defn{skeletonization} since only the skeletons $\sk{p}$ remain. Note that the interactions involving $q = p^{\cmp}$ are unchanged. A very similar approach has previously been described in the context of HSS ULV decompositions \cite{chandrasekaran:2006b:siam-j-matrix-anal-appl} by combining the structure-preserving rank-revealing factorization \cite{xia:2012:siam-j-matrix-anal-appl} with reduced matrices \cite{xia:2013:siam-j-sci-comput}.

In general, the ID often only approximately sparsifies $A$ (for example, if its off-diagonal blocks are low-rank only to a specified numerical precision) so that \eqref{eqn:id-sparse} and consequently \eqref{eqn:skel} need not hold exactly. In such cases, the skeletonization operator $\skel_{p} (\cdot)$ should be interpreted as also including an intermediate truncation step that enforces sparsity explicitly. For notational convenience, however, we will continue to identify the left- and right-hand sides of \eqref{eqn:skel} by writing $\skel_{p} (A) \approx R_{\rd{p}}^{*} Q_{p}^{*} A Q_{p} S_{\rd{p}}$, with the truncation to be understood implicitly.

In this paper, we often work with a collection $C$ of disjoint index sets, where $A_{c,c^{\cmp}}$ and $A_{c^{\cmp},c}$ are numerically low-rank for all $c \in C$. Applying Lemma \ref{lem:skel} to all $c \in C$ gives
\begin{align*}
 \skel_{C} (A) \approx U^{*} AV, \qquad U = \prod_{c \in C} Q_{c} R_{\rd{c}}, \quad V = \prod_{c \in C} Q_{c} S_{\rd{c}},
\end{align*}
where the redundant DOFs $\rd{c}$ for each $c \in C$ have been decoupled from the rest and the matrix products over $C$ can be taken in any order. The resulting skeletonized matrix $\skel_{C} (A)$ is significantly sparsified and has a block diagonal structure over the index groups
\begin{align*}
 \theta = \left( \bigcup_{c \in C} \{ \rd{c} \} \right) \cup \left\{ s \setminus \bigcup_{c \in C} \rd{c} \right\},
\end{align*}
where the outer union is to be understood as acting on collections of index sets and $s = \{ 1, \dots, N \}$ is the set of all indices.

\section{Recursive Skeletonization Factorization}
\label{sec:rskelf}
In this section, we present RSF, a reformulation of RS \cite{gillman:2012:front-math-china,greengard:2009:acta-numer,ho:2012:siam-j-sci-comput,martinsson:2005:j-comput-phys} as a matrix factorization using the sparsification view of skeletonization as developed in Lemma \ref{lem:skel}. Mathematically, RSF is identical to RS but expresses the matrix $A$ as a (multiplicative) multilevel generalized LU decomposition instead of as an additive hierarchical low-rank update. This representation enables much simpler algorithms for applying $A$ and $A^{-1}$ as well as establishes a direct connection with MF \cite{duff:1983:acm-trans-math-software,george:1973:siam-j-numer-anal} for sparse matrices, which produces a (strict) LU decomposition using Lemma \ref{lem:sparse-elim}. Indeed, RSF is essentially just MF with pre-sparsification via the ID at each level. This point of view places methods for structured dense and sparse matrices within a common framework, which provides a potential means to transfer techniques from one class to the other.

Note that because RSF is based on elimination, it requires that certain intermediate matrices be invertible, which in general means that $A$ must be square. This is a slight limitation when compared to RS, which can be used, for example, as a generalized FMM \cite{gillman:2012:front-math-china,ho:2012:siam-j-sci-comput} or least squares solver \cite{ho:2014:siam-j-matrix-anal-appl} for rectangular matrices.

We begin with a detailed description of RSF in 2D before extending to 3D in the natural way (the 1D case will not be treated but should be obvious from the discussion). The same presentation framework will also be used for HIF-IE in Section \ref{sec:hifie}, which we hope will help make clear the specific innovations responsible for its improved complexity estimates.

\subsection{Two Dimensions}
\label{sec:rskelf:2d}
Consider the IE \eqref{eqn:ie} on $\Omega = (0, 1)^{2}$, discretized using a piecewise constant collocation method over a uniform $n \times n$ grid for simplicity. More general domains and discretizations can be handled without difficulty, but the current setting will serve to illustrate the main ideas. Let $h$ be the step size in each direction and assume that $n = 1/h = 2^{L} m$, where $m = O(1)$ is a small integer. Integer pairs $j = (j_{1}, j_{2})$ index the elements $\Omega_{j} = h(j_{1} - 1, j_{1}) \times h(j_{2} - 1, j_{2})$ and their centers $x_{j} = h(j_{1} - 1/2, j_{2} - 1/2)$ for $1 \leq j_{1}, j_{2} \leq n$. With $\{ x_{j} \}$ as the collocation points, the discrete system \eqref{eqn:linear-system} reads
\begin{align*}
 a_{i} u_{i} + b_{i} \sum_{j} K_{ij} c_{j} u_{j} = f_{i}
\end{align*}
at each $x_{i}$, where $a_{j} = a(x_{j})$, $b_{j} = b(x_{j})$, $c_{j} = c(x_{j})$, and $f_{j} = f(x_{j})$; $u_{j}$ is the approximation to $u(x_{j})$; and
\begin{align}
 K_{ij} = \int_{\Omega_{j}} K(\| x_{i} - y \|) \, d \Omega (y).
 \label{eqn:kernel-integral}
\end{align}
Note that $A$ is not stored since it is dense; rather, its entries are generated as needed. The total number of DOFs is $N = n^{2}$, each of which is associated with a point $x_{j}$ and an index in $s$.

The algorithm proceeds by eliminating DOFs level by level. At each level $\ell$, the set of DOFs that have not been eliminated are called \defn{active} with indices $s_{\ell}$. Initially, we set $A_{0} = A$ and $s_{0} = s$. Figure \ref{fig:rskelf2} shows the active DOFs at each level for a representative example.
\begin{figure}
 \centering
 \begin{subfigure}{0.2\textwidth}
  \includegraphics[width=\textwidth]{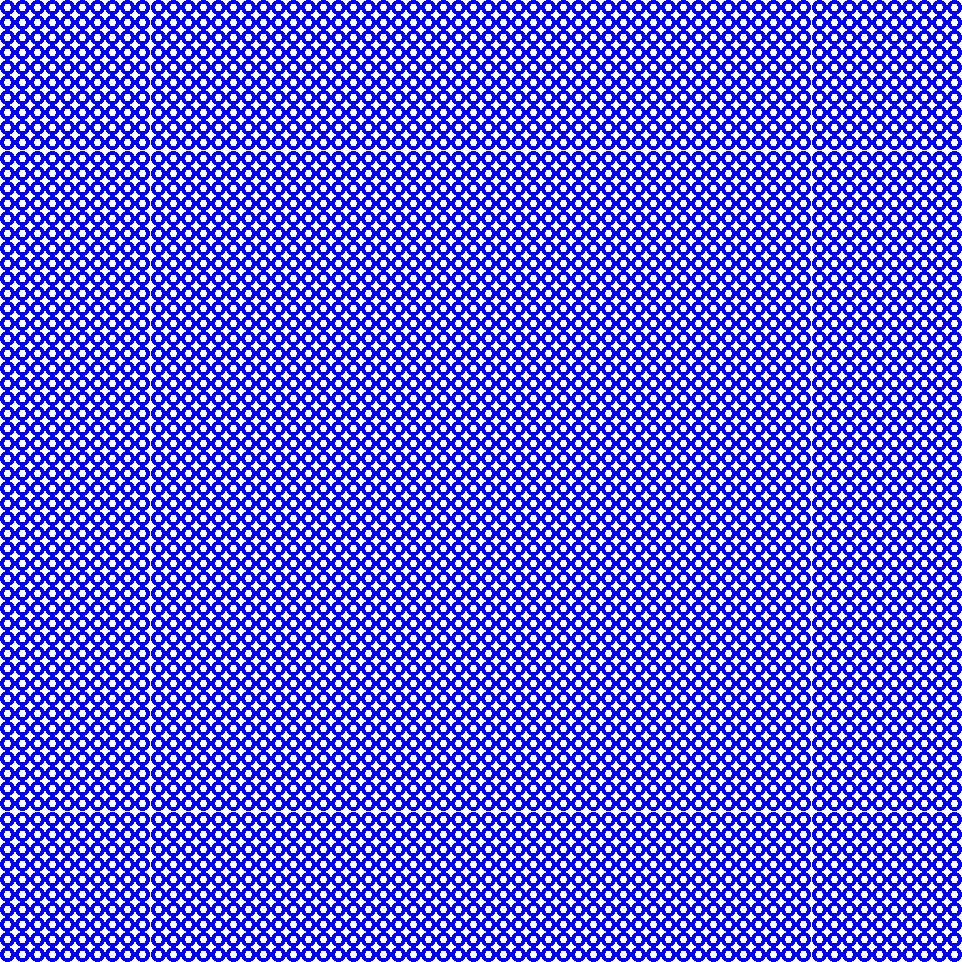}
  \caption*{$\ell = 0$}
 \end{subfigure}
 \quad
 \begin{subfigure}{0.2\textwidth}
  \includegraphics[width=\textwidth]{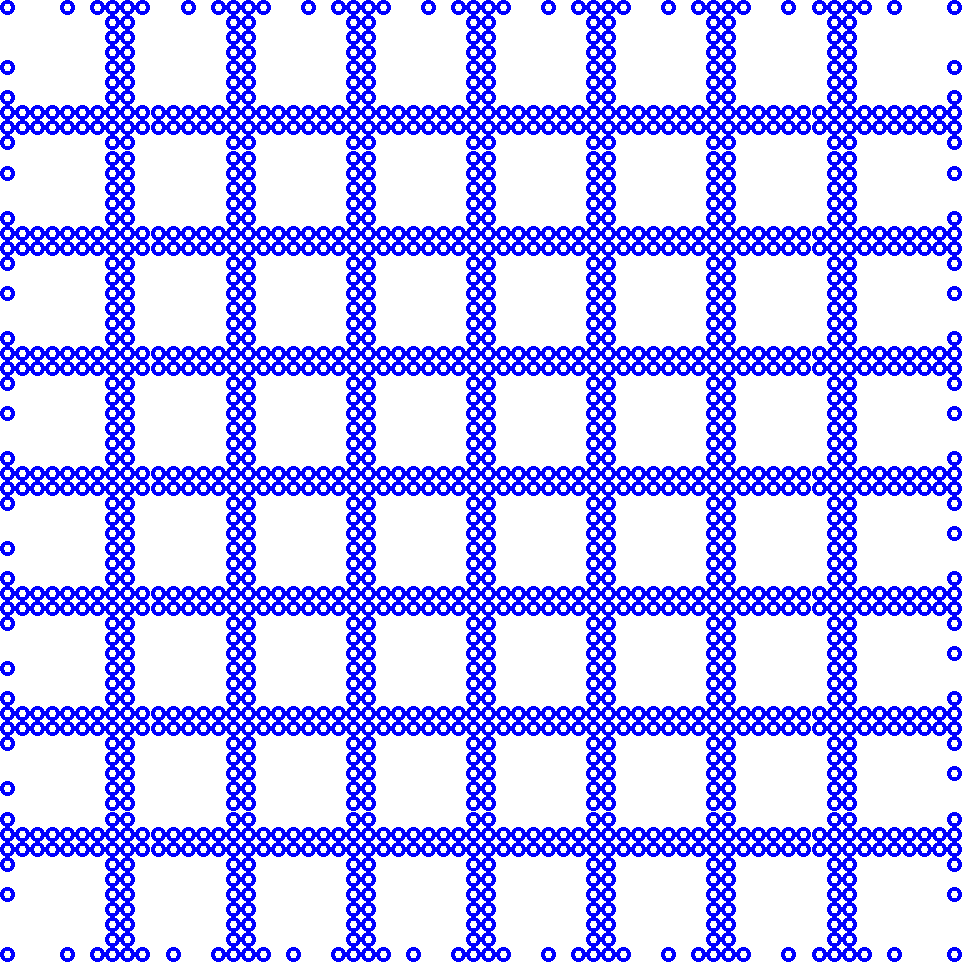}
  \caption*{$\ell = 1$}
 \end{subfigure}
 \quad
 \begin{subfigure}{0.2\textwidth}
  \includegraphics[width=\textwidth]{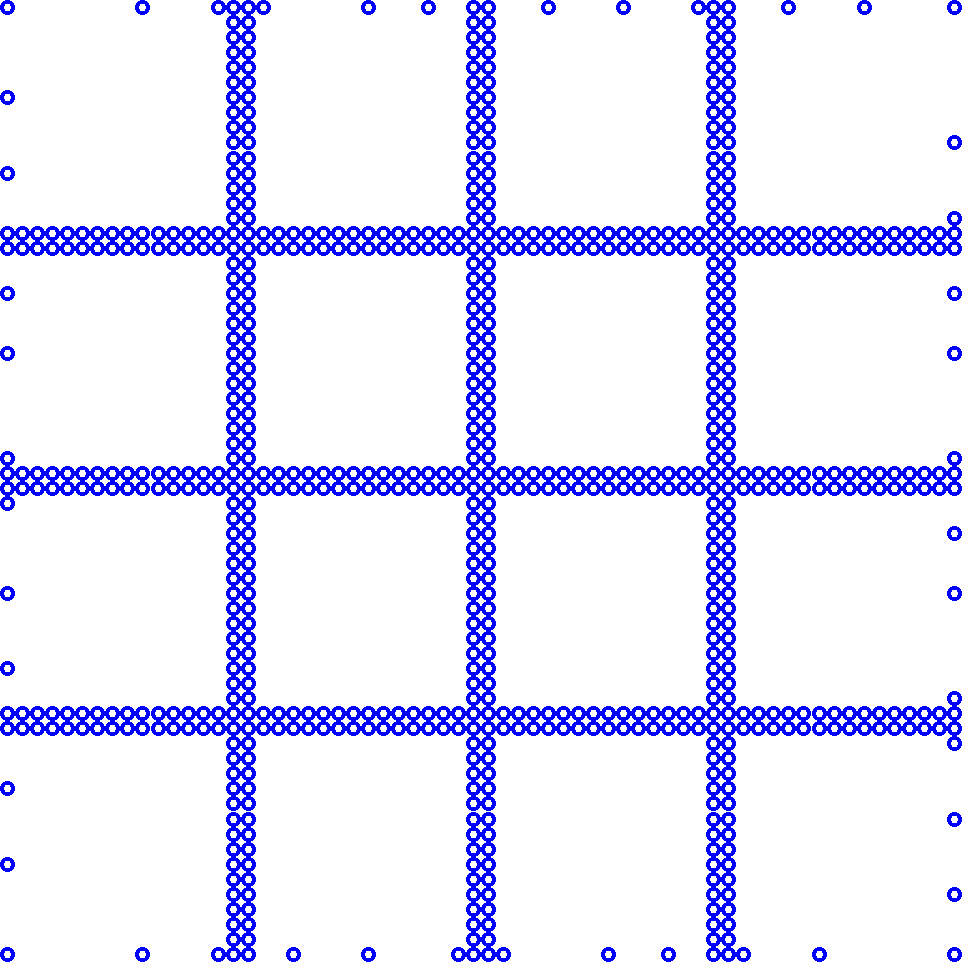}
  \caption*{$\ell = 2$}
 \end{subfigure}
 \quad
 \begin{subfigure}{0.2\textwidth}
  \includegraphics[width=\textwidth]{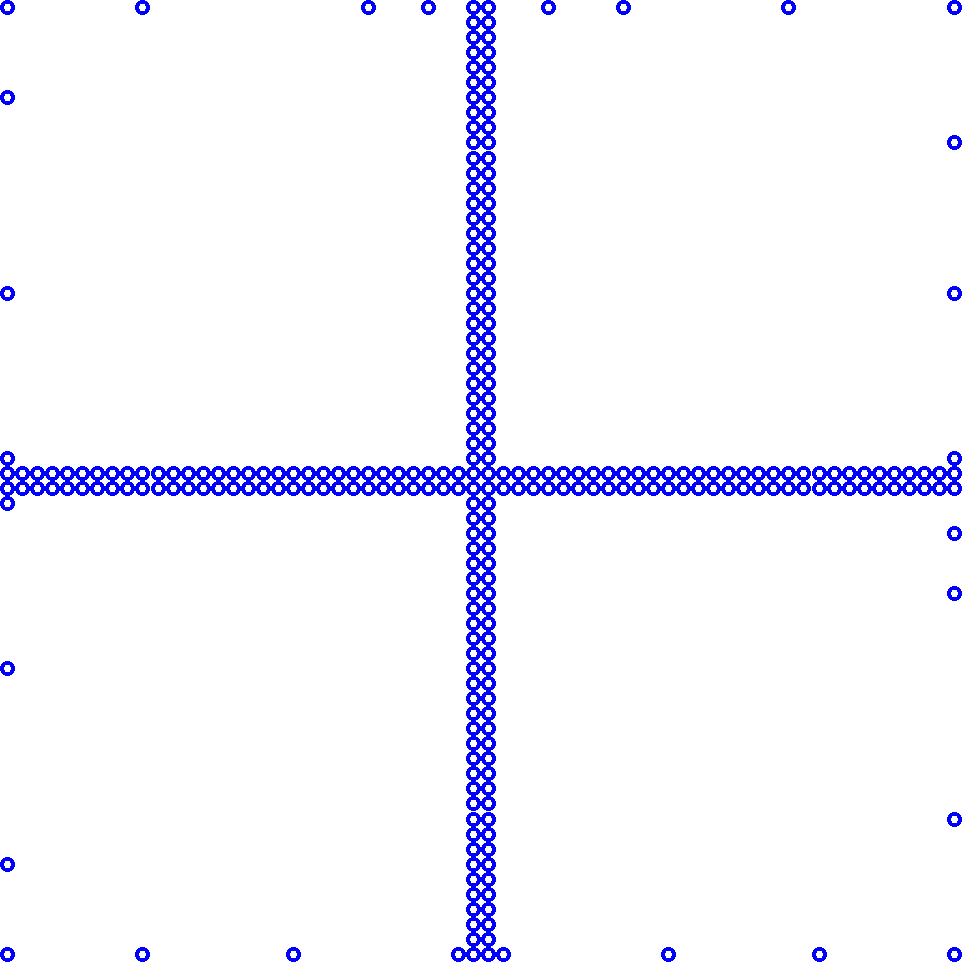}
  \caption*{$\ell = 3$}
 \end{subfigure}
 \caption{Active DOFs at each level $\ell$ of RSF in 2D.}
 \label{fig:rskelf2}
\end{figure}

\subsubsection*{Level $0$}
Defined at this stage are $A_{0}$ and $s_{0}$. Partition $\Omega$ into the Voronoi cells \cite{aurenhammer:1991:acm-comput-surv} $mh(j_{1} - 1, j_{1}) \times mh(j_{2} - 1, j_{2})$ of width $mh = n / 2^{L}$ about the centers $mh(j_{1} - 1/2, j_{2} - 1/2)$ for $1 \leq j_{1}, j_{2} \leq 2^{L}$. Let $C_{0}$ be the collection of index sets corresponding to the active DOFs of each cell. Clearly, $\bigcup_{c \in C_{0}} c = s_{0}$. Then skeletonization with respect to $C_{0}$ gives
\begin{align*}
 A_{1} = \skel_{C_{0}} (A_{0}) \approx U_{0}^{*} A_{0} V_{0}, \qquad U_{0} = \prod_{c \in C_{0}} Q_{c} R_{\rd{c}}, \quad V_{0} = \prod_{c \in C_{0}} Q_{c} S_{\rd{c}},
\end{align*}
where the DOFs $\bigcup_{c \in C_{0}} \rd{c}$ have been eliminated (and marked inactive). Let $s_{1} = s_{0} \setminus \bigcup_{c \in C_{0}} \rd{c} = \bigcup_{c \in C_{0}} \sk{c}$ be the remaining active DOFs. The matrix $A_{1}$ is block diagonal with block partitioning
\begin{align*}
 \theta_{1} = \left( \bigcup_{c \in C_{0}} \{ \rd{c} \} \right) \cup \{ s_{1} \}.
\end{align*}

\subsubsection*{Level $\ell$}
Defined at this stage are $A_{\ell}$ and $s_{\ell}$. Partition $\Omega$ into the Voronoi cells $2^{\ell} mh(j_{1} - 1, j_{1}) \times 2^{\ell} mh(j_{2} - 1, j_{2})$ of width $2^{\ell} mh = n / 2^{L - \ell}$ about the centers $2^{\ell} mh(j_{1} - 1/2, j_{2} - 1/2)$ for $1 \leq j_{1}, j_{2} \leq 2^{L - \ell}$. Let $C_{\ell}$ be the collection of index sets corresponding to the active DOFs of each cell. Clearly, $\bigcup_{c \in C_{\ell}} c = s_{\ell}$. Skeletonization with respect to $C_{\ell}$ then gives
\begin{align*}
 A_{\ell + 1} = \skel_{C_{\ell}} (A_{\ell}) \approx U_{\ell}^{*} A_{\ell} V_{\ell}, \qquad U_{\ell} = \prod_{c \in C_{\ell}} Q_{c} R_{\rd{c}}, \quad V_{\ell} = \prod_{c \in C_{\ell}} Q_{c} S_{\rd{c}},
\end{align*}
where the DOFs $\bigcup_{c \in C_{\ell}} \rd{c}$ have been eliminated. The matrix $A_{\ell + 1}$ is block diagonal with block partitioning
\begin{align*}
 \theta_{\ell + 1} = \left( \bigcup_{c \in C_{0}} \{ \rd{c} \} \right) \cup \cdots \cup \left( \bigcup_{c \in C_{\ell}} \{ \rd{c} \} \right) \cup \{ s_{\ell + 1} \},
\end{align*}
where $s_{\ell + 1} = s_{\ell} \setminus \bigcup_{c \in C_{\ell}} \rd{c} = \bigcup_{c \in C_{\ell}} \sk{c}$.

\subsubsection*{Level $L$}
Finally, we have $A_{L}$ and $s_{L}$, where $D \equiv A_{L}$ is block diagonal with block partitioning
\begin{align*}
 \theta_{L} = \left( \bigcup_{c \in C_{0}} \{ \rd{c} \} \right) \cup \cdots \cup \left( \bigcup_{c \in C_{L - 1}} \{ \rd{c} \} \right) \cup \{ s_{L} \}.
\end{align*}
Combining the approximation over all levels gives
\begin{align*}
 D \approx U_{L - 1}^{*} \cdots U_{0}^{*} A V_{0} \cdots V_{L - 1},
\end{align*}
where each $U_{\ell}$ and $V_{\ell}$ are products of unit triangular matrices, each of which can be inverted simply by negating its off-diagonal entries. Therefore,
\begin{subequations}
 \label{eqn:rskelf}
 \begin{align}
  A &\approx U_{0}^{-*} \cdots U_{L - 1}^{-*} D V_{L - 1}^{-1} \cdots V_{0}^{-1} \equiv F,\\
  A^{-1} &\approx V_{0} \cdots V_{L - 1} D^{-1} U_{L - 1}^{*} \cdots U_{0}^{*} = F^{-1}.
 \end{align}
\end{subequations}
The factorization $F$ permits fast multiplication and can be used as a generalized FMM. Its inverse $F^{-1}$ can be used as a direct solver at high accuracy or as a preconditioner otherwise. If $D$ is stored in factored form, e.g., as an LU decomposition, then the same factorization can readily be used for both tasks. We call \eqref{eqn:rskelf} an (approximate) generalized LU decomposition since while each $U_{\ell}$ and $V_{\ell}$ are composed of triangular factors, they are not themselves triangular, being the product of both upper and lower triangular matrices. We emphasize that $F$ and $F^{-1}$ are not assembled explicitly and are applied only in factored form.

The entire procedure is summarized compactly as Algorithm \ref{alg:rskelf}. In general, we construct the cell partitioning at each level using an adaptive quadtree \cite{samet:1984:acm-comput-surv}, which recursively subdivides the domain until each node contains only $O(1)$ DOFs.
\begin{algorithm}
 \caption{RSF.}
 \label{alg:rskelf}
 \begin{algorithmic}
  \State $A_{0} = A$ \Comment{initialize}
  \For{$\ell = 0, 1, \dots, L - 1$} \Comment{loop from finest to coarsest level}
   \State $A_{\ell + 1} = \skel_{C_{\ell}} (A_{\ell}) \approx U_{\ell}^{*} A_{\ell} V_{\ell}$ \Comment{skeletonize cells}
  \EndFor
  \State $A \approx U_{0}^{-*} \cdots U_{L - 1}^{-*} A_{L} V_{L - 1}^{-1} \cdots V_{0}^{-1}$ \Comment{generalized LU decomposition}
 \end{algorithmic}
\end{algorithm}

\subsection{Three Dimensions}
\label{sec:rskelf:3d}
Consider now the analogous setting in 3D, where $\Omega = (0, 1)^{3}$ is discretized using a uniform $n \times n \times n$ grid with $\Omega_{j} = h(j_{1} - 1, j_{1}) \times h(j_{2} - 1, j_{2}) \times h(j_{3} - 1, j_{3})$ and $x_{j} = h(j_{1} - 1/2, j_{2} - 1/2, j_{3} - 1/2)$ for $j = (j_{1}, j_{2}, j_{3})$. The total number of DOFs is $N = n^{3}$.

The algorithm extends in the natural way with cubic cells $2^{\ell} mh(j_{1} - 1, j_{1}) \times 2^{\ell} mh(j_{2} - 1, j_{2}) \times 2^{\ell} mh(j_{3} - 1, j_{3})$ about the centers $2^{\ell} mh(j_{1} - 1/2, j_{2} - 1/2, j_{3} - 1/2)$ replacing the square cells in 2D at level $\ell$ for $1 \leq j_{1}, j_{2}, j_{3} \leq 2^{L - \ell}$. With this modification, the rest of the algorithm remains unchanged. Figure \ref{fig:rskelf3} shows the active DOFs at each level for a representative example.
\begin{figure}
 \centering
 \begin{subfigure}{0.2\textwidth}
  \includegraphics[width=\textwidth]{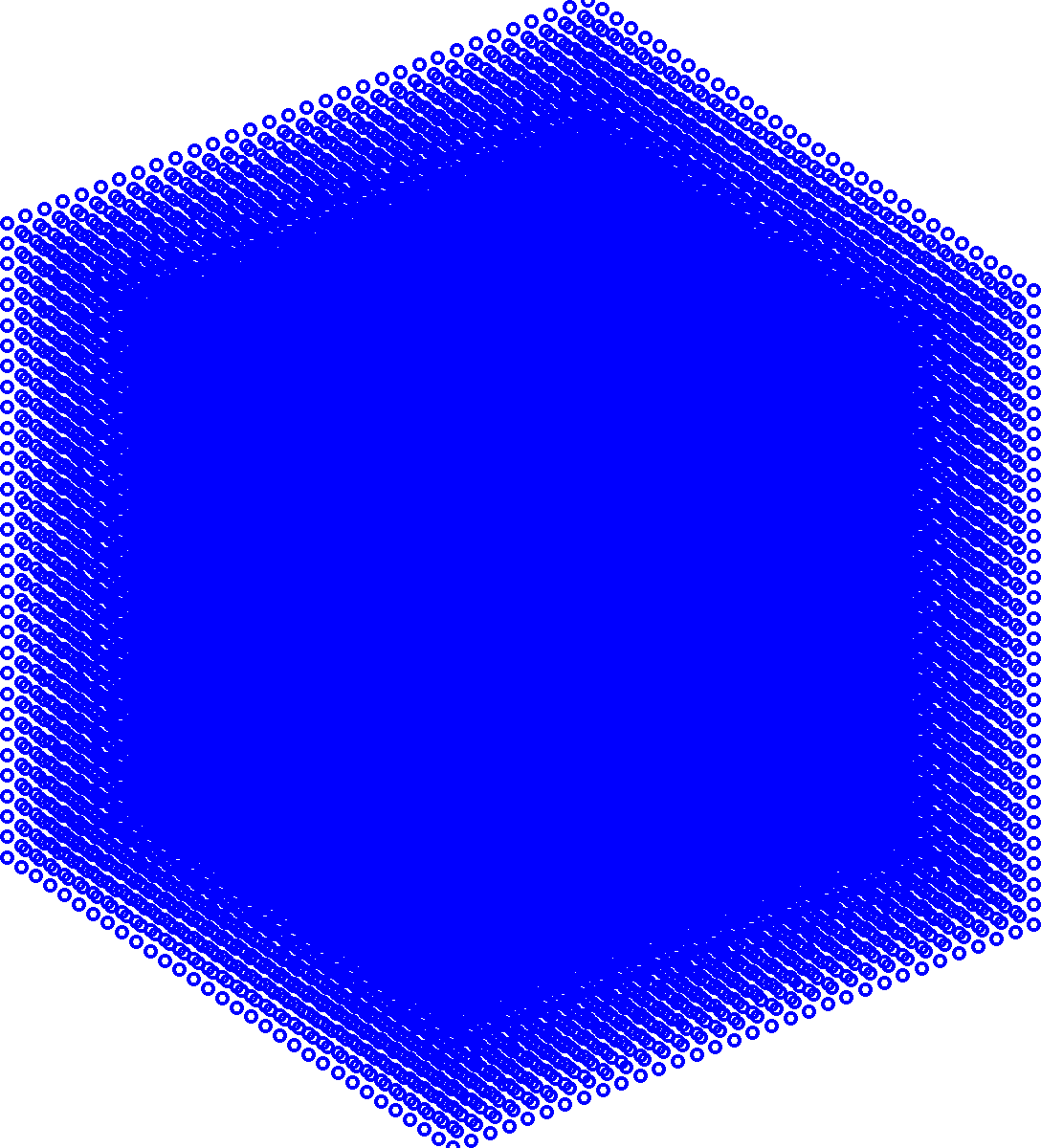}
  \caption*{$\ell = 0$}
 \end{subfigure}
 \quad
 \begin{subfigure}{0.2\textwidth}
  \includegraphics[width=\textwidth]{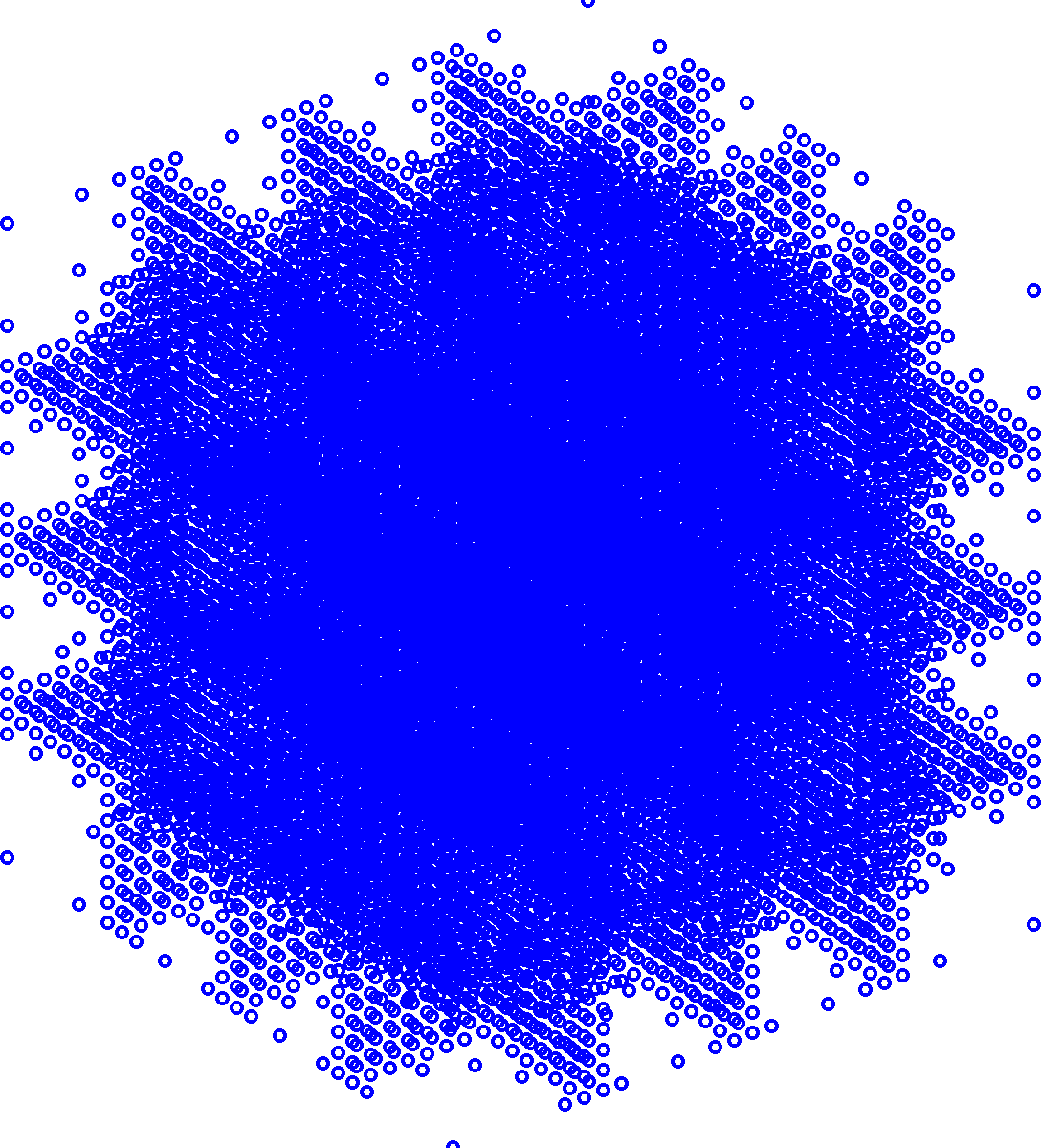}
  \caption*{$\ell = 1$}
 \end{subfigure}
 \quad
 \begin{subfigure}{0.2\textwidth}
  \includegraphics[width=\textwidth]{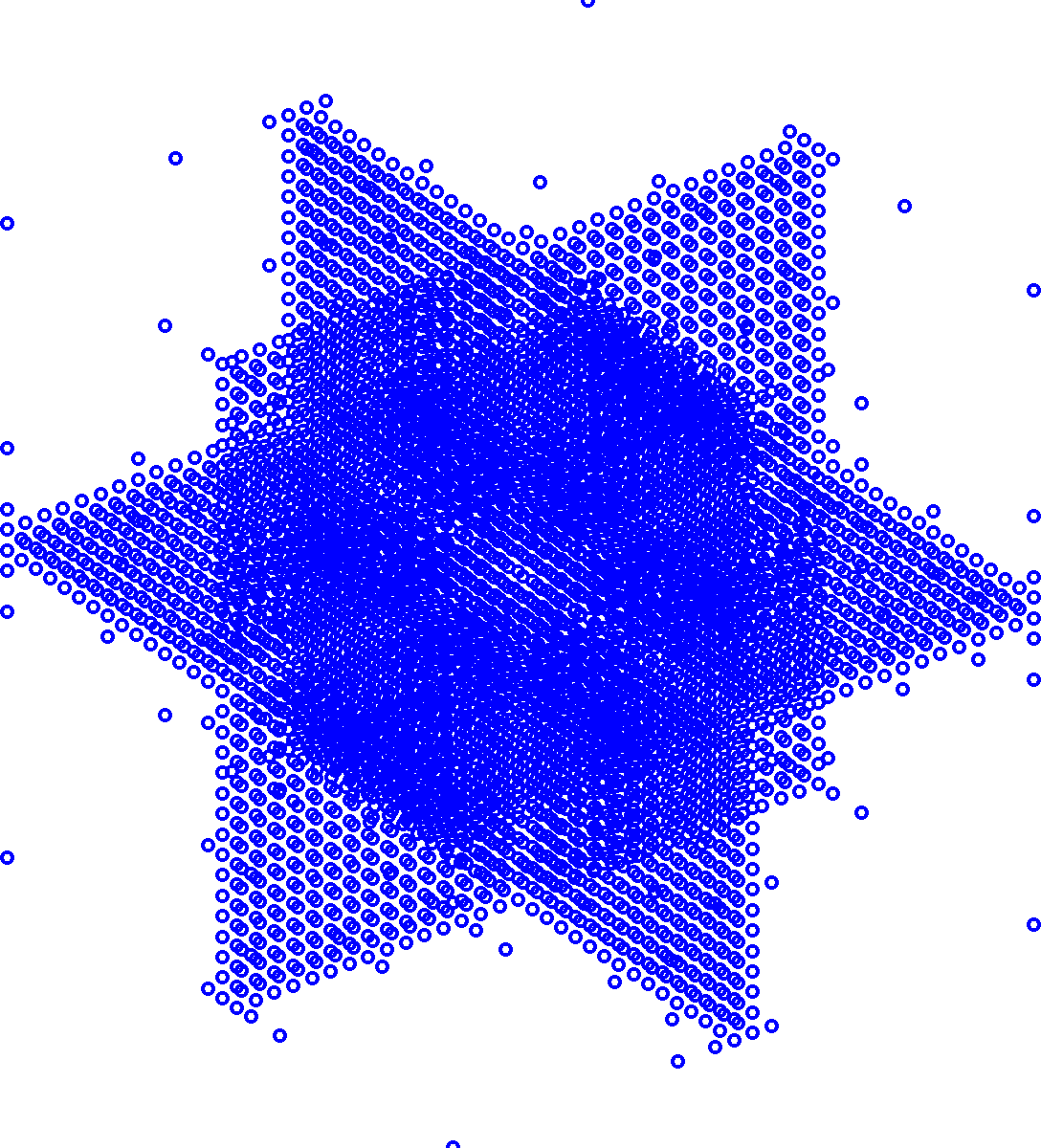}
  \caption*{$\ell = 2$}
 \end{subfigure}
 \caption{Active DOFs at each level $\ell$ of RSF in 3D.}
 \label{fig:rskelf3}
\end{figure}
The output is again a factorization of the form \eqref{eqn:rskelf}. General geometries can be treated using an adaptive octree.

\subsection{Accelerated Compression}
\label{sec:rskelf:accel-comp}
A dominant contribution to the cost of RSF is computing IDs for skeletonization. The basic operation required is the construction of an ID of
\begin{align*}
 W_{\ell,c} =
 \begin{bmatrix}
  (A_{\ell})_{c^{\cmp},c}\\
  (A_{\ell})_{c,c^{\cmp}}^{*}
 \end{bmatrix},
\end{align*}
where $c \in C_{\ell}$ and $c^{\cmp} = s_{\ell} \setminus c$, following Lemma \ref{lem:skel}. We hereafter drop the dependence on $\ell$ for notational convenience. Observe that $W_{c}$ is a tall-and-skinny matrix of size $O(N) \times |c|$, so forming its ID takes at least $O(N|c|)$ work. The total number of index sets $c \in C_{\ell}$ for all $\ell$ is $O(N)$, so considering all $W_{c}$ yields a lower bound of $O(N^{2})$ on the total work and hence on the complexity of RSF.

In principle, it is straightforward to substantially accelerate the algorithm by reconstructing an ID of $W_{c}$ from that of a much smaller matrix $Y_{c}$. All that is needed is that the rows of $Y_{c}$ span those of $W_{c}$, i.e., $\range (W_{c}^{*}) \subseteq \range (Y_{c}^{*})$, where $\range (\cdot)$ denotes the matrix range.

\begin{lemma}
 \label{lem:proxy-id}
 Let $W = XY$ with column indices $q$. If $q = \sk{q} \cup \rd{q}$ and $T_{q}$ are such that $Y_{:,\rd{q}} = Y_{:,\sk{q}} T_{q}$, then
 \begin{align*}
  W_{:,\rd{q}} = X Y_{:,\rd{q}} = X Y_{:,\sk{q}} T_{q} = W_{:,\sk{q}} T_{q}.
 \end{align*}
\end{lemma}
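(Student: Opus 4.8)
The plan is to observe that the statement is essentially immediate once we recall that selecting a subset of columns commutes with multiplication on the left. Concretely, for any matrix product $W = XY$ and any ordered subset $t$ of the column indices of $Y$, the $j$th column of $W$ is $X$ times the $j$th column of $Y$, so $W_{:,t} = X Y_{:,t}$. This is the only structural fact needed, and it follows directly from the definition of matrix--matrix multiplication; no rank or invertibility hypotheses enter.

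First I would apply this identity with $t = \rd{q}$ to write $W_{:,\rd{q}} = X Y_{:,\rd{q}}$. Next I would substitute the hypothesized interpolation relation $Y_{:,\rd{q}} = Y_{:,\sk{q}} T_{q}$ to obtain $W_{:,\rd{q}} = X Y_{:,\sk{q}} T_{q}$. Finally I would invoke the column-commutation identity once more, now in the form $X Y_{:,\sk{q}} = W_{:,\sk{q}}$ (i.e.\ with $t = \sk{q}$), to conclude $W_{:,\rd{q}} = W_{:,\sk{q}} T_{q}$, which is precisely the chain of equalities asserted.

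There is no real obstacle here: the content is purely that the ID of $Y$ transfers verbatim to $W = XY$ because $T_{q}$ acts on the right while $X$ acts on the left, so the two operations do not interfere. The only point worth stating carefully is the commutation of column restriction with left multiplication. One might also add a remark that the condition $\range (W^{*}) \subseteq \range (Y^{*})$ appearing in the surrounding discussion is exactly what guarantees, via Lemma \ref{lem:id} applied to $Y$, that a suitable partition $q = \sk{q} \cup \rd{q}$ and matrix $T_{q}$ exist, though the lemma as stated simply takes them as given. In the approximate setting one would additionally carry the error term, writing $Y_{:,\rd{q}} = Y_{:,\sk{q}} T_{q} + E$ and hence $W_{:,\rd{q}} = W_{:,\sk{q}} T_{q} + X E$, so the reconstruction error for $W$ is bounded by $\| X \|$ times that for $Y$; this quantitative version is what makes reconstructing the ID of $W_{c}$ from a small $Y_{c}$ worthwhile, but it is not required for the exact identity.
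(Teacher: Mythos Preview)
Your proposal is correct and matches the paper's approach: the paper treats the chain of equalities displayed in the lemma as self-evident and offers no separate proof, and your argument simply spells out the one structural fact (that column restriction commutes with left multiplication by $X$) underlying each step. The additional remarks on the approximate setting and the role of the range condition are accurate but go beyond what the paper records.
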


In other words, an ID of $Y_{c}$ gives an ID of $W_{c} = X_{c} Y_{c}$. Note that we make no explicit reference to $X_{c}$; only its existence is assumed. Of course, such a small matrix $Y_{c}$ always exists since $\rank (W_{c}) \leq |c|$; the difficulty lies in finding $Y_{c}$ {\it a priori}.

For elliptic problems, the integral kernel $K(r)$ typically satisfies some form of Green's theorem, in which its values inside a region $\mathcal{D} \in \Omega$ can be recovered from its values on the boundary $\Gamma = \partial \mathcal{D}$. Consider, for example, the Laplace kernel \eqref{eqn:laplace-kernel} and let $\varphi (x) = G(\| x - x_{0} \|)$ be the harmonic field in $\mathcal{D}$ due to an exterior source $x_{0} \in \Omega \setminus \bar{\mathcal{D}}$. Then
\begin{align*}
 \varphi (x) = \int_{\Gamma} \left[ \varphi (y) \frac{\partial G}{\partial \nu_{y}} (\| x - y \|) - \frac{\partial \varphi}{\partial \nu_{y}} (y) G(\| x - y \|) \right] d \Gamma (y), \quad x \in \mathcal{D},
\end{align*}
i.e., the ``incoming'' field $\varphi (x)$ lives in the span of single- and double-layer interactions with $\Gamma$. In practice, we will use this fact only when $x \in \mathcal{D}$ is sufficiently separated from $\Gamma$ (see below), in which case the double-layer term can often even be omitted since the corresponding discrete spaces are equal to high precision. Outgoing interactions can essentially be treated in the same way using the ``transpose'' of this idea.

In such cases, a suitable $Y_{c}$ can readily be constructed. To see this, let $B$ denote the cell containing the DOFs $c$ and draw a local ``proxy'' surface $\Gamma$ around $B$ (Figure \ref{fig:proxy-comp}).
\begin{figure}
 \includegraphics{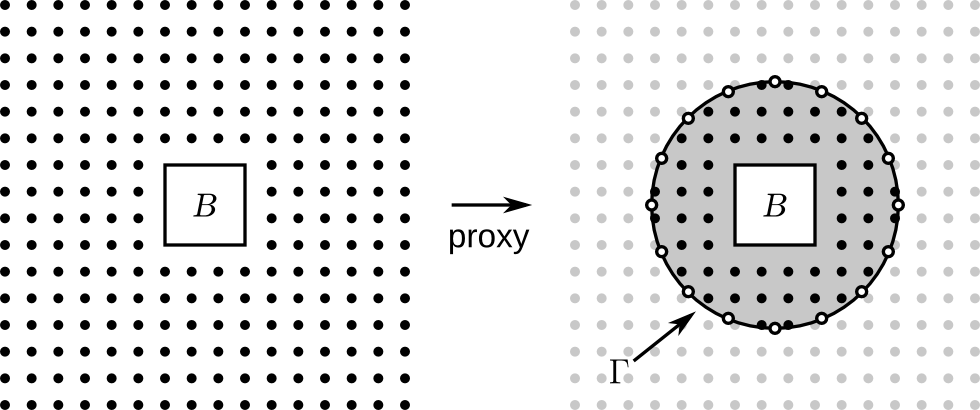}
 \caption{Accelerated compression using equivalent interactions. By Green's theorem, all off-diagonal interactions with a given box $B$ can be represented by its interactions with an artificial local proxy surface $\Gamma$ and with all DOFs interior to $\Gamma$.}
 \label{fig:proxy-comp}
\end{figure}
This partitions $c^{\cmp}$ as $c^{\cmp} = c^{\nbr} \cup c^{\far}$, where $c^{\nbr}$ consists of all DOFs interior to $\Gamma$ (the near field) and $c^{\far}$ consists of the rest (the far field). By Green's theorem, the interactions involving $c^{\far}$ can be represented by artificial ``equivalent'' interactions with $\Gamma$. Therefore, discretizing $\Gamma$ with equivalent DOFs $c^{\eqv}$, we assert that:

\begin{lemma}
 \label{lem:proxy-spec}
 Consider \eqref{eqn:ie} with $b(x) \equiv c(x) \equiv 1$ and let all quantities be as defined in the preceding discussion. Then, up to discretization error (see \cite{martinsson:2006:commun-appl-math-comput-sci}), $\range (A_{c^{\far},c}^{*}) \subseteq \range (Y_{c^{\eqv},c}^{*})$, where $(Y_{c^{\eqv},c})_{ij} = K(\| x^{\eqv}_{i} - x_{j} \|)$ for $\{ x_{j} \}$ and $\{ x^{\eqv}_{j} \}$ the points identified with the DOFs $c$ and $c^{\eqv}$, respectively.
\end{lemma}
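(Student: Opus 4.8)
The plan is to transport the continuous Green's representation formula onto the discrete kernel matrices. First I would rephrase the claim as a statement about column spans: the columns of $A_{c^{\far},c}^{*}$ are the vectors $(\overline{A_{ij}})_{j \in c}$ for $i \in c^{\far}$, and the columns of $Y_{c^{\eqv},c}^{*}$ are the vectors $(\overline{K(\| x_{k}^{\eqv} - x_{j} \|)})_{j \in c}$ for $k \in c^{\eqv}$, so it suffices to show that for each fixed far-field index $i \in c^{\far}$ the vector $(A_{ij})_{j \in c}$ lies, up to discretization error, in $\operatorname{span} \{ (K(\| x_{k}^{\eqv} - x_{j} \|))_{j \in c} : k \in c^{\eqv} \}$; conjugating and letting $i$ range over $c^{\far}$ then gives the asserted inclusion. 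Under the hypothesis $b \equiv c \equiv 1$ the local (diagonal) term in \eqref{eqn:ie} does not contribute off the diagonal, and since $c^{\far} \cap c = \varnothing$ we have $A_{ij} = K_{ij} = \int_{\Omega_{j}} K(\| x_{i} - y \|) \, d\Omega(y)$ for the indices at hand; it is precisely the absence of a $j$-dependent factor $c_{j}$ that keeps this a clean span statement.

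Next I would fix $i \in c^{\far}$ and introduce the field $\psi_{i}(y) = K(\| x_{i} - y \|)$. Since $x_{i}$ lies outside the proxy surface $\Gamma$ while $\bigcup_{j \in c} \Omega_{j} \subseteq B$ sits, by construction with a safety buffer, well inside the region $\mathcal{D}$ enclosed by $\Gamma$, the field $\psi_{i}$ satisfies the homogeneous elliptic PDE associated with $K$ throughout a neighborhood of $B$ (for the Laplace kernel \eqref{eqn:laplace-kernel}, $\psi_{i}$ is harmonic there). Green's representation formula on $\mathcal{D}$ -- the identity for $\varphi$ displayed in the discussion above -- then expresses $\psi_{i}|_{\mathcal{D}}$ as a superposition of single- and double-layer potentials over $\Gamma$ with densities determined by $\psi_{i}|_{\Gamma}$ and $\partial_{\nu} \psi_{i}|_{\Gamma}$; because $B$ is separated from $\Gamma$, the single-layer part alone reproduces $\psi_{i}|_{B}$ to the prescribed precision -- this is the point covered by \cite{martinsson:2006:commun-appl-math-comput-sci} -- so there is a density $\mu_{i}$ on $\Gamma$ with $\psi_{i}(y) \approx \int_{\Gamma} G(\| y - z \|) \mu_{i}(z) \, dS(z)$ for $y \in B$. (If one prefers not to discard the double layer, one keeps both and enlarges $c^{\eqv}$ to carry dipole equivalent DOFs; the rest of the argument is unchanged.)

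Integrating over $\Omega_{j}$ and discretizing the $\Gamma$-integral with the equivalent points $\{ x_{k}^{\eqv} \}_{k \in c^{\eqv}}$ and quadrature weights $\{ w_{k} \}$ then gives $K_{ij} \approx \sum_{k \in c^{\eqv}} w_{k} \mu_{i}(x_{k}^{\eqv}) \int_{\Omega_{j}} G(\| y - x_{k}^{\eqv} \|) \, d\Omega(y)$; and since each $x_{k}^{\eqv} \in \Gamma$ is well separated from $\Omega_{j} \subseteq B$, the integrand is smooth there, so the piecewise-constant collocation rule underlying $A$ yields $\int_{\Omega_{j}} G(\| y - x_{k}^{\eqv} \|) \, d\Omega(y) = |\Omega_{j}| \, K(\| x_{k}^{\eqv} - x_{j} \|) (1 + O(h^{2}))$. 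On the uniform grid $|\Omega_{j}| = h^{d}$ is independent of $j$, whence
\begin{align*}
 K_{ij} \approx \sum_{k \in c^{\eqv}} \gamma_{k}^{(i)} \, K(\| x_{k}^{\eqv} - x_{j} \|), \qquad \gamma_{k}^{(i)} := h^{d} w_{k} \mu_{i}(x_{k}^{\eqv}),
\end{align*}
with coefficients $\gamma_{k}^{(i)}$ that do not depend on $j$. Thus $(K_{ij})_{j \in c}$ is, up to discretization error, a fixed linear combination of the rows of $Y_{c^{\eqv},c}$; conjugating and letting $i$ range over $c^{\far}$ gives $\range(A_{c^{\far},c}^{*}) \subseteq \range(Y_{c^{\eqv},c}^{*})$ up to discretization error. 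For a non-uniform discretization, where $|\Omega_{j}|$ varies with $j$, one instead defines $Y_{c^{\eqv},c}$ using the same quadrature rule as $A$ so that the $|\Omega_{j}|$ are absorbed consistently, and the argument goes through verbatim.

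The main obstacle is not the bookkeeping above but the quantitative control of the three approximations it hides -- dropping the double layer on a well-separated $\Gamma$, quadrature on $\Gamma$, and the collocation error on each $\Omega_{j}$ -- against the target tolerance; this is exactly why the lemma is stated only ``up to discretization error,'' with the first and most delicate estimate deferred to \cite{martinsson:2006:commun-appl-math-comput-sci}. The only additional care is conceptual: one must consistently identify the discrete single-layer space (the span of columns of the kernel matrix whose source points lie on $\Gamma$) with the continuous span it approximates, and note that the off-diagonal entries of $A_{\ell}$ invoked here are genuinely those of the original $A$, since skeletonization leaves all interactions with $c^{\cmp}$ untouched.
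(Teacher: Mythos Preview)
Your argument is correct and rests on the same idea as the paper's---Green's representation on the region bounded by the proxy surface---but the emphasis is inverted. The paper's proof is two sentences: it takes the passage from Green's theorem to the discrete range inclusion as immediate and spends its effort on the one point you relegate to a closing remark, namely that $(A_{\ell})_{c^{\far},c}$ consists purely of original kernel entries because the hierarchical (nested) cell structure confines all Schur complement interactions from prior levels to the diagonal block $A_{cc}$. You instead unpack the continuous-to-discrete translation carefully (representation formula, quadrature on $\Gamma$, collocation on $\Omega_{j}$, $j$-independence of the coefficients), which the paper simply absorbs into the phrase ``up to discretization error'' and the citation. Both routes are valid; yours supplies the analytic detail the paper omits, while the paper foregrounds the structural observation about SCIs that makes the lemma applicable inside the recursion.
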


\begin{proof}
 This immediately follows from Green's theorem upon recognizing that $A_{c^{\far},c}$ contains interactions involving only the original kernel function $K(r)$. This must be checked because $A_{:,c}$ may have Schur complement interactions (SCIs), i.e., those corresponding to the matrix $B_{\sk{p} \sk{p}}$ in \eqref{eqn:skel}, accumulated from skeletonization at previous levels, over which we do not have analytic control. However, due to the hierarchical nature of the domain partitioning, any such SCIs must be restricted to the diagonal block $A_{cc}$. Thus, Green's theorem applies.
\end{proof}

\begin{lemma}
 Consider \eqref{eqn:ie} with general $b(x)$ and $c(x)$. Then, up to discretization error, $\range (A_{c^{\far},c}^{*}) \subseteq \range (Y_{c^{\eqv},c}^{*})$ and $\range (A_{c,c^{\far}}) \subseteq \range (Y_{c,c^{\eqv}})$, where
 \begin{align*}
  (Y_{c^{\eqv},c})_{ij} = K(\| x^{\eqv}_{i} - x_{j} \|) \, c(x_{j}), \quad (Y_{c,c^{\eqv}})_{ij} = b(x_{i}) \, K(\| x_{i} - x^{\eqv}_{j} \|).
 \end{align*}
\end{lemma}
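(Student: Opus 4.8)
The plan is to reduce the general case to Lemma~\ref{lem:proxy-spec}, observing that $b(x)$ and $c(x)$ enter the far-field blocks only as left and right diagonal factors, and that two elementary facts about matrix ranges then finish the argument: right multiplication by a matrix cannot enlarge a range, $\range(MN) \subseteq \range(M)$; and left multiplication preserves range inclusions, $\range(M_1) \subseteq \range(M_2) \Rightarrow \range(DM_1) = D\range(M_1) \subseteq D\range(M_2) = \range(DM_2)$.

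First I would record the structure of the relevant blocks. Let $K$ be the matrix with entries $K_{ij}$ from \eqref{eqn:kernel-integral} (or, up to discretization error as in Lemma~\ref{lem:proxy-spec}, the point-evaluation kernel matrix), so that the discretized operator is $A = \diag(a) + \diag(b)\, K\, \diag(c)$. Exactly as in the proof of Lemma~\ref{lem:proxy-spec}, any Schur-complement interactions accumulated by skeletonization at earlier levels stay inside the diagonal block $A_{cc}$, and $\diag(a)$ contributes nothing off the diagonal; hence, since $c$ and $c^{\far}$ are disjoint,
\begin{align*}
 A_{c^{\far},c} = \diag(b)|_{c^{\far}}\, K_{c^{\far},c}\, \diag(c)|_{c}, \qquad
 A_{c,c^{\far}} = \diag(b)|_{c}\, K_{c,c^{\far}}\, \diag(c)|_{c^{\far}},
\end{align*}
where $\diag(\cdot)|_p$ is the diagonal matrix of coefficient values at the points indexed by $p$. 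By the definitions in the statement, $Y_{c^{\eqv},c} = K_{c^{\eqv},c}\, \diag(c)|_{c}$ and $Y_{c,c^{\eqv}} = \diag(b)|_{c}\, K_{c,c^{\eqv}}$.

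Next I would invoke Lemma~\ref{lem:proxy-spec} applied to the bare kernel (the case $b \equiv c \equiv 1$), which gives $\range(K_{c^{\far},c}^*) \subseteq \range(K_{c^{\eqv},c}^*)$, together with its transpose $\range(K_{c,c^{\far}}) \subseteq \range(K_{c,c^{\eqv}})$, which follows either from the ``transpose'' of the Green's identity underlying Lemma~\ref{lem:proxy-spec} or from Lemma~\ref{lem:proxy-spec} itself via the symmetry $K(\|x-y\|) = K(\|y-x\|)$. For the first assertion, conjugate-transpose the expression for $A_{c^{\far},c}$ and chase ranges:
\begin{align*}
 \range(A_{c^{\far},c}^*)
 &= (\diag(c)|_{c})^* \range\bigl(K_{c^{\far},c}^*\, (\diag(b)|_{c^{\far}})^*\bigr)
 \subseteq (\diag(c)|_{c})^* \range(K_{c^{\far},c}^*)\\
 &\subseteq (\diag(c)|_{c})^* \range(K_{c^{\eqv},c}^*) = \range(Y_{c^{\eqv},c}^*),
\end{align*}
where the first inclusion is the right-multiplication fact and the second is the left-multiplication fact. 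The second assertion is obtained in the same way:
\begin{align*}
 \range(A_{c,c^{\far}})
 &= \diag(b)|_{c}\, \range\bigl(K_{c,c^{\far}}\, \diag(c)|_{c^{\far}}\bigr)
 \subseteq \diag(b)|_{c}\, \range(K_{c,c^{\far}})\\
 &\subseteq \diag(b)|_{c}\, \range(K_{c,c^{\eqv}}) = \range(Y_{c,c^{\eqv}}).
\end{align*}

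Since Lemma~\ref{lem:proxy-spec} supplies all of the analytic content, this extension has essentially no obstacle; the one point that still needs care is inherited directly from that lemma's proof, namely the verification that the far-field blocks of $A_\ell$ at every level consist purely of $b$- and $c$-scaled kernel entries with no stray Schur-complement contributions --- which holds because skeletonization leaves all interactions with $c^{\cmp}$ unchanged (Lemma~\ref{lem:skel}) and the nested domain partitioning confines any newly generated interactions to the active cell.
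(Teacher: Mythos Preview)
Your proposal is correct and follows essentially the same approach as the paper: both peel off $b$ and $c$ as diagonal multipliers to reduce to Lemma~\ref{lem:proxy-spec}, and both handle the second inclusion via the kernel symmetry $K(\|x-y\|)=K(\|y-x\|)$. The only cosmetic difference is that the paper phrases the reduction as an explicit factorization $A_{c^{\far},c} = (B_{c^{\far}}\tilde{X})(\tilde{Y}C_{c}) = X_{c^{\eqv},c}Y_{c^{\eqv},c}$, whereas you argue directly with range inclusions; these are equivalent formulations of the same step.
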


\begin{proof}
 \label{lem:proxy-gen}
 The functions $b(x)$ and $c(x)$ act as diagonal multipliers, so $A_{c^{\far},c} = B_{c^{\far}} \tilde{A}_{c^{\far},c} C_{c}$, where $\tilde{A}_{c^{\far},c}$ is the corresponding interaction matrix with $b(x) \equiv c(x) \equiv 1$ (i.e., that in Lemma \ref{lem:proxy-spec}), and $B_{c^{\far}} = \diag (b(x^{\far}_{i}))$ and $C_{c} = \diag (c(x_{i}))$ for $\{ x^{\far}_{j} \}$ the points attached to $c^{\far}$. By Lemma \ref{lem:proxy-spec}, $\tilde{A}_{c^{\far},c} = \tilde{X}_{c^{\eqv},c} \tilde{Y}_{c^{\eqv},c}$ for some $\tilde{X}_{c^{\eqv},c}$, so
 \begin{align*}
  A_{c^{\far},c} = B_{c^{\far}} \tilde{X}_{c^{\eqv},c} \tilde{Y}_{c^{\eqv},c} C_{c} = \left( B_{c^{\far}} \tilde{X}_{c^{\eqv},c} \right) \left( \tilde{Y}_{c^{\eqv},c} C_{c} \right) \equiv X_{c^{\eqv},c} Y_{c^{\eqv},c}.
 \end{align*}
 A similar argument with $A_{c,c^{\far}} = B_{c} \tilde{A}_{c,c^{\far}} C_{c^{\far}}$ analogously defined and
 \begin{align*}
  \tilde{A}_{c,c^{\far}} = \tilde{A}_{c^{\far},c}^{\trans} = \tilde{Y}_{c^{\eqv},c}^{\trans} \tilde{X}_{c^{\eqv},c}^{\trans} \equiv \tilde{Y}_{c,c^{\eqv}} \tilde{X}_{c,c^{\eqv}}
 \end{align*}
 proves that $A_{c,c^{\far}} = Y_{c,c^{\eqv}} X_{c,c^{\eqv}}$ for some $X_{c,c^{\eqv}}$.
\end{proof}

If $\Gamma$ is separated from $B$, for example as in Figure \ref{fig:proxy-comp}, then standard multipole estimates \cite{greengard:1987:j-comput-phys,greengard:1997:acta-numer} show that we only need $|c^{\eqv}| = O(\log^{d - 1} (1 / \epsilon))$ to satisfy Green's theorem to any precision $\epsilon$. In particular, for fixed $\epsilon$, we can choose $|c^{\eqv}|$ to be constant. Therefore, Lemma \ref{lem:proxy-gen} gives
\begin{align}
 W_{c} \approx X_{c} Y_{c} \equiv X_{c}
 \begin{bmatrix}
  A_{c^{\nbr},c}\\
  A_{c,c^{\nbr}}^{*}\\
  Y_{c^{\eqv},c}\\
  Y_{c,c^{\eqv}}^{*}
 \end{bmatrix}
 \label{eqn:proxy-id}
\end{align}
for some $X_{c}$, where $Y_{c}$ has size $O(|c^{\nbr}| + 1) \times |c|$ with $|c^{\nbr}| = O(|c|)$ typically. Lemma \ref{lem:proxy-id} then reduces the global compression of $W_{c}$ to the local compression of $Y_{c}$. This so-called proxy trick has also been employed by \cite{cheng:2005:siam-j-sci-comput,corona:2015:appl-comput-harmon-anal,gillman:2012:front-math-china,greengard:2009:acta-numer,ho:2012:siam-j-sci-comput,martinsson:2005:j-comput-phys,martinsson:2007:siam-j-sci-comput,pan:2012:radio-sci,ying:2004:j-comput-phys} and is crucial for reducing the asymptotic complexity. For numerical stability, we include the quadrature weights for the integral \eqref{eqn:kernel-integral} in $Y_{c^{\eqv},c}$ and $Y_{c,c^{\eqv}}$ so that the various components of $Y_{c}$ are all of the same order.

In this paper, for a cell $B$ with scaled width $1$ centered at the origin, we take as $\Gamma$ the circle of radius $3/2$ in 2D, uniformly discretized with $64$ points, and the sphere of radius $3/2$ in 3D, uniformly sampled (by projecting Gaussian random vectors) with $512$ points. These values of $|c^{\eqv}|$ have been experimentally validated to reproduce interactions via the Laplace kernel \eqref{eqn:laplace-kernel} with $\epsilon \sim 10^{-15}$. This approach is more efficient than the ``supercell'' proxy of \cite{greengard:2009:acta-numer,ho:2012:siam-j-sci-comput} by factors of $4 / \pi = 1.2732...$ in 2D and $6 / \pi = 1.9099...$ in 3D (volume ratio of the cube to the sphere of equal diameter), which takes as $\Gamma$ the outer boundary of the $3 \times 3$ ($\times 3$) cell block centered at $B$.

\subsection{Complexity Estimates}
We now investigate the computational complexity of RSF. For this, we need to estimate the skeleton size $|\sk{c}|$ for a typical index set $c \in C_{\ell}$ at level $\ell$. Denote this quantity by $k_{\ell}$ and let $n_{\ell} = (2^{\ell} m)^{d} = O(2^{d \ell})$ be the number of DOFs (both active and inactive) in each cell. From Figures \ref{fig:rskelf2} and \ref{fig:rskelf3}, it is clear that skeletons tend to cluster around cell interfaces, which can again be justified by Green's theorem, so $k_{\ell} = O(n_{\ell}^{1/2}) = O(2^{\ell})$ in 2D and $k_{\ell} = O(n_{\ell}^{2/3}) = O(2^{2 \ell})$ in 3D. Indeed, this can be verified using standard multipole estimates by noting that $k_{\ell}$ is on the order of the interaction rank between two adjacent cells at level $\ell$, which can be analyzed via recursive subdivision to expose well-separated structures (Figure \ref{fig:recur-subdiv}).
\begin{figure}
 \includegraphics{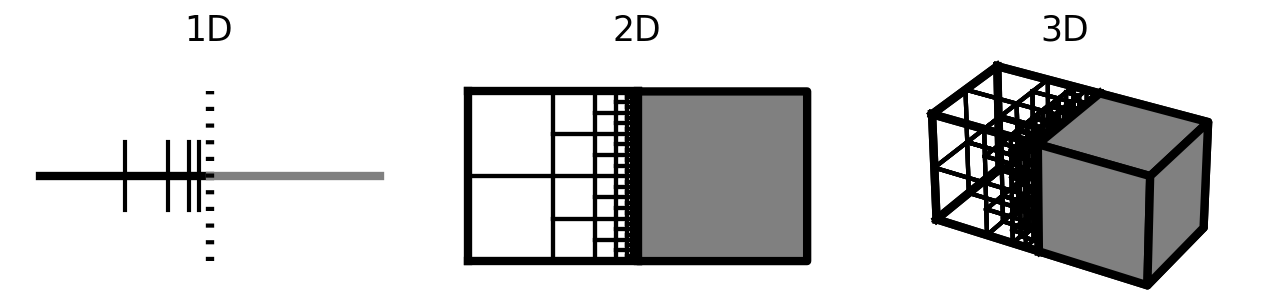}
 \caption{Recursive subdivision of source domain (white) into well-separated subdomains from the target (gray), each of which has constant interaction rank.}
 \label{fig:recur-subdiv}
\end{figure}
This yields the more detailed result
\begin{align}
 k_{\ell} =
 \begin{cases}
  O(\ell), & d = 1\\
  O(2^{(d - 1) \ell}), & d \geq 2,
 \end{cases}
 \label{eqn:inter-rank}
\end{align}
which, in fact, holds for $d$ equal to the intrinsic dimension rather than the ambient dimension.

\begin{theorem}[\cite{ho:2012:siam-j-sci-comput,martinsson:2005:j-comput-phys}]
 \label{thm:rskelf}
 Assume that \eqref{eqn:inter-rank} holds. Then the cost of constructing the factorization $F$ in \eqref{eqn:rskelf} using RSF with accelerated compression is
 \begin{align}
  t_{f} = O(2^{dL} m^{3d}) + \sum_{\ell = 0}^{L} 2^{d(L - \ell)} O(k_{\ell}^{3}) =
  \begin{cases}
   O(N), & d = 1\\
   O(N^{3(1 - 1/d)}), & d \geq 2,
  \end{cases}
  \label{eqn:rskelf-t_f}
 \end{align}
 while that of applying $F$ or $F^{-1}$ is
 \begin{align}
  t_{a/s} = O(2^{dL} m^{2d}) + \sum_{\ell = 0}^{L} 2^{d(L - \ell)} O(k_{\ell}^{2}) =
  \begin{cases}
   O(N), & d = 1\\
   O(N \log N), & d = 2\\
   O(N^{2(1 - 1/d)}), & d \geq 3.
  \end{cases}
  \label{eqn:rskelf-t_as}
 \end{align}
\end{theorem}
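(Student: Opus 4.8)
\section*{Proof proposal}

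The plan is to bound the work one level at a time and then sum a geometric series, converting powers of $2^{L}$ into powers of $N$ at the end via $N = (2^{L} m)^{d}$, so that $2^{L} = \Theta(N^{1/d})$ since $m = O(1)$.

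First I would fix the size of the matrices touched at level $\ell$. There are $2^{d(L-\ell)}$ cells, and a cell at level $\ell \geq 1$ is a union of $2^{d}$ cells at level $\ell - 1$, so its active DOFs are the $2^{d}$ skeleton sets inherited from those subcells, numbering $2^{d} k_{\ell-1} = O(2^{(d-1)\ell}) = O(k_{\ell})$; a level-$0$ cell carries $m^{d} = O(1)$ DOFs. Next I would check that skeletonization leaves only $|\sk{c}| = O(k_{\ell})$ per cell: using the proxy splitting $c^{\cmp} = c^{\nbr} \cup c^{\far}$ of Section \ref{sec:rskelf:accel-comp}, the block $A_{c,c^{\cmp}}$ (and its transpose) factors through the $O(1)$ neighbor cells together with an equivalent surface carrying $|c^{\eqv}| = O(1)$ DOFs, so by \eqref{eqn:inter-rank} its rank is a bounded sum of adjacent-cell interaction ranks, i.e.\ $O(k_{\ell})$ --- this is the precise sense in which skeletons cluster on interfaces. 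Hence both factors $X_{c}, Y_{c}$ in \eqref{eqn:proxy-id} have dimension $O(k_{\ell})$, so by Lemma \ref{lem:proxy-id} the ID of $W_{c}$ is read off from the ID of an $O(k_{\ell}) \times O(k_{\ell})$ matrix in $O(k_{\ell}^{3})$ work, and the subsequent elimination of $\rd{c}$ via Lemma \ref{lem:skel} (forming $B_{\rd{p}\rd{p}}$, $B_{\rd{p}\sk{p}}$, $B_{\sk{p}\rd{p}}$ and the Schur complement $B_{\sk{p}\sk{p}}$) involves only $O(k_{\ell})$-sized matrices, again $O(k_{\ell}^{3})$; extracting any accumulated Schur-complement interactions costs $O(k_{\ell}^{2})$ because, as in the proof of Lemma \ref{lem:proxy-spec}, they sit inside $A_{cc}$. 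Summing over cells gives construction cost $O(2^{d(L-\ell)} k_{\ell}^{3})$ at level $\ell$, with the final block $D|_{s_{L}}$, of size $|s_{L}| = 2^{d} k_{L-1} = O(k_{L})$, supplying the $\ell = L$ term, plus an $O(2^{dL} m^{3d}) = O(N)$ contribution from the $O(1)$-sized finest cells; this is the form of $t_{f}$ asserted. For $t_{a/s}$ the same accounting gives $O(k_{\ell}^{2})$ per cell: each $U_{\ell}, V_{\ell}$ and their inverses are products over disjoint cells of unit-triangular factors built from the $O(k_{\ell})$-sized blocks in the elimination operators $R_{\rd{c}}, S_{\rd{c}}$ and the matrices $Q_{c}$ of Lemmas \ref{lem:sparse-elim} and \ref{lem:skel}, and applying $D^{\pm 1}$ in factored form costs the sum of squared block sizes; this yields $t_{a/s} = O(2^{dL} m^{2d}) + \sum_{\ell} 2^{d(L-\ell)} O(k_{\ell}^{2})$.

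It then remains to evaluate these sums with \eqref{eqn:inter-rank} and $L = \Theta(\log N)$. For $d = 1$, $k_{\ell} = O(\ell)$ gives $\sum_{\ell=0}^{L} 2^{L-\ell} \ell^{p} = 2^{L} \sum_{\ell} \ell^{p} 2^{-\ell} = O(2^{L}) = O(N)$ for $p \in \{2,3\}$, since the series converges. For $d \geq 2$, $k_{\ell} = O(2^{(d-1)\ell})$ gives $\sum_{\ell=0}^{L} 2^{d(L-\ell)} 2^{3(d-1)\ell} = 2^{dL} \sum_{\ell=0}^{L} 2^{(2d-3)\ell}$ for $t_{f}$ and $2^{dL} \sum_{\ell=0}^{L} 2^{(d-2)\ell}$ for $t_{a/s}$. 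The exponent $2d - 3 \geq 1$ is positive, so the first sum is dominated by $\ell = L$ and gives $O(2^{(3d-3)L}) = O(N^{3(1-1/d)})$; the exponent $d - 2$ vanishes when $d = 2$ (so $\sum_{\ell} 1 = L + 1 = O(\log N)$, hence $t_{a/s} = O(N \log N)$) and is positive for $d \geq 3$ (giving $O(2^{(2d-2)L}) = O(N^{2(1-1/d)})$). Together with the $O(N)$ finest-level terms this is exactly \eqref{eqn:rskelf-t_f}--\eqref{eqn:rskelf-t_as}.

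The crux is the per-level bound $|\sk{c}| = O(k_{\ell})$; once it is in hand the rest is a routine geometric-series estimate. That bound in turn rests on two things: the rank hypothesis \eqref{eqn:inter-rank}, which the theorem grants, and the Green's-theorem/proxy argument that collapses the a priori $O(N)$-column compression of $W_{c}$ to a local problem whose rank is controlled by a bounded number of neighbor interactions --- without accelerated compression the $O(N) \times |c|$ matrix $W_{c}$ by itself would already force an $O(N^{2})$ floor, as noted in Section \ref{sec:rskelf:accel-comp}. One must also confirm that Schur-complement interactions accumulated at earlier levels corrupt neither the rank bound nor the proxy factorization; this is precisely the hierarchical-confinement point settled in the proof of Lemma \ref{lem:proxy-spec}.
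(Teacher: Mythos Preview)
Your proposal is correct and follows essentially the same approach as the paper: count $2^{d(L-\ell)}$ cells per level, observe that each cell at level $\ell \geq 1$ carries $|c| = O(k_{\ell-1}) = O(k_{\ell})$ active DOFs inherited from $2^{d}$ child skeletons (with $|c| = m^{d}$ at $\ell = 0$), bound the per-cell ID and elimination work by $O(|c|^{3})$ under the assumption $|c^{\nbr}| = O(|c|)$, and sum. The paper's proof is terser---it simply asserts the per-cell cost and states that the displayed formula follows---whereas you spell out the geometric-series evaluation and the proxy-rank justification for $|\sk{c}| = O(k_{\ell})$ explicitly; but the skeleton of the argument is identical.
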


\begin{proof}
 Consider first the factorization cost $t_{f}$. There are $2^{d(L - \ell)}$ cells at level $\ell$, where each cell $c \in C_{\ell}$ requires the calculation of an ID of $Y_{c}$ in \eqref{eqn:proxy-id} as well as various local matrix operations at a total cost of $O(|c|^{3})$, assuming that $|c^{\nbr}| = O(|c|)$. But $|c| = m^{d}$ for $\ell = 0$, while $|c| = O(k_{\ell - 1}) = O(k_{\ell})$ for $\ell \geq 1$ since the active DOFs $c$ are obtained by merging the skeletons of $2^{d}$ cells at level $\ell - 1$. Hence, \eqref{eqn:rskelf-t_f} follows.

 A similar derivation holds for $t_{a/s}$ by observing that each $c \in C_{\ell}$ requires local matrix-vector products with cost $O(|c|^{2})$.
\end{proof}

\begin{remark}
 If a tree is used, then there is also a cost of $O(N \log N)$ for tree construction, but the associated constant is tiny and so we can ignore it for all practical purposes.
\end{remark}

The memory cost to store $F$ itself is clearly $m_{f} = O(t_{a/s})$ and so is also given by \eqref{eqn:rskelf-t_as}. From Theorem \ref{thm:rskelf}, it is immediate that RSF behaves just like MF, with the geometric growth of $k_{\ell}$ in 2D and 3D leading to suboptimal complexities.

\begin{corollary}
 \label{cor:complexity-1d}
 If
 \begin{align}
  k_{\ell} = O(k \ell)
  \label{eqn:inter-rank-1d}
 \end{align}
 for some constant $k$, then $t_{f} = O(N k^{2})$ and $t_{a/s} = O(Nk)$.
\end{corollary}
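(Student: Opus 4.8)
The plan is to specialize the general bounds \eqref{eqn:rskelf-t_f} and \eqref{eqn:rskelf-t_as} of Theorem \ref{thm:rskelf} under the hypothesis \eqref{eqn:inter-rank-1d}, sum the resulting level-by-level costs, and choose the leaf cell size to balance the two contributions. Since $N = (2^{L} m)^{d}$ we have $2^{dL} = N/m^{d}$, so the first term of \eqref{eqn:rskelf-t_f} is $O(2^{dL} m^{3d}) = O(N m^{2d})$. For the level sum, substitute $k_{\ell} = O(k\ell)$ and pull out the factor $2^{dL}$: this gives $\sum_{\ell=0}^{L} 2^{d(L-\ell)} O(k_{\ell}^{3}) \le O(k^{3})\, 2^{dL} \sum_{\ell \ge 0} (\ell+1)^{3} 2^{-d\ell} = O(k^{3} N/m^{d})$, using that the series is a finite constant (see below). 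Hence $t_{f} = O(N m^{2d} + k^{3} N/m^{d})$, and taking the leaf size $m^{d} = \Theta(k)$ — the value that minimizes this sum — yields $t_{f} = O(Nk^{2})$.

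The estimate for $t_{a/s}$ is entirely parallel, with cubes replaced by squares: \eqref{eqn:rskelf-t_as} becomes $t_{a/s} = O(N m^{d}) + \sum_{\ell=0}^{L} 2^{d(L-\ell)} O(k_{\ell}^{2}) = O(N m^{d} + k^{2} N/m^{d})$ after the same substitution and summation, and with the same choice $m^{d} = \Theta(k)$ this is $O(Nk)$. (The $O(N \log N)$ tree-construction overhead is ignored, following the remark after Theorem \ref{thm:rskelf}.)

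The only step that is not pure arithmetic (plus the parameter optimization) is the bound $\sum_{\ell \ge 0} (\ell+1)^{j} 2^{-d\ell} = O(1)$, uniformly in $L$, for $j = 2, 3$. This is where the recursive geometry does the work: the number of cells at level $\ell$ decays geometrically, by a factor $2^{d} \ge 2$ per level, while the per-cell cost grows only polynomially in the level, as $k_{\ell}^{j} = O((k\ell)^{j})$; the product is therefore summable — dominated by $\sum_{\ell \ge 0} (\ell+1)^{j} 2^{-\ell} < \infty$ by the ratio test — so the total work equals, up to a constant factor, that of the finest level alone. I anticipate no real obstacle here; once Theorem \ref{thm:rskelf} is written out and one optimizes over $m$, the conclusion is immediate.
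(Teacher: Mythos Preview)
Your proof is correct and follows essentially the same route as the paper's: substitute $k_{\ell}=O(k\ell)$ into the level sums of Theorem~\ref{thm:rskelf}, use the convergence of $\sum_{\ell\ge 0}\ell^{j}2^{-d\ell}$ to reduce the sum to $O(2^{dL}k^{j})$, and then balance by taking $m^{d}=\Theta(k)$. The paper is just terser---it packages the two contributions as $t_{f}=O(2^{dL}(m^{d}+k)^{3})$ before substituting---but the argument is the same.
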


\begin{proof}
 From \eqref{eqn:rskelf-t_f}, $t_{f} = O(2^{dL} (m^{d} + k)^{3})$, so choosing $m^{d} = O(k)$ gives $N = n^{d} = (2^{L} m)^{d} = O(2^{dL} k)$ and $t_{f} = O(2^{dL} k^{3}) = O(N k^{2})$. Similarly, $t_{a/s} = O(2^{dL} (m^{d} + k)^{2}) = O(2^{dL} k^{2}) = O(Nk)$.
\end{proof}

This is a more precise version of the 1D result that will be useful later when discussing HIF-IE.

\section{Hierarchical Interpolative Factorization}
\label{sec:hifie}
In this section, we present HIF-IE, which builds upon RSF by introducing additional levels of skeletonization in order to effectively reduce all problems to 1D. Considering the 2D case for concreteness, the main idea is simply to employ an additional level $\ell + 1/2$ after each level $\ell$ by partitioning $\Omega$ according to the cell edges near which the surviving active DOFs cluster. This fully exploits the 1D geometry of the active DOFs. However, the algorithm is complicated by the fact that the cell and edge partitions are non-nested, so different index groups may now interact via SCIs. Such SCIs do not lend themselves easily to analysis and we have yet to prove a statement like \eqref{eqn:inter-rank} on their ranks. Nevertheless, extensive numerical experiments by ourselves (Section \ref{sec:results}) and others \cite{corona:2015:appl-comput-harmon-anal} reveal that very similar bounds appear to be obeyed. This suggests that SCIs do not need to be treated in any significantly different way, and we hereafter assume that interaction rank is completely determined by geometry.

The overall approach of HIF-IE is closely related to that of \cite{corona:2015:appl-comput-harmon-anal}, but our sparsification framework permits a much simpler implementation and analysis. As with RSF, we begin first in 2D before extending to 3D.

\subsection{Two Dimensions}
Assume the same setup as in Section \ref{sec:rskelf:2d}. HIF-IE supplements cell skeletonization (2D to 1D) at level $\ell$ with edge skeletonization (1D to 0D) at level $\ell + 1/2$ for each $\ell = 0, 1, \dots, L - 1$. Figure \ref{fig:hifie2} shows the active DOFs at each level for a representative example.
\begin{figure}
 \centering
 \begin{subfigure}{0.2\textwidth}
  \includegraphics[width=\textwidth]{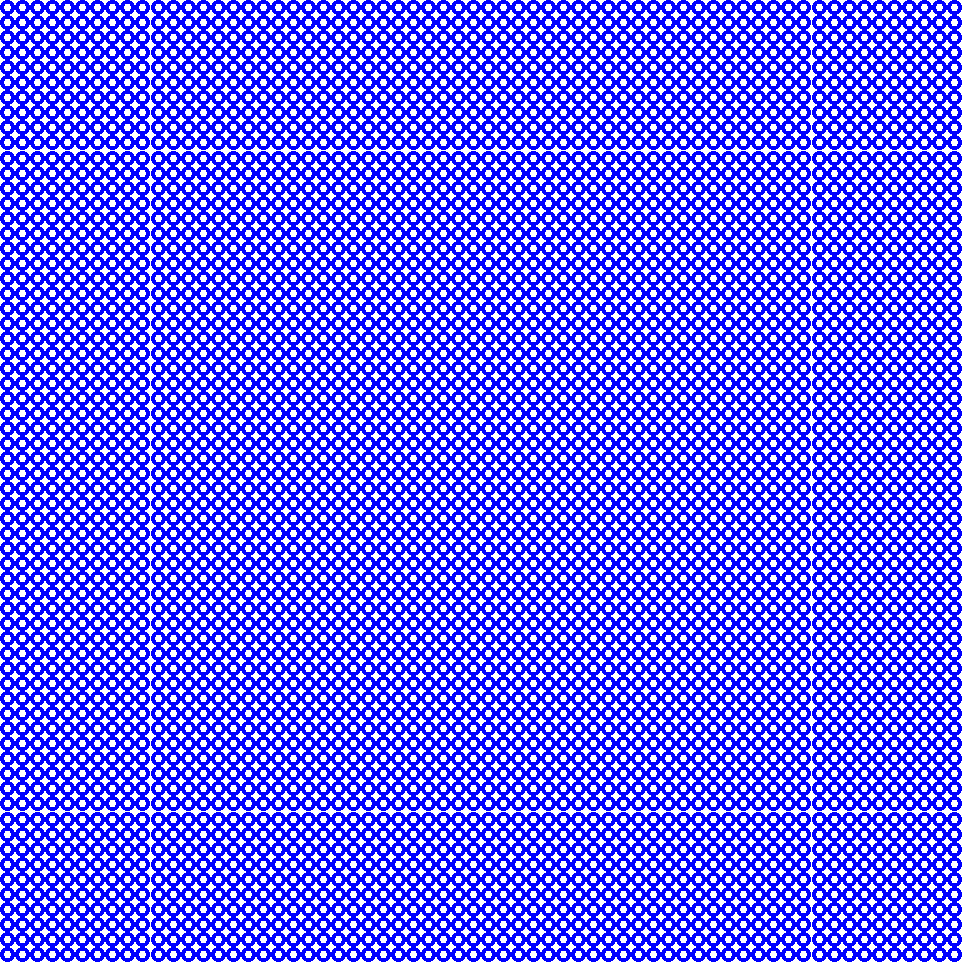}
  \caption*{$\ell = 0$}
 \end{subfigure}
 \quad
 \begin{subfigure}{0.2\textwidth}
  \includegraphics[width=\textwidth]{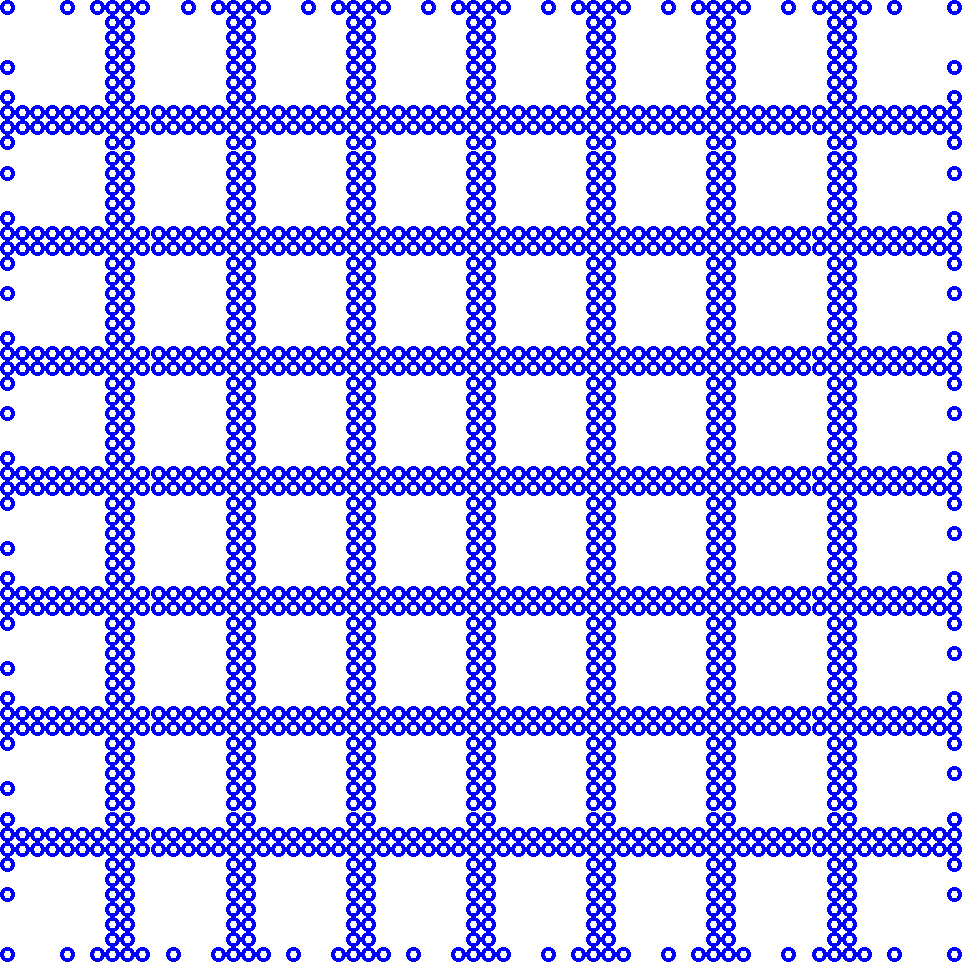}
  \caption*{$\ell = 1/2$}
 \end{subfigure}
 \quad
 \begin{subfigure}{0.2\textwidth}
  \includegraphics[width=\textwidth]{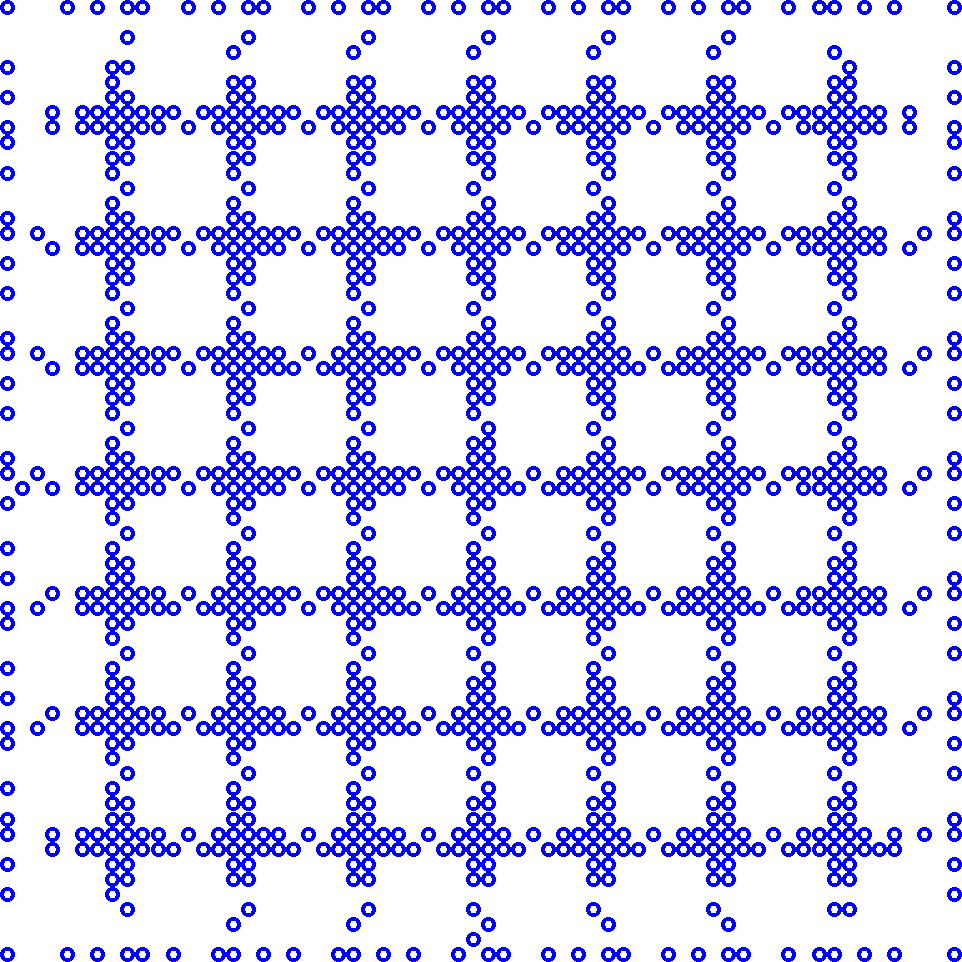}
  \caption*{$\ell = 1$}
 \end{subfigure}
 \quad
 \begin{subfigure}{0.2\textwidth}
  \includegraphics[width=\textwidth]{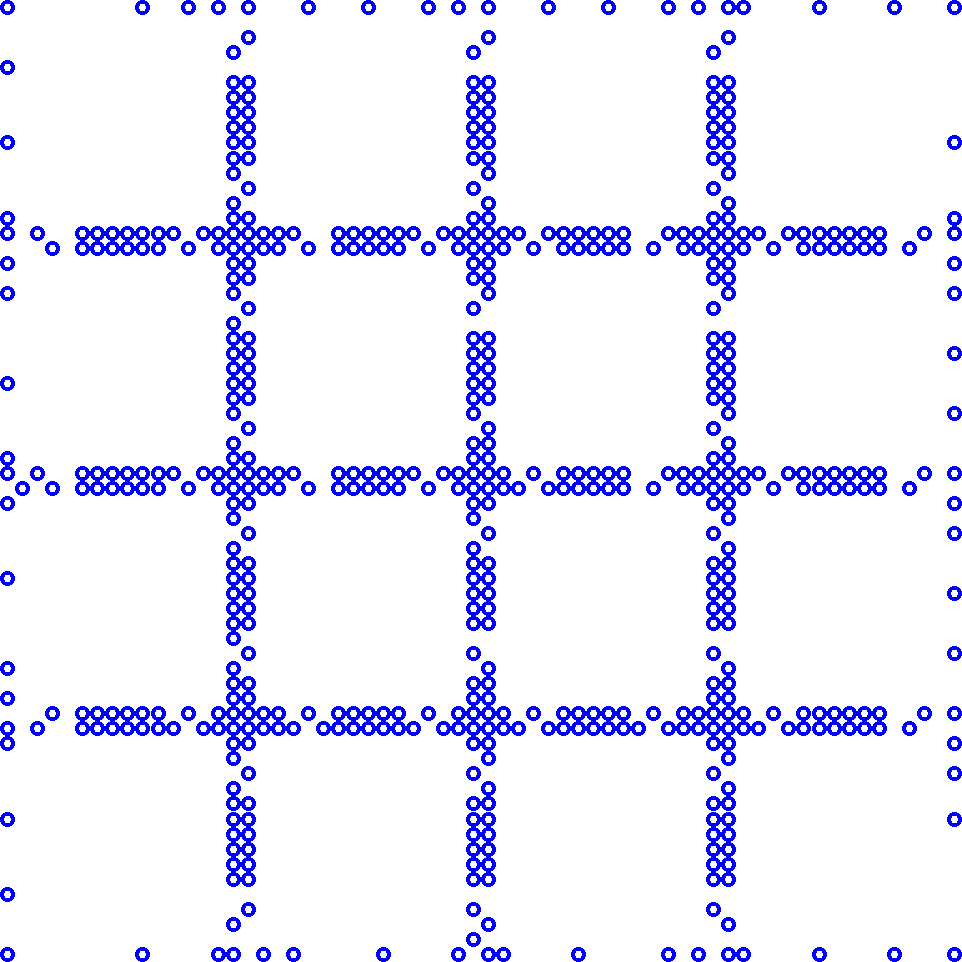}
  \caption*{$\ell = 3/2$}
 \end{subfigure}
 \\~\\~\\
 \begin{subfigure}{0.2\textwidth}
  \includegraphics[width=\textwidth]{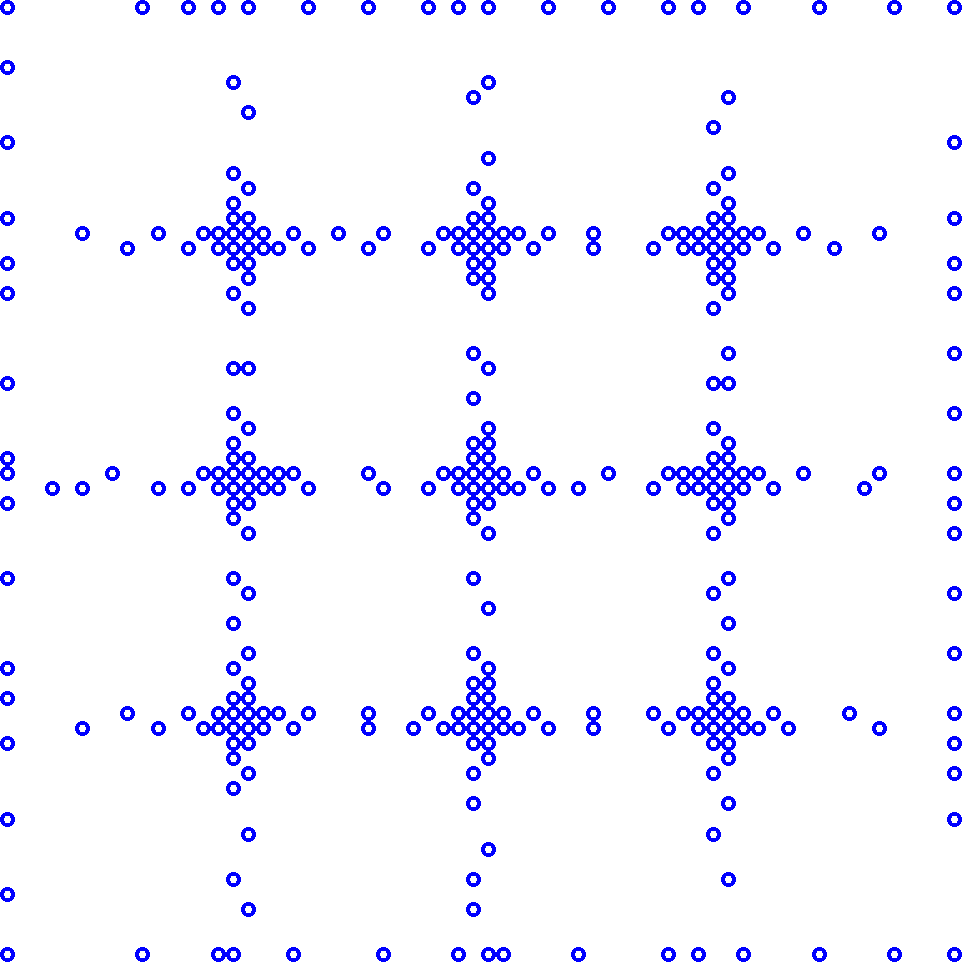}
  \caption*{$\ell = 2$}
 \end{subfigure}
 \quad
 \begin{subfigure}{0.2\textwidth}
  \includegraphics[width=\textwidth]{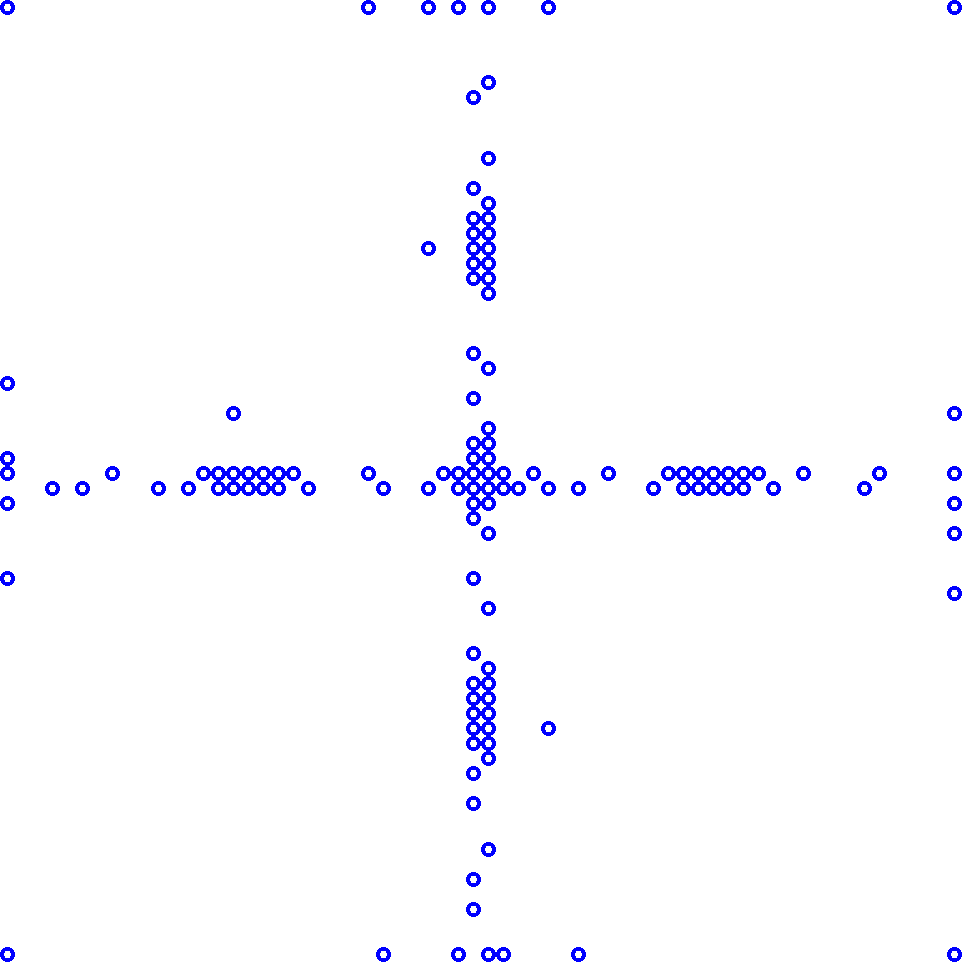}
  \caption*{$\ell = 5/2$}
 \end{subfigure}
 \quad
 \begin{subfigure}{0.2\textwidth}
  \includegraphics[width=\textwidth]{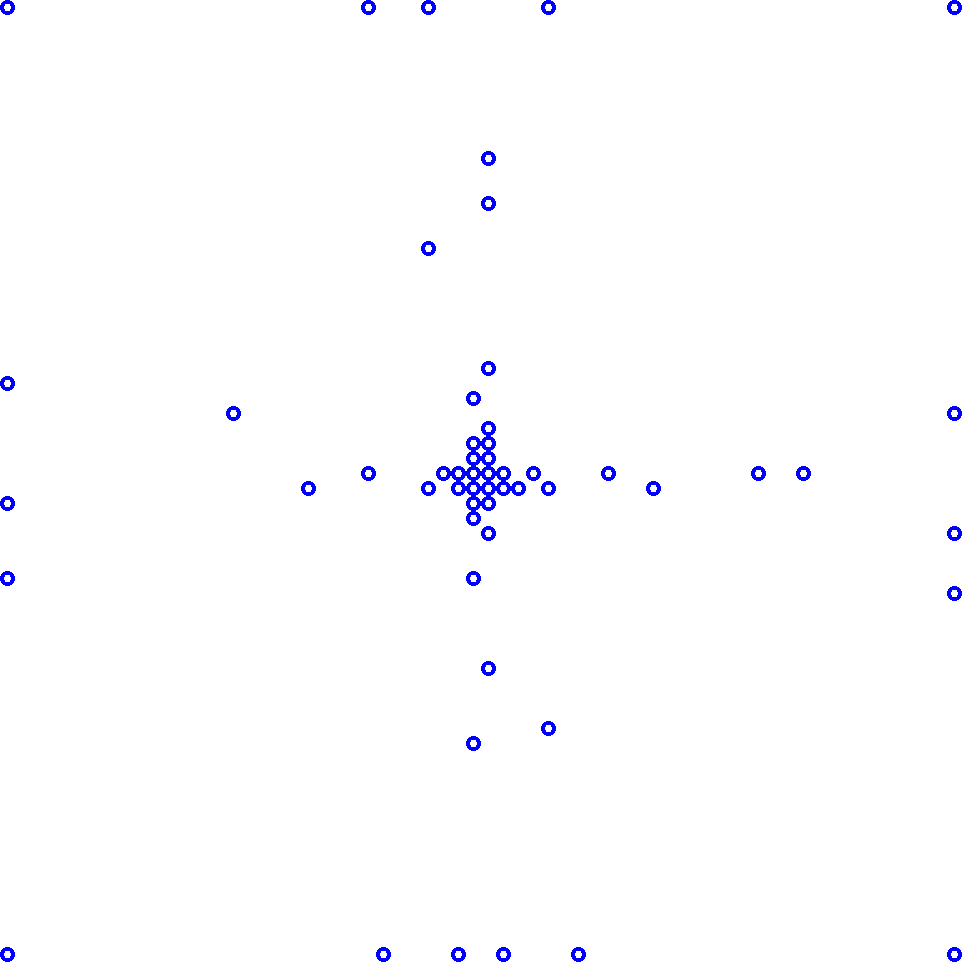}
  \caption*{$\ell = 3$}
 \end{subfigure}
 \caption{Active DOFs at each level $\ell$ of HIF-IE in 2D.}
 \label{fig:hifie2}
\end{figure}

\subsubsection*{Level $\ell$}
Partition $\Omega$ into Voronoi cells about the cell centers $2^{\ell} mh(j_{1} - 1/2, j_{2} - 1/2)$ for $1 \leq j_{1}, j_{2} \leq 2^{L - \ell}$. Let $C_{\ell}$ be the collection of index sets corresponding to the active DOFs of each cell. Skeletonization with respect to $C_{\ell}$ then gives
\begin{align*}
 A_{\ell + 1/2} = \skel_{C_{\ell}} (A_{\ell}) \approx U_{\ell}^{*} A_{\ell} V_{\ell}, \qquad U_{\ell} = \prod_{c \in C_{\ell}} Q_{c} R_{\rd{c}}, \quad V_{\ell} = \prod_{c \in C_{\ell}} Q_{c} S_{\rd{c}},
\end{align*}
where the DOFs $\bigcup_{c \in C_{\ell}} \rd{c}$ have been eliminated.

\subsubsection*{Level $\ell + 1/2$}
Partition $\Omega$ into Voronoi cells about the edge centers $2^{\ell} mh(j_{1}, j_{2} - 1/2)$ for $1 \leq j_{1} \leq 2^{L - \ell} - 1$, $1 \leq j_{2} \leq 2^{L - \ell}$ and $2^{\ell} mh(j_{1} - 1/2, j_{2})$ for $1 \leq j_{1} \leq 2^{L - \ell}$, $1 \leq j_{2} \leq 2^{L - \ell} - 1$. Let $C_{\ell + 1/2}$ be the collection of index sets corresponding to the active DOFs of each cell. Skeletonization with respect to $C_{\ell + 1/2}$ then gives
\begin{multline*}
 A_{\ell + 1} = \skel_{C_{\ell + 1/2}} (A_{\ell + 1/2}) \approx U_{\ell + 1/2}^{*} A_{\ell + 1/2} V_{\ell + 1/2},\\ U_{\ell + 1/2} = \prod_{c \in C_{\ell + 1/2}} Q_{c} R_{\rd{c}}, \quad V_{\ell + 1/2} = \prod_{c \in C_{\ell + 1/2}} Q_{c} S_{\rd{c}},
\end{multline*}
where the DOFs $\bigcup_{c \in C_{\ell + 1/2}} \rd{c}$ have been eliminated.

\subsubsection*{Level $L$}
Combining the approximation over all levels gives
\begin{align*}
 D \equiv A_{L} \approx U_{L - 1/2}^{*} \cdots U_{1/2}^{*} U_{0}^{*} A V_{0} V_{1/2} \cdots V_{L - 1/2},
\end{align*}
so
\begin{subequations}
 \label{eqn:hifie2}
 \begin{align}
  A &\approx U_{0}^{-*} U_{1/2}^{-*} \cdots U_{L - 1/2}^{-*} D V_{L - 1/2}^{-1} \cdots V_{1/2}^{-1} V_{0}^{-1} \equiv F,\\
  A^{-1} &\approx V_{0} V_{1/2} \cdots V_{L - 1/2} D^{-1} U_{L - 1/2}^{*} \cdots U_{1/2}^{*} U_{0}^{*} = F^{-1}.
 \end{align}
\end{subequations}
This is a factorization of exactly the same type as that in \eqref{eqn:rskelf} (but with twice the number of factors). The entire procedure is summarized as Algorithm \ref{alg:hifie2}.
\begin{algorithm}
 \caption{HIF-IE in 2D.}
 \label{alg:hifie2}
 \begin{algorithmic}
  \State $A_{0} = A$ \Comment{initialize}
  \For{$\ell = 0, 1, \dots, L - 1$} \Comment{loop from finest to coarsest level}
   \State $A_{\ell + 1/2} = \skel_{C_{\ell}} (A_{\ell}) \approx U_{\ell}^{*} A_{\ell} V_{\ell}$ \Comment{skeletonize cells}
   \State $A_{\ell + 1} = \skel_{C_{\ell + 1/2}} (A_{\ell + 1/2}) \approx U_{\ell + 1/2}^{*} A_{\ell + 1/2} V_{\ell + 1/2}$ \Comment{skeletonize edges}
  \EndFor
  \State $A \approx U_{0}^{-*} U_{1/2}^{-*} \cdots U_{L - 1/2}^{-*} A_{L} V_{L - 1/2}^{-1} \cdots V_{1/2}^{-1} V_{0}^{-1}$ \Comment{generalized LU decomposition}
 \end{algorithmic}
\end{algorithm}

\subsection{Three Dimensions}
Assume the same setup as in Section \ref{sec:rskelf:3d}. HIF-IE now performs two rounds of additional dimensional reduction over RSF by supplementing cell skeletonization (3D to 2D) at level $\ell$ with face skeletonization (2D to 1D) at level $\ell + 1/3$ and edge skeletonization (1D to 0D) at level $\ell + 2/3$. Figure \ref{fig:hifie3} shows the active DOFs at each level for a representative example.
\begin{figure}
 \centering
 \begin{subfigure}{0.2\textwidth}
  \includegraphics[width=\textwidth]{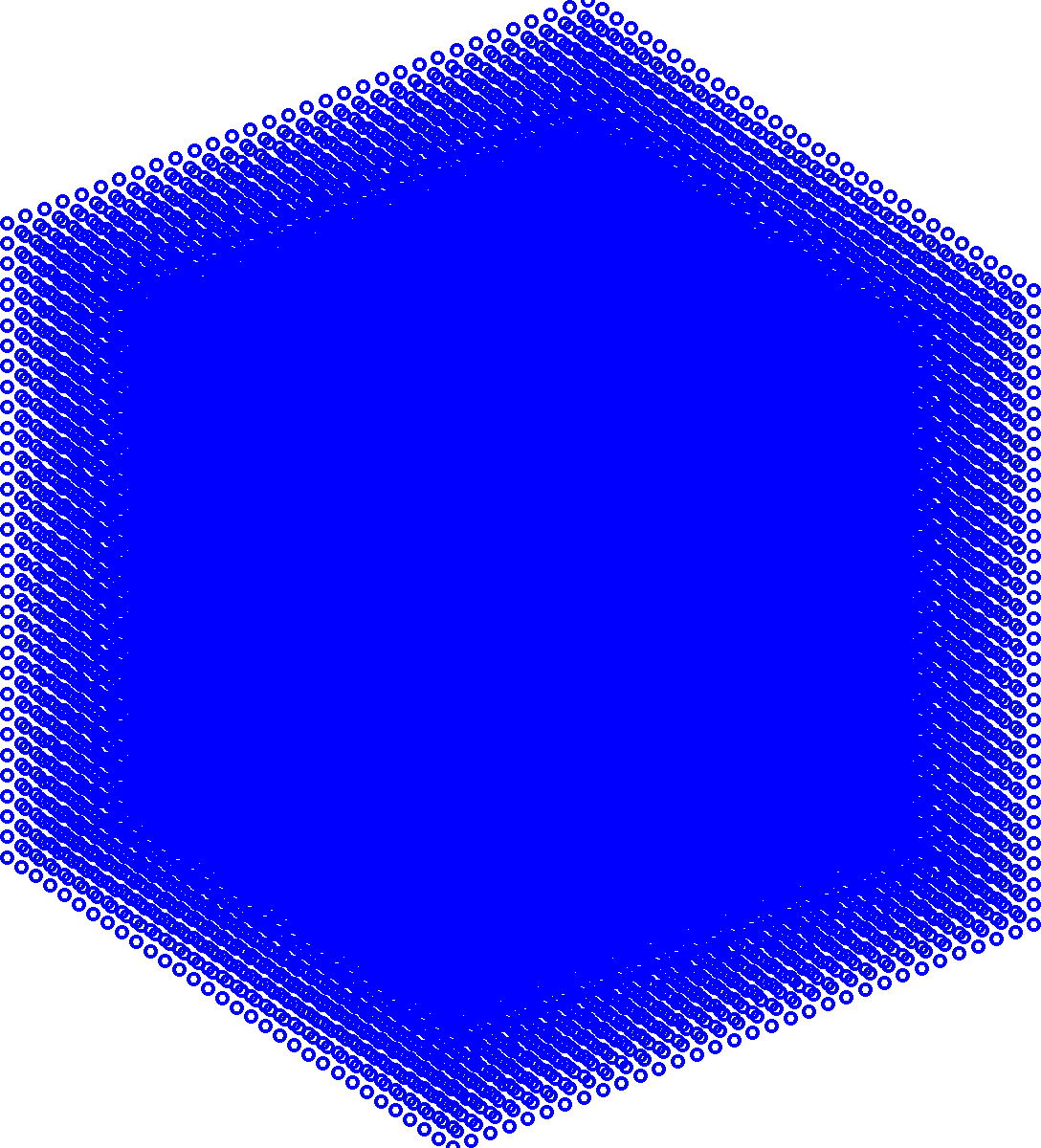}
  \caption*{$\ell = 0$}
 \end{subfigure}
 \quad
 \begin{subfigure}{0.2\textwidth}
  \includegraphics[width=\textwidth]{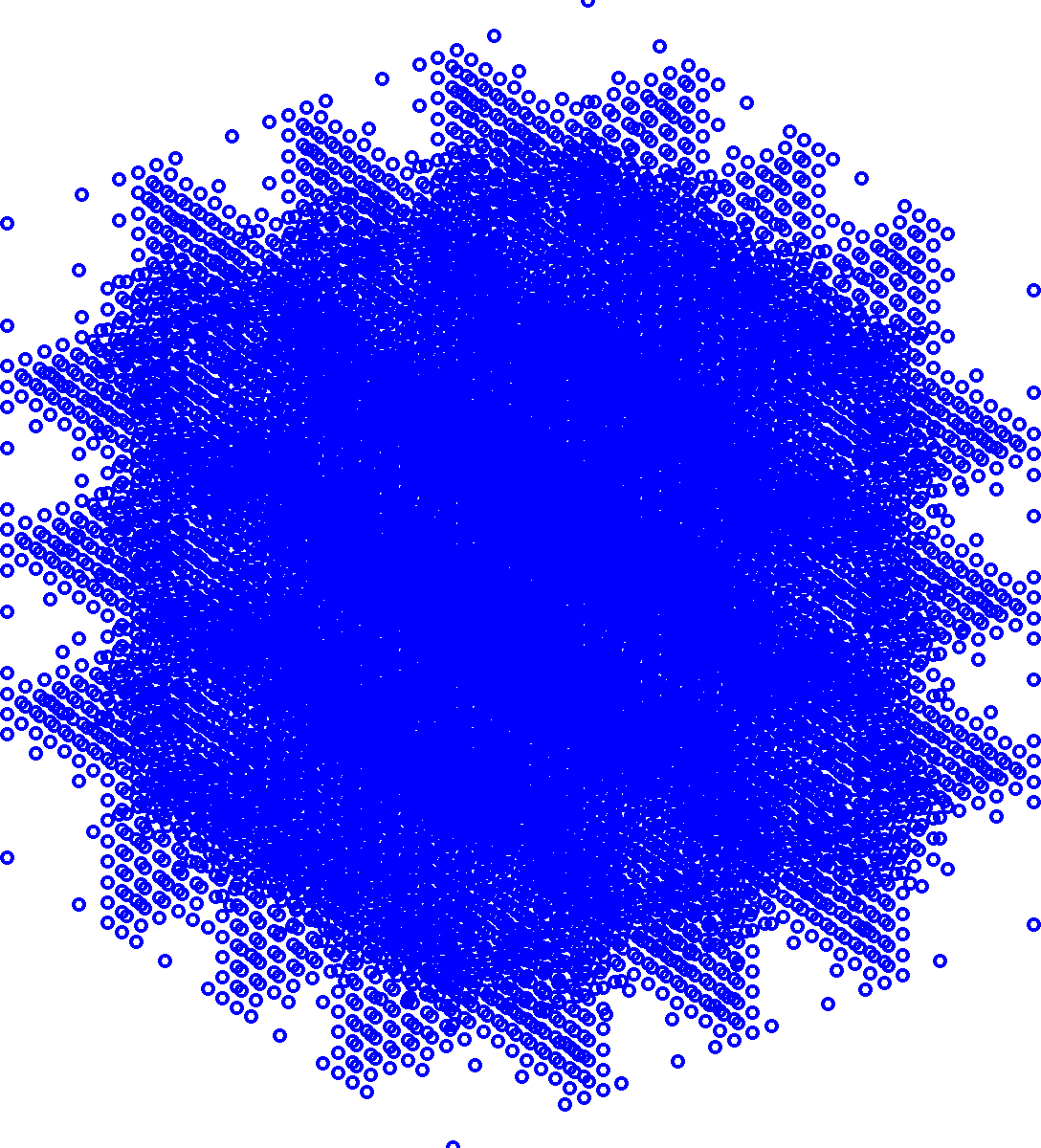}
  \caption*{$\ell = 1/3$}
 \end{subfigure}
 \quad
 \begin{subfigure}{0.2\textwidth}
  \includegraphics[width=\textwidth]{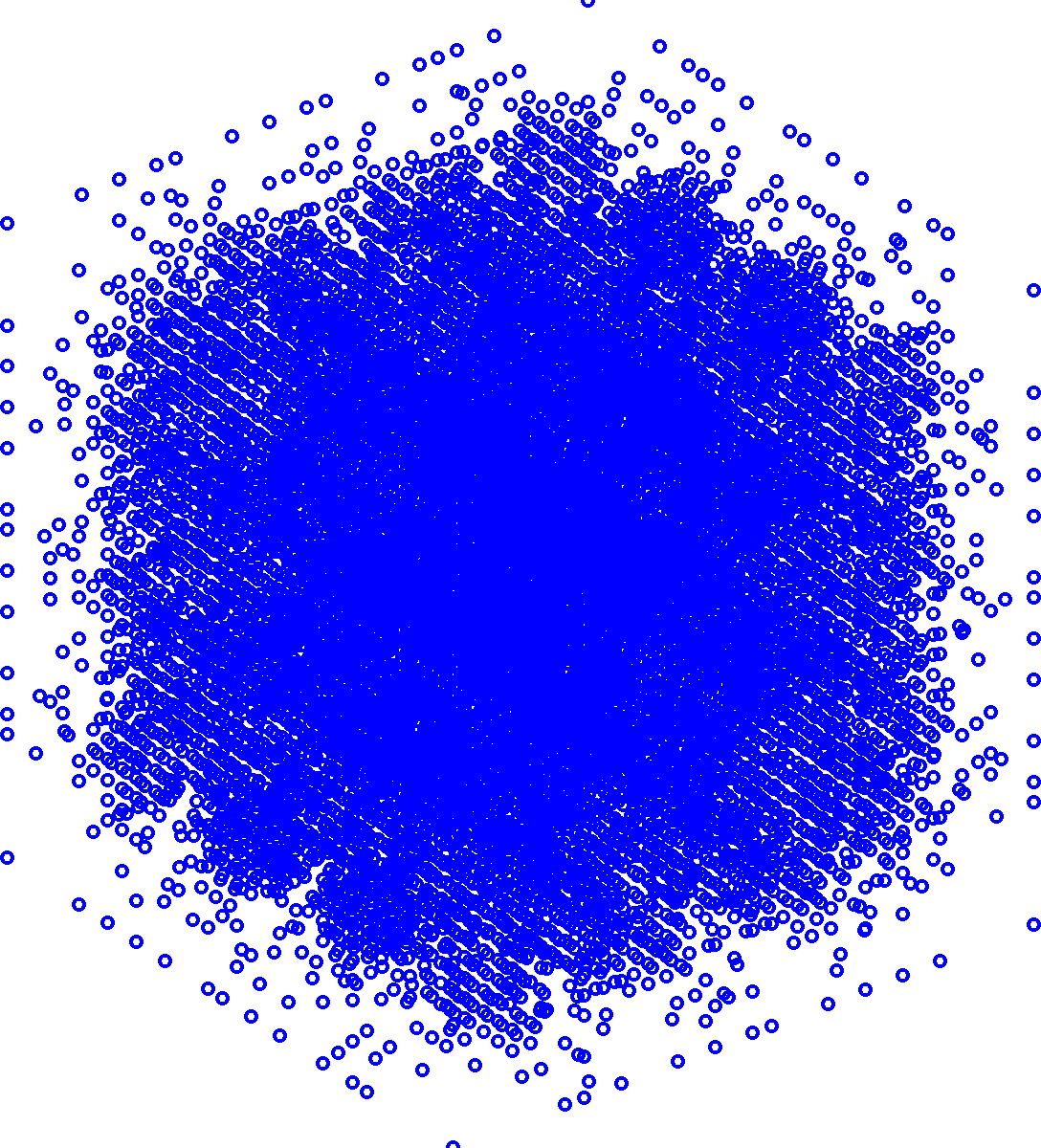}
  \caption*{$\ell = 2/3$}
 \end{subfigure}
 \quad
 \begin{subfigure}{0.2\textwidth}
  \includegraphics[width=\textwidth]{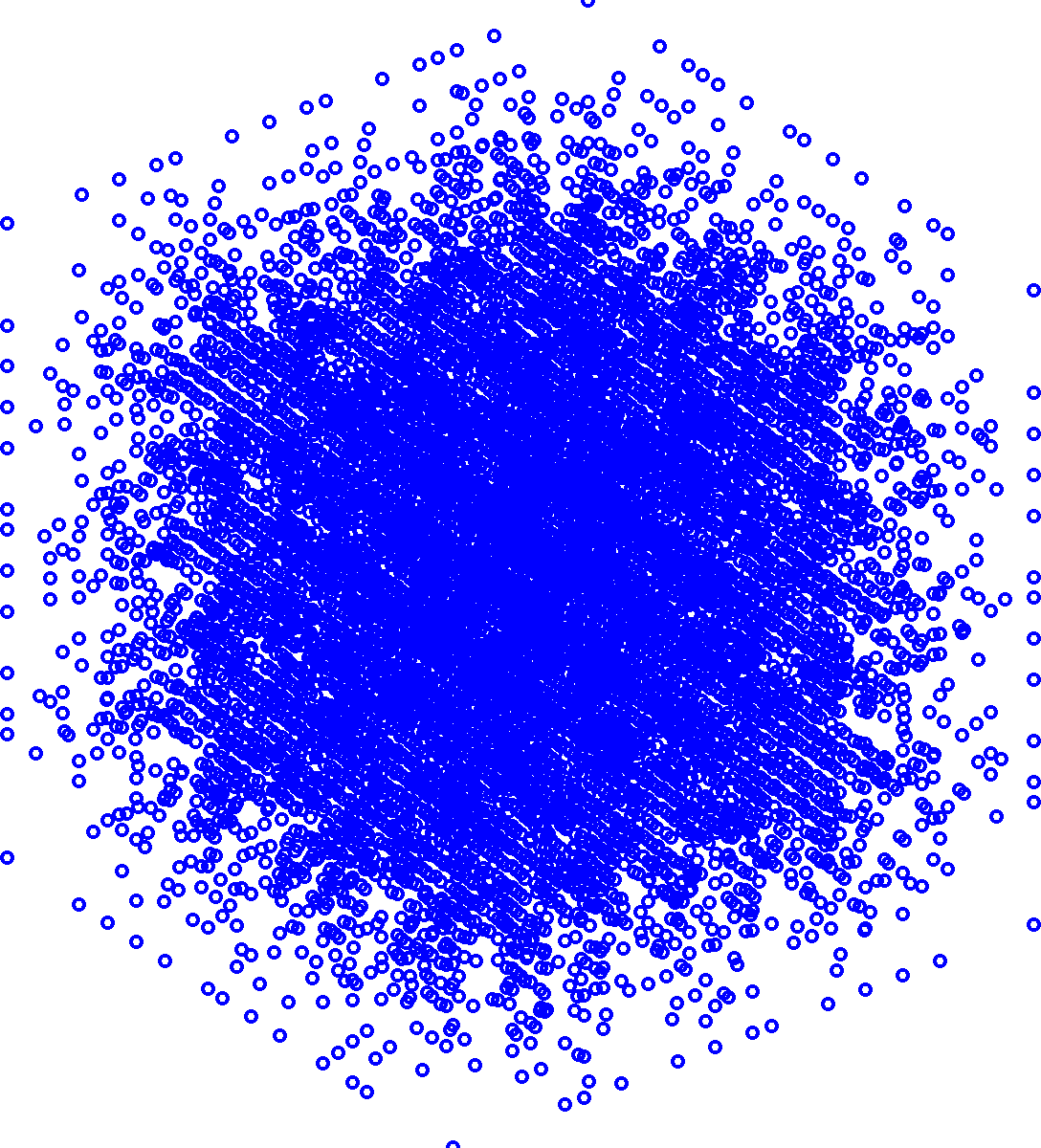}
  \caption*{$\ell = 1$}
 \end{subfigure}
 \\~\\~\\
 \begin{subfigure}{0.2\textwidth}
  \includegraphics[width=\textwidth]{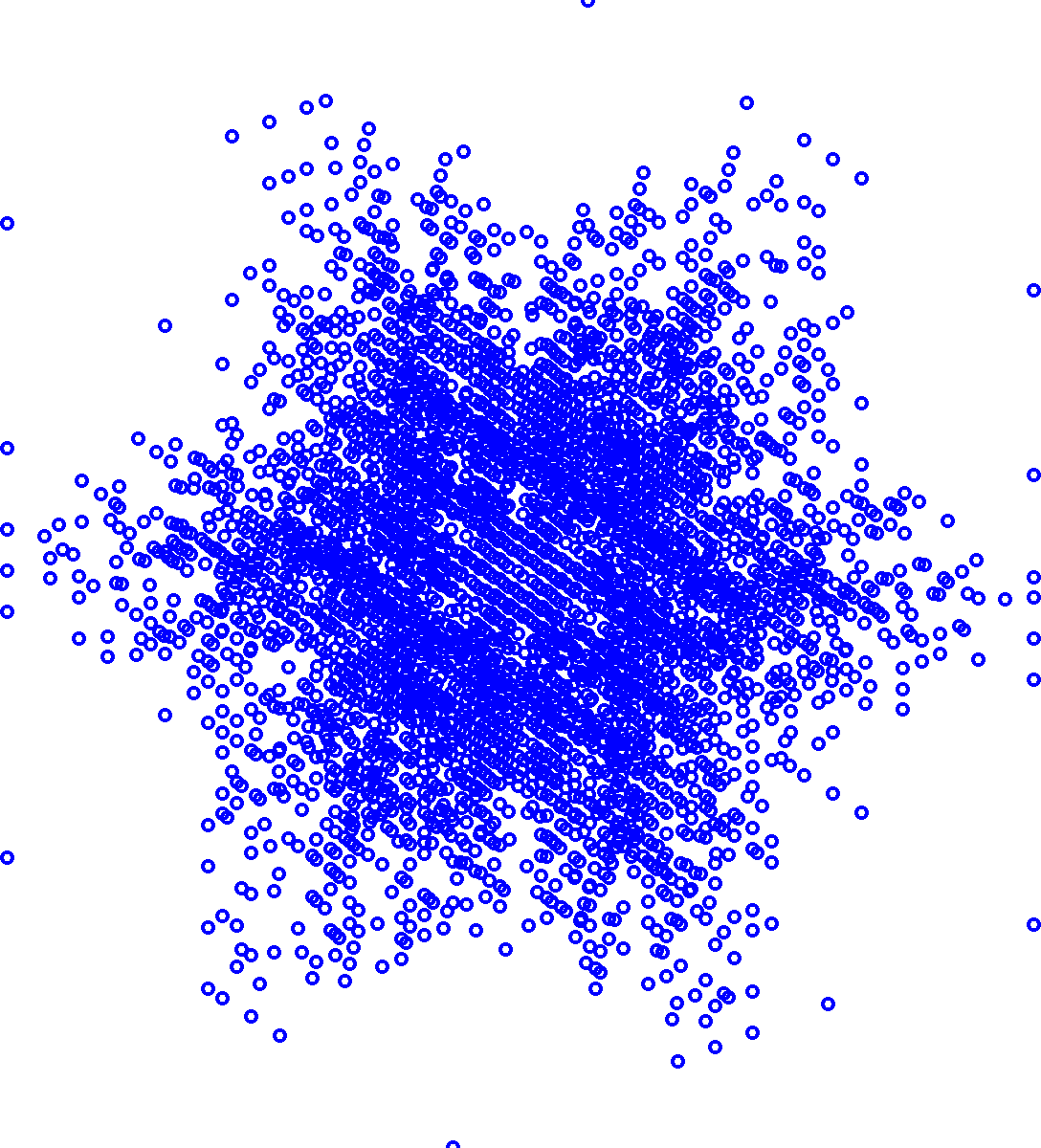}
  \caption*{$\ell = 4/3$}
 \end{subfigure}
 \quad
 \begin{subfigure}{0.2\textwidth}
  \includegraphics[width=\textwidth]{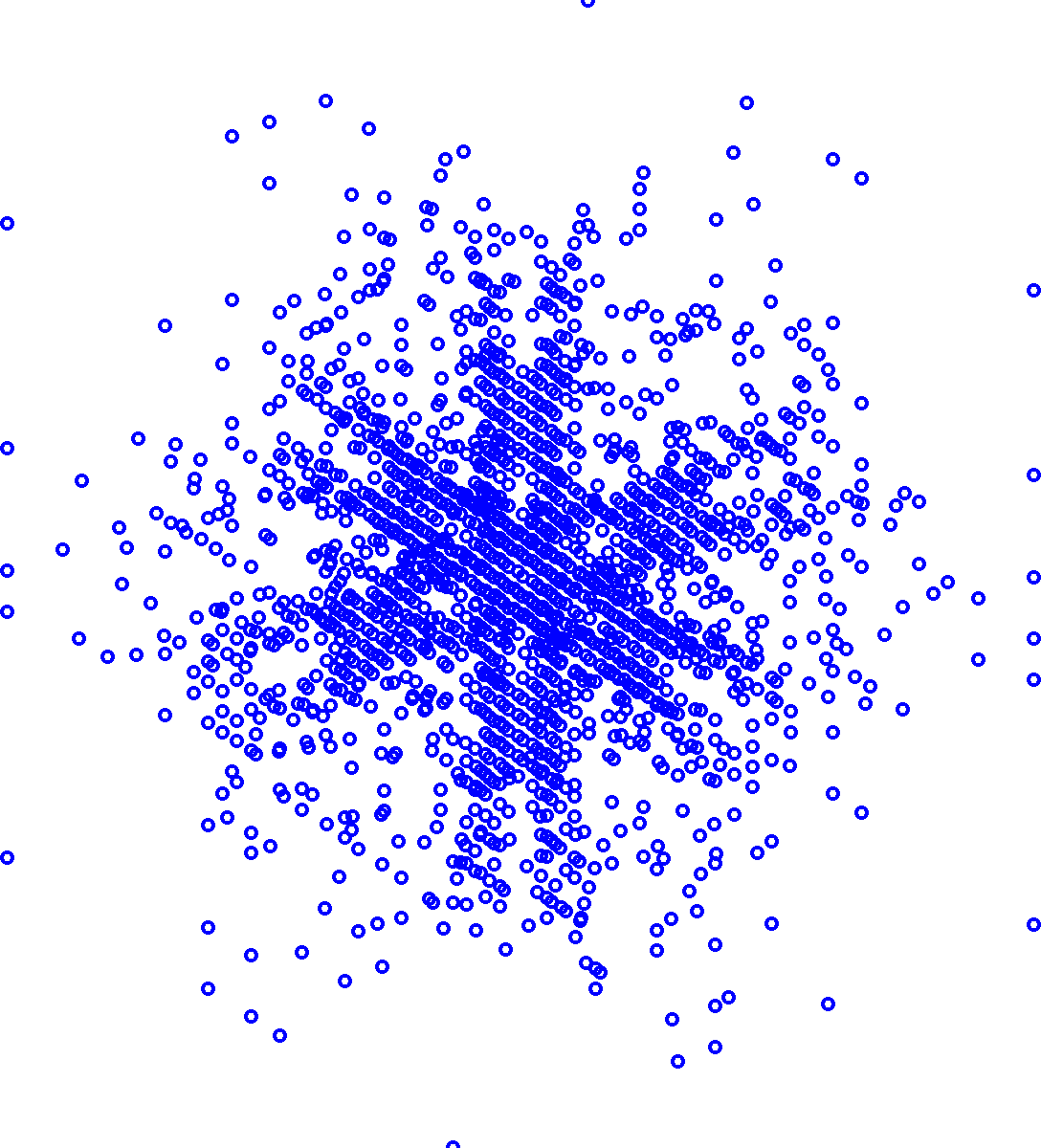}
  \caption*{$\ell = 5/3$}
 \end{subfigure}
 \quad
 \begin{subfigure}{0.2\textwidth}
  \includegraphics[width=\textwidth]{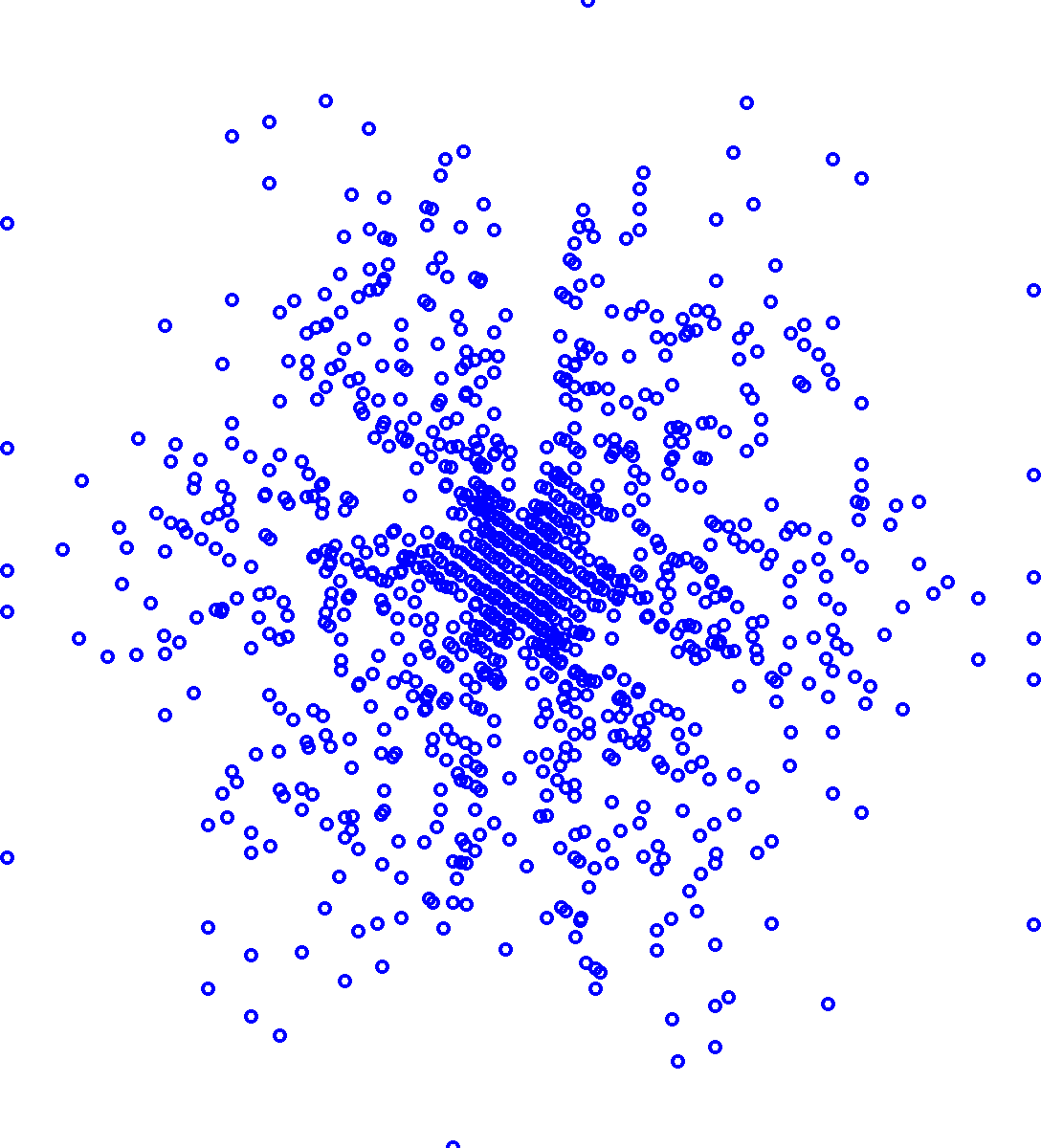}
  \caption*{$\ell = 2$}
 \end{subfigure}
 \caption{Active DOFs at each level $\ell$ of HIF-IE in 3D.}
 \label{fig:hifie3}
\end{figure}

\subsubsection*{Level $\ell$}
Partition $\Omega$ into Voronoi cells about the cell centers $2^{\ell} mh(j_{1} - 1/2, j_{2} - 1/2, j_{3} - 1/2)$ for $1 \leq j_{1}, j_{2}, j_{3} \leq 2^{L - \ell}$. Let $C_{\ell}$ be the collection of index sets corresponding to the active DOFs of each cell. Skeletonization with respect to $C_{\ell}$ then gives
\begin{align*}
 A_{\ell + 1/3} = \skel_{C_{\ell}} (A_{\ell}) \approx U_{\ell}^{*} A_{\ell} V_{\ell}, \qquad U_{\ell} = \prod_{c \in C_{\ell}} Q_{c} R_{\rd{c}}, \quad V_{\ell} = \prod_{c \in C_{\ell}} Q_{c} S_{\rd{c}},
 \end{align*}
where the DOFs $\bigcup_{c \in C_{\ell}} \rd{c}$ have been eliminated.

\subsubsection*{Level $\ell + 1/3$}
Partition $\Omega$ into Voronoi cells about the face centers
\begin{align*}
 &2^{\ell} mh \left( j_{1}, j_{2} - \frac{1}{2}, j_{3} - \frac{1}{2} \right), & &1 \leq j_{1} \leq 2^{L - \ell} - 1, & &1 \leq j_{2}, j_{3} \leq 2^{L - \ell},\\
 &2^{\ell} mh \left( j_{1} - \frac{1}{2}, j_{2}, j_{3} - \frac{1}{2} \right), & &1 \leq j_{2} \leq 2^{L - \ell} - 1, & &1 \leq j_{1}, j_{3} \leq 2^{L - \ell},\\
 &2^{\ell} mh \left( j_{1} - \frac{1}{2}, j_{2} - \frac{1}{2}, j_{3} \right), & &1 \leq j_{3} \leq 2^{L - \ell} - 1, & &1 \leq j_{1}, j_{2} \leq 2^{L - \ell}.\\
\end{align*}
Let $C_{\ell + 1/3}$ be the collection of index sets corresponding to the active DOFs of each cell. Skeletonization with respect to $C_{\ell + 1/3}$ then gives
\begin{multline*}
 A_{\ell + 2/3} = \skel_{C_{\ell + 1/3}} (A_{\ell + 1/3}) \approx U_{\ell + 1/3}^{*} A_{\ell + 1/3} V_{\ell + 1/3},\\
 U_{\ell + 1/3} = \prod_{c \in C_{\ell + 1/3}} Q_{c} R_{\rd{c}}, \quad V_{\ell + 1/3} = \prod_{c \in C_{\ell + 1/3}} Q_{c} S_{\rd{c}},
\end{multline*}
where the DOFs $\bigcup_{c \in C_{\ell + 1/3}} \rd{c}$ have been eliminated.

\subsubsection*{Level $\ell + 2/3$}
Partition $\Omega$ into Voronoi cells about the edge centers
\begin{align*}
 &2^{\ell} mh \left( j_{1}, j_{2}, j_{3} - \frac{1}{2} \right), & &1 \leq j_{1}, j_{2} \leq 2^{L - \ell} - 1, & &1 \leq j_{3} \leq 2^{L - \ell},\\
 &2^{\ell} mh \left( j_{1}, j_{2} - \frac{1}{2}, j_{3} \right), & &1 \leq j_{1}, j_{3} \leq 2^{L - \ell} - 1, & &1 \leq j_{2} \leq 2^{L - \ell},\\
 &2^{\ell} mh \left( j_{1} - \frac{1}{2}, j_{2}, j_{3} \right), & &1 \leq j_{2}, j_{3} \leq 2^{L - \ell} - 1, & &1 \leq j_{1} \leq 2^{L - \ell}.
\end{align*}
Let $C_{\ell + 2/3}$ be the collection of index sets corresponding to the active DOFs of each cell. Skeletonization with respect to $C_{\ell + 2/3}$ then gives
\begin{multline*}
 A_{\ell + 1} = \skel_{C_{\ell + 2/3}} (A_{\ell + 2/3}) \approx U_{\ell + 2/3}^{*} A_{\ell + 2/3} V_{\ell + 2/3},\\
 U_{\ell + 2/3} = \prod_{c \in C_{\ell + 2/3}} Q_{c} R_{\rd{c}}, \quad V_{\ell + 2/3} = \prod_{c \in C_{\ell + 2/3}} Q_{c} S_{\rd{c}},
\end{multline*}
where the DOFs $\bigcup_{c \in C_{\ell + 2/3}} \rd{c}$ have been eliminated.

\subsubsection*{Level $L$}
Combining the approximation over all levels gives
\begin{align*}
 D \equiv A_{L} \approx U_{L - 1/3}^{*} \cdots U_{2/3}^{*} U_{1/3}^{*} U_{0}^{*} A V_{0} V_{1/3} V_{2/3} \cdots V_{L - 1/3},
\end{align*}
so
\begin{subequations}
 \label{eqn:hifie3}
 \begin{align}
  A &\approx U_{0}^{-*} U_{1/3}^{-*} U_{2/3}^{-*} \cdots U_{L - 1/3}^{-*} D V_{L - 1/3}^{-1} \cdots V_{2/3}^{-1} V_{1/3}^{-1} V_{0}^{-1} \equiv F,\\
  A^{-1} &\approx V_{0} V_{1/3} V_{2/3} \cdots V_{L - 1/3} D^{-1} U_{L - 1/3}^{*} \cdots U_{2/3}^{*} U_{1/3}^{*} U_{0}^{*} = F^{-1}.
 \end{align}
\end{subequations}
This procedure is summarized as Algorithm \ref{alg:hifie3}.
\begin{algorithm}
 \caption{HIF-IE in 3D.}
 \label{alg:hifie3}
 \begin{algorithmic}
  \State $A_{0} = A$ \Comment{initialize}
  \For{$\ell = 0, 1, \dots, L - 1$} \Comment{loop from finest to coarsest level}
   \State $A_{\ell + 1/3} = \skel_{C_{\ell}} (A_{\ell}) \approx U_{\ell}^{*} A_{\ell} V_{\ell}$ \Comment{skeletonize cells}
   \State $A_{\ell + 2/3} = \skel_{C_{\ell + 1/3}} (A_{\ell + 1/3}) \approx U_{\ell + 1/3}^{*} A_{\ell + 1/3} V_{\ell + 1/3}$ \Comment{skeletonize faces}
   \State $A_{\ell + 1} = \skel_{C_{\ell + 2/3}} (A_{\ell + 2/3}) \approx U_{\ell + 2/3}^{*} A_{\ell + 2/3} V_{\ell + 2/3}$ \Comment{skeletonize edges}
  \EndFor
  \State $A \approx U_{0}^{-*} U_{1/3}^{-*} \cdots U_{L - 1/3}^{-*} A_{L} V_{L - 1/3}^{-1} \cdots V_{1/3}^{-1} V_{0}^{-1}$ \Comment{generalized LU decomposition}
 \end{algorithmic}
\end{algorithm}

\subsection{Accelerated Compression}
\label{sec:hifie:accel-comp}
Proxy compression still applies, provided that we make some minor modifications to account for SCIs, which we generally have access to only numerically and so cannot evaluate at arbitrary points as needed in Lemma \ref{lem:proxy-gen}. Specifically, for a given index set $c$, we now expand $c^{\nbr}$ by including all DOFs that interact with $c$ via SCIs in addition to those interior to $\Gamma$ as in Section \ref{sec:rskelf:accel-comp}. The far field $c^{\far} = c^{\cmp} \setminus c^{\nbr}$ then consists only of original kernel interactions, so Lemma \ref{lem:proxy-gen} holds. It remains to observe that SCIs are local due to the domain partitioning strategy. Thus, all $c^{\nbr}$ reside in an immediate neighborhood of $c$ and we again conclude that $|c^{\nbr}| = O(|c|)$.

Even with this acceleration, however, the ID still manifests as a computational bottleneck. To combat this, we also tried fast randomized methods \cite{halko:2011:siam-rev} based on compressing $\Phi_{c} Y_{c}$, where $\Phi_{c}$ is a small Gaussian random sampling matrix. We found that the resulting ID was inaccurate when $Y_{c}$ contained SCIs. This could be remedied by considering instead $\Phi_{c} (Y_{c} Y_{c}^{*})^{\gamma} Y_{c}$ for some small integer $\gamma = 1, 2, \dots$, but the expense of the extra multiplications usually outweighed any efficiency gains.

\subsection{Modifications for Second-Kind Integral Equations}
\label{sec:hifie:2k-mod}
The algorithms presented so far are highly accurate for first-kind IEs in that $\| A - F \| / \| A \| = O(\epsilon)$, where $\epsilon$ is the input precision to the ID (Section \ref{sec:results}). For second-kind IEs, however, we see a systematic deterioration of the relative error roughly as $O(N \epsilon)$ as $N \to \infty$. This instability can be explained as follows. Let $A$ be a typical second-kind IE matrix discretization. Then the diagonal entries of $A$ are $O(1)$, while its off-diagonal entries are $O(1/N)$. Since the interpolation matrix, say, $T_{p}$ from the ID has entries of order $O(1)$, the same is true of $B_{\rd{p} \rd{p}}$, $B_{\rd{p} \sk{p}}$, and $B_{\sk{p} \rd{p}}$ in \eqref{eqn:id-sparse}. Therefore, the entries of the Schur complement $B_{\sk{p} \sk{p}}$ in \eqref{eqn:skel} are $O(1)$, i.e., SCIs dominate kernel interactions by a factor of $O(N)$.

\begin{lemma}
 \label{lem:2k-id}
 Assume the setting of the discussion above and let $c \in C_{\ell}$ be such that $Y_{c}$ in \eqref{eqn:proxy-id} contains SCIs. Then $\| Y_{c} \| = O(1)$, so the ID of $Y_{c}$ has absolute error $\| E_{c} \| = O(\epsilon)$.
\end{lemma}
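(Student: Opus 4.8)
The plan is to read off the claim directly from the structure of $Y_c$ in \eqref{eqn:proxy-id} together with the accuracy guarantee \eqref{eqn:id-bounds} for the adaptive ID. Recall that $Y_c$ stacks four blocks: the near-field kernel interactions $A_{c^{\nbr},c}$ and $A_{c,c^{\nbr}}^{*}$, and the proxy blocks $Y_{c^{\eqv},c}$ and $Y_{c,c^{\eqv}}^{*}$. Under the hypothesis that $Y_c$ contains SCIs, at least one of the near-field blocks inherits Schur-complement entries from previous levels of skeletonization, and by the discussion preceding the lemma these entries are $O(1)$. The first step is therefore to argue that $\| Y_c \| = O(1)$: the SCI-bearing block has $O(1)$ entries and $O(|c^{\nbr}|) \times O(|c|) = O(1)$ size (since $|c^{\nbr}| = O(|c|)$ and $|c| = O(1)$ for fixed precision by the skeleton-size estimates), so its norm is $O(1)$; the proxy blocks have entries bounded by kernel values on a separated surface and so are also $O(1)$; the remaining near-field kernel block has $O(1/N)$ entries over an $O(1)$-size block and is hence $O(1/N) = o(1)$. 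Taking the maximum over the blocks (or summing, since there are only four) gives $\| Y_c \| = O(1)$.

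The second step is to translate this norm bound into the absolute error statement. The adaptive ID is run to relative precision $\epsilon$, meaning it selects the rank $k$ so that the error matrix $E_c$ with $(Y_c)_{:,\rd{c}} = (Y_c)_{:,\sk{c}} T_c + E_c$ satisfies $\| E_c \| \lesssim \epsilon \| Y_c \|$, as stated in the paragraph following \eqref{eqn:id-bounds-fcn}. Combining with $\| Y_c \| = O(1)$ immediately yields $\| E_c \| = O(\epsilon)$, which is the conclusion. Via Lemma \ref{lem:proxy-id}, this same $T_c$ reconstructs $W_c$ (hence the relevant off-diagonal blocks of $A_\ell$) with an error that is the product of $E_c$ with the unknown-but-existent factor $X_c$; but the lemma as stated only asserts the bound on $\| E_c \|$ itself, so no further work is needed there.

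The one subtlety worth flagging is the distinction between \emph{absolute} and \emph{relative} error, which is the whole point of the lemma in the context of Section \ref{sec:hifie:2k-mod}. For a first-kind IE the off-diagonal kernel blocks are themselves $O(1)$, so an absolute error of $O(\epsilon)$ is also a relative error of $O(\epsilon)$; but here the genuine kernel contribution to $W_c$ is only $O(1/N)$ while the SCIs are $O(1)$, so running the ID to fixed relative precision $\epsilon$ controls the $O(1)$ part to $O(\epsilon)$ but leaves the $O(1/N)$ kernel part controlled only to \emph{relative} precision $O(N\epsilon)$ — this is exactly the $O(N\epsilon)$ degradation observed empirically. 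The main obstacle, then, is not any hard estimate but rather being precise about what is and is not being claimed: the lemma guarantees the $O(\epsilon)$ absolute error (which is all that can be asked once SCIs dominate), and it is this mismatch between absolute and relative scales that the subsequent modifications are designed to repair. I would state the proof in two or three sentences accordingly, with the block-by-block norm bound as the only real computation.
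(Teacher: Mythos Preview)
Your proposal is correct and matches the paper's treatment: the paper states Lemma~\ref{lem:2k-id} without an explicit proof, regarding it as an immediate consequence of the preceding paragraph (SCI entries are $O(1)$, hence $\|Y_c\| = O(1)$, hence the adaptive ID at relative precision $\epsilon$ yields $\|E_c\| = O(\epsilon)$). Your block-by-block expansion and your discussion of the absolute-versus-relative distinction are exactly the intended reading; the only minor slip is that the proxy blocks, like the pure kernel near-field block, carry quadrature weights and so have entries of order $O(1/N)$ rather than $O(1)$ in the second-kind setting---but this does not affect the conclusion since the SCI block dominates regardless.
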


Consider now the process of ``unfolding'' the factorization $F$ from the middle matrix $D \equiv A_{L}$ outward. This is accomplished by undoing the skeletonization operation for each $c \in C_{\ell}$ in reverse order, at each step reconstructing $(A_{\ell})_{:,\rd{c}}$ and $(A_{\ell})_{\rd{c},:}$ from $(A_{\ell + 1/d})_{:,\sk{c}}$ and $(A_{\ell + 1/d})_{\sk{c},:}$. Restricting attention to 2D for concreteness, we start at level $L$ with interactions between the DOFs $s_{L}$ as depicted in Figure \ref{fig:recon-err} (left).
\begin{figure}
 \includegraphics{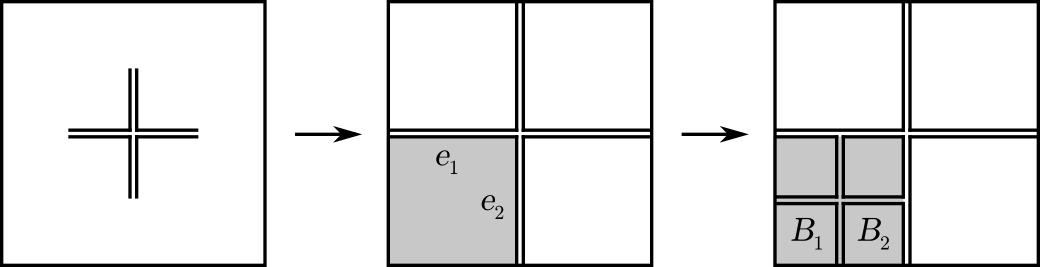}
 \caption{Matrix reconstruction from skeleton-skeleton interactions.}
 \label{fig:recon-err}
\end{figure}
By Lemma \ref{lem:2k-id}, un-skeletonizing each edge $c \in C_{L - 1/2}$ induces an error in the interactions between the edges $e_{1}$ and $e_{2}$ as labeled in the figure (center) of absolute magnitude $O(\epsilon)$. At the next level, un-skeletonizing the shaded cell $c \in C_{L - 1}$ which they bound then relies on the approximate interactions between $e_{1}$ and $e_{2}$. This spreads the $O(\epsilon)$ error over the reconstructed cell interactions, which is small for SCIs acting internally to each cell $c \in C_{L - 2}$ (omitting level $L - 3/2$ for simplicity) but not for kernel interactions between any two distinct cells $B_{1}$ and $B_{2}$ (right); indeed, the relative error for the latter is $O(N \epsilon)$. These corrupted interactions are then used for reconstruction at the next level and are eventually spread throughout the whole matrix. The same argument clearly holds in 3D.

This analysis suggests that the only fix is to skeletonize at effective precision $O(\epsilon / N)$ so that kernel interactions are accurately reconstructed. This is equivalent to ensuring that both scales in $Y_{c}$ are well approximated by the ID. Following this intuition, we decompose $Y_{c}$ as $Y_{c} = Y_{c}^{\krn} + Y_{c}^{\sch}$, where $Y_{c}^{\krn}$ consists purely of kernel interactions, and set $\rho_{c} \epsilon$ for $\rho_{c} = \min (1, \| Y_{c}^{\krn} \| / \| Y_{c}^{\sch} \|)$ as the local compression tolerance, which we note uses increased precision only when necessary.

The two-scale structure of $Y_{c}$ also enables an additional optimization as can be seen by studying the sparsity patterns of SCIs. Figure \ref{fig:split-comp} shows an example configuration in 2D after cell skeletonization at level $\ell$, which leaves a collection of edges at level $\ell + 1/2$, each composed of two half-edges consisting of skeletons from the two cells on either side (left).
\begin{figure}
 \includegraphics{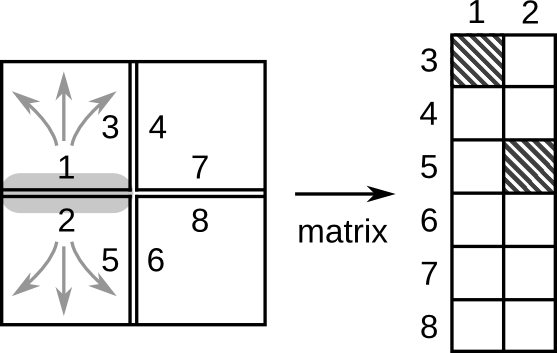}
 \caption{Sparsity pattern of SCIs. A reference domain configuration (left) is shown with each half-edge labeled from $1$--$8$. The edge of interest ($1$ and $2$) is outlined in gray along with all outgoing SCIs. The corresponding matrix view (right) shows these interactions (hatched) indexed by half-edge.}
 \label{fig:split-comp}
\end{figure}
Let $c = g_{1} \cup g_{2} \in C_{\ell + 1/2}$ be a given edge with indices partitioned by half-edge, and let $Y_{g_{j}}$ be the submatrix of $Y_{c}$ corresponding to $g_{j}$. Then $Y_{g_{1}}^{\sch}$ and $Y_{g_{2}}^{\sch}$ (analogously defined) have different nonzero structures, so $Y_{g_{1}}$ and $Y_{g_{2}}$ have large entries in different row blocks (right). The stable interpolation of $Y_{c}$ hence requires that all interpolation coefficients from one half-edge to the other be $O(1/N)$ since otherwise the reconstruction of, say, $Y_{g_{1}}$ will have large errors in rows where $Y_{g_{2}}^{\sch}$ is nonzero. As $N \to \infty$, these cross-interpolation coefficients must therefore vanish and the compression of $Y_{c}$ decouples into the compression of $Y_{g_{1}}$ and $Y_{g_{2}}$ separately. We enforce this asymptotic decoupling explicitly, which moreover provides an acceleration due to the cubic cost of the ID. The ID of $Y_{c}$ is then given by $\sk{c} = (\sk{g}_{1}, \sk{g}_{2})$, $\rd{c} = (\rd{g}_{1}, \rd{g}_{2})$, and $T_{c} = \diag (T_{g_{1}}, T_{g_{2}})$, where $g_{j} = \sk{g}_{j} \cup \rd{g}_{j}$ and $T_{g_{j}}$ define the ID of $Y_{g_{j}}$. We use the compression tolerance $\rho_{g_{j}} \epsilon$ with $\rho_{g_{j}} = \min (1, \| Y_{g_{j}}^{\krn} \| / \| Y_{g_{j}}^{\sch} \|)$ locally for each $g_{j}$.

In general, we define the subsets $\{ g_{j} \}$ algebraically according to the sparsity pattern of $Y_{c}^{\sch}$, which can be done using the matrix indicator function
\begin{align*}
 (\spy (A))_{ij} =
 \begin{cases}
  0, & A_{ij} = 0\\
  1, & A_{ij} \neq 0.
 \end{cases}
\end{align*}

\begin{lemma}
 Let $B = \spy (A)^{*} \spy (A)$ for some matrix $A$. Then $A_{:,i}$ and $A_{:,j}$ have the same sparsity pattern if and only $B_{ij} = \max (\| A_{:,i} \|_{0}, \| A_{:,j} \|_{0})$.
\end{lemma}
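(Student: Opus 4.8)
The plan is to translate the statement into the language of column supports. For each column index $i$, let $S_{i} = \{ k : A_{ki} \neq 0 \}$ be the support of $A_{:,i}$. By definition $\spy (A)_{ki} = 1$ exactly when $k \in S_{i}$, so
\[
 B_{ij} = (\spy (A)^{*} \spy (A))_{ij} = \sum_{k} \spy (A)_{ki} \spy (A)_{kj} = |S_{i} \cap S_{j}|,
\]
while $\| A_{:,i} \|_{0} = |S_{i}|$ and $\| A_{:,j} \|_{0} = |S_{j}|$. Moreover, $A_{:,i}$ and $A_{:,j}$ have the same sparsity pattern precisely when $S_{i} = S_{j}$. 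Thus the claim reduces to the purely set-theoretic assertion: for finite sets, $S_{i} = S_{j}$ if and only if $|S_{i} \cap S_{j}| = \max (|S_{i}|, |S_{j}|)$.

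For the forward direction I would simply note that if $S_{i} = S_{j}$, then $S_{i} \cap S_{j} = S_{i} = S_{j}$, so $B_{ij} = |S_{i}| = |S_{j}| = \max (|S_{i}|, |S_{j}|)$.

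For the converse, assume $|S_{i} \cap S_{j}| = \max (|S_{i}|, |S_{j}|)$ and, without loss of generality, $|S_{i}| \geq |S_{j}|$, so the right-hand side is $|S_{i}|$. Since $S_{i} \cap S_{j} \subseteq S_{i}$ and both are finite, $|S_{i} \cap S_{j}| = |S_{i}|$ forces $S_{i} \cap S_{j} = S_{i}$, i.e.\ $S_{i} \subseteq S_{j}$; hence $|S_{i}| \leq |S_{j}|$, which together with $|S_{i}| \geq |S_{j}|$ gives $|S_{i}| = |S_{j}|$. A containment between finite sets of equal cardinality is an equality, so $S_{i} = S_{j}$.

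There is no real obstacle here: the only genuine step is recognizing that $\spy (A)^{*} \spy (A)$ is the support-intersection (Gram-type) matrix, after which the biconditional is the trivial observation that two finite sets coincide iff their intersection is as large as the larger of the two. The one point worth double-checking is the appearance of $\max$ rather than $\min$ — with $\min$ the statement would fail (one support could be a proper subset of the other), and $\max$ is exactly what excludes that case. The degenerate cases need no special treatment: two identically zero columns share the empty pattern and give $B_{ij} = 0 = \max (0, 0)$, while a zero column paired with a nonzero one gives $B_{ij} = 0 < \max (|S_{i}|, |S_{j}|)$, consistent with the patterns differing.
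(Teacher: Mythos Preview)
Your proof is correct; the paper states this lemma without proof, so there is nothing to compare against beyond noting that your support-set reformulation is the natural and complete argument.
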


\subsection{Complexity Estimates}
Analysis of HIF-IE is impeded by the compression of SCIs, for which we do not have rigorous bounds on the interaction rank. Nonetheless, ample numerical evidence suggests that SCIs behave very similarly to standard kernel interactions. For the sake of analysis, we hence assume that the same rank estimates apply, from which we have \eqref{eqn:inter-rank-1d} for all $\ell \geq 1$ by reduction to 1D. We emphasize that this has yet to be proven, so all following results should formally be understood as conjectures, albeit ones with strong experimental support (Section \ref{sec:results}).

\begin{theorem}
 \label{thm:hifie}
 Assume that \eqref{eqn:inter-rank-1d} holds. Then the cost of constructing the factorization $F$ in \eqref{eqn:hifie2} or \eqref{eqn:hifie3} using HIF-IE with accelerated compression is $t_{f} = O(N)$, while that of applying $F$ or $F^{-1}$ is $t_{a/s} = O(N)$.
\end{theorem}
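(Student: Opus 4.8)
The plan is to mirror the structure of the proof of Theorem \ref{thm:rskelf}, counting the work level by level and summing a geometric-type series, but now with the skeleton size governed by the one-dimensional estimate \eqref{eqn:inter-rank-1d} rather than the higher-dimensional growth \eqref{eqn:inter-rank}. The key conceptual point, which we take as an assumption, is that after the cell skeletonization at each level the surviving DOFs live on the cell edges (in 2D) or faces (in 3D), and that the subsequent edge (and, in 3D, face-then-edge) skeletonization steps further collapse these to effectively a collection of decoupled one-dimensional problems. Concretely, I would first fix notation: let $n_\ell = O(2^{d\ell})$ be the number of DOFs nominally associated with a cell at level $\ell$, and let $k_\ell$ denote the typical skeleton size of an index set arising at the fractional levels. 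Under \eqref{eqn:inter-rank-1d}, $k_\ell = O(k\ell)$, and the number of index sets at level $\ell$ is $O(2^{d(L-\ell)})$, since each cell/face/edge partition still has $O(2^{d(L-\ell)})$ members of the partition up to a constant.

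Next I would bound the cost of a single skeletonization of an index set $c$. As in the proof of Theorem \ref{thm:rskelf}, with accelerated compression (Lemma \ref{lem:proxy-id}, \eqref{eqn:proxy-id}) and using that $|c^{\nbr}| = O(|c|)$ from the locality of SCIs discussed in Section \ref{sec:hifie:accel-comp}, forming the ID of $Y_c$ together with the associated local matrix operations of Lemma \ref{lem:skel} costs $O(|c|^3)$ for construction and $O(|c|^2)$ for each application of $F$ or $F^{-1}$. The crucial bookkeeping step is to track $|c|$ through the fractional levels: at the cell step of level $\ell$ the active DOFs entering a cell are the skeletons merged from the $2^d$ children at the previous integer level plus, for HIF-IE, the surrounding skeletons from the intervening fractional levels, all of which are $O(k_{\ell})$; and at the edge (and face) steps the active DOFs in each index set are likewise $O(k_{\ell})$ by the reduction-to-1D hypothesis. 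So in all cases $|c| = O(m^d + k_\ell) = O(m^d + k\ell)$. Summing, the construction cost is
\begin{align*}
 t_f = O(2^{dL} m^{3d}) + C \sum_{\ell = 1}^{L} 2^{d(L - \ell)} O\bigl((k\ell)^3\bigr),
\end{align*}
where $C$ absorbs the constant number (one in RSF, $d$ in HIF-IE) of fractional sub-steps per level, and $t_{a/s}$ is the same with cubes replaced by squares.

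Finally I would evaluate these sums. The series $\sum_{\ell \geq 1} 2^{d(L-\ell)} \ell^3 = 2^{dL} \sum_{\ell \geq 1} 2^{-d\ell} \ell^3$ converges to $2^{dL} \cdot O(1)$ since $\sum_{\ell} \ell^3 2^{-d\ell} < \infty$ for $d \geq 1$; similarly for the $\ell^2$ version. Hence $t_f = O(2^{dL}(m^{3d} + k^3))$ and $t_{a/s} = O(2^{dL}(m^{2d} + k^2))$. Choosing $m^d = O(k)$ as in Corollary \ref{cor:complexity-1d} balances the two terms and gives $N = n^d = (2^L m)^d = O(2^{dL} k)$, whence $t_f = O(2^{dL} k^3) = O(Nk^2) = O(N)$ and $t_{a/s} = O(2^{dL} k^2) = O(Nk) = O(N)$, treating $k$ as a constant for fixed precision $\epsilon$. (If one prefers to keep $k$ explicit, the bounds are $t_f = O(Nk^2)$ and $t_{a/s} = O(Nk)$, exactly paralleling Corollary \ref{cor:complexity-1d}.) The main obstacle is not in this summation — which is entirely routine — but in justifying the input hypothesis that \eqref{eqn:inter-rank-1d} governs the skeleton sizes, i.e.\ that the SCIs accumulated from non-nested cell/edge/face partitions do not inflate the ranks beyond the one-dimensional estimate; as the text emphasizes, this is supported by experiment but not proven, so the theorem is stated conditionally on it. A secondary point requiring care is confirming that $|c^{\nbr}| = O(|c|)$ persists at the fractional levels, which follows from the locality of SCIs established in Section \ref{sec:hifie:accel-comp}, and that the constant number of fractional sub-levels per integer level does not change the order of the geometric sum.
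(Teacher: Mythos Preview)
Your proposal is correct and follows essentially the same approach as the paper: the paper's proof is a one-line invocation of Corollary~\ref{cor:complexity-1d}, noting only that the sum must now also run over the fractional levels, and your argument spells out precisely this computation in detail. The level-by-level bookkeeping, the geometric summation $\sum_{\ell} 2^{-d\ell}\ell^{3} = O(1)$, and the balancing choice $m^{d} = O(k)$ all match the paper's intended reasoning exactly.
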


\begin{proof}
 This is essentially just a restatement of Corollary \ref{cor:complexity-1d} (but with the sum now taken also over fractional levels).
\end{proof}

\begin{corollary}
 \label{cor:hifie-2k}
 For second-kind IEs,
 \begin{align*}
  t_{f} =
  \begin{cases}
   O(N \log N), & d = 2\\
   O(N \log^{6} N), & d = 3,
  \end{cases} \qquad t_{a/s} =
  \begin{cases}
   O(N \log \log N), & d = 2\\
   O(N \log^{2} N), & d = 3.
  \end{cases}
 \end{align*}
\end{corollary}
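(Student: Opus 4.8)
The plan is to re-run the complexity analysis behind Theorem~\ref{thm:hifie} — that is, the level-by-level counting argument of Theorem~\ref{thm:rskelf} and Corollary~\ref{cor:complexity-1d}, now also summed over fractional levels — but with the rank and per-skeletonization costs adjusted to reflect the second-kind modifications of Section~\ref{sec:hifie:2k-mod}. The key point is that the algorithm itself is unchanged; the only difference is that each ID is now carried out at the reduced local tolerance $\rho_{c}\epsilon = O(\epsilon/N)$ prescribed there (cf.\ Lemma~\ref{lem:2k-id}). So I would substitute $\epsilon \mapsto \epsilon/N$ wherever a precision appears and re-examine the two inputs to the counting argument: the skeleton size $k_{\ell}$ and the cost of skeletonizing one index set.

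First, the rank estimate. Since the numerical rank of the relevant interaction blocks to tolerance $\delta$ grows like $\log(1/\delta)$ (geometric singular-value decay), the reduction to one dimension now yields \eqref{eqn:inter-rank-1d} with $k = O(\log(N/\epsilon)) = O(\log N)$ for fixed $\epsilon$, in place of $k = O(1)$. Second, the accelerated-compression step of Section~\ref{sec:hifie:accel-comp}: the proxy surface must now reproduce Green's-theorem interactions to accuracy $O(\epsilon/N)$, so standard multipole estimates force $|c^{\eqv}| = O(\log^{d-1}(N/\epsilon)) = O(\log^{d-1} N)$ equivalent DOFs — one logarithmic factor in 2D, but $O(\log^{2} N)$ in 3D, which is what makes the 3D bounds so much larger. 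With these in hand, the ID of $Y_{c}$ in \eqref{eqn:proxy-id} costs $O\bigl((|c^{\nbr}| + |c^{\eqv}|)\,|c|\,|\sk{c}|\bigr)$ and the remaining local matrix work costs $O(|c|^{3})$, with $|c|,|c^{\nbr}| = O(k_{\ell})$ (using the half-edge/half-face decoupling of Section~\ref{sec:hifie:2k-mod} to keep these proportional to the per-block size).

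Substituting into the cost formulas — $t_{f}$ as a sum of per-level terms $O\bigl((k_{\ell} + \log^{d-1} N)\,k_{\ell}^{2}\bigr)$ weighted by the number $O(2^{d(L-\ell)})$ of index sets at level $\ell$, plus the leaf term $O(2^{dL} m^{3d})$, and $t_{a/s}$ similarly with $O(k_{\ell}^{2})$ per level — I would take $m = O(1)$, carry out the resulting geometric sums over $\ell$, and collect the logarithmic factors to obtain the claimed estimates in $d = 2$ and $d = 3$.

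The main obstacle is exactly this last step: the bookkeeping of logarithmic factors. In the first-kind case each level contributes a constant and everything telescopes cleanly; here the per-level cost mixes a rank contribution (as $k_{\ell}^{2}$ or $k_{\ell}^{3}$) with an additive proxy contribution ($\log^{d-1} N$), and the level at which the sum concentrates — and hence the final power of $\log N$ — depends on which of these dominates, which differs between $t_{f}$ and $t_{a/s}$ and between two and three dimensions. A further point one has to be willing to grant (consistent with the numerical evidence and with the rank assumption already invoked for Theorem~\ref{thm:hifie}) is that $O(\epsilon/N)$ really is the right effective precision — i.e., that the two-scale, decoupled ID of Section~\ref{sec:hifie:2k-mod} attains it without a further loss — since every $\log N$ in the conclusion is traceable to this choice. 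Granting that, the estimates are a mechanical, if delicate, computation.
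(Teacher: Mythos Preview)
Your proposal has a genuine gap: it misses the two-phase structure that drives the paper's proof. You assume the 1D rank estimate $k_{\ell} = O(\ell \log N)$ holds uniformly across all levels and then sum. But at low levels the edges (in 2D) and faces/edges (in 3D) are themselves smaller than this putative skeleton size, so the fractional-level compression accomplishes nothing and the algorithm effectively runs as RSF. The paper's proof makes this explicit by introducing a crossover level $\lambda$, determined by balancing the geometric size of an edge or face against its skeleton size at precision $O(\epsilon/N)$: in 2D, $2^{\lambda} \sim \lambda \log N$ gives $\lambda \sim \log\log N$; in 3D one balances faces and edges separately and gets $\lambda \sim 2\log\log N$. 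The total cost then splits into an RSF-like sum over $\ell \le \lambda$ and a HIF-IE sum (Corollary~\ref{cor:complexity-1d} with $k = O(\log N)$) over $\ell > \lambda$.

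This matters because the dominant contribution --- and in particular the $\log^{6} N$ in 3D --- comes from the RSF phase: $\sum_{\ell \le \lambda} 2^{d(L-\ell)} O(2^{3(d-1)\ell}) = O(2^{dL} 2^{(2d-3)\lambda})$, which for $d=3$ and $2^{\lambda} \sim \log^{2} N$ gives $O(N \log^{6} N)$. Your attribution of the large log powers to the proxy size $|c^{\eqv}| = O(\log^{d-1} N)$ is a red herring; the paper's argument does not invoke the proxy growth at all, and summing your proposed per-level cost $O((k_{\ell} + \log^{d-1} N)k_{\ell}^{2})$ with $k_{\ell} = O(\ell \log N)$ does not reproduce the stated exponents in either dimension. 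The missing ingredient is precisely the identification of $\lambda$ and the recognition that below it one is paying full RSF cost on geometrically growing fronts.
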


\begin{proof}
 According to the modifications of Section \ref{sec:hifie:2k-mod}, there are now two effective ID tolerances: $\epsilon$ for all $c \in C_{\ell}$ such that $Y_{c}^{\sch} = 0$ and $O(\epsilon / N)$ otherwise. The former is used for all initial levels $\ell \leq \lambda$ before SCIs have become widespread (i.e., before any meaningful dimensional reduction has occurred), and the latter for all $\ell > \lambda$. But using precision $O(\epsilon / N)$ yields a rank estimate with constant of proportionality $O(\log^{\delta} N)$, where $\delta$ is the intrinsic dimension of the DOF cluster $c$ \cite{greengard:1987:j-comput-phys,greengard:1997:acta-numer}, so the amount of compression depends on $N$. Thus, $\lambda = \lambda (N)$ and our first task is to determine its form.

 The crossover level $\lambda$ can be obtained by balancing the typical size $|c|$ of an edge (2D and 3D) or face (3D only) with its skeleton size $|\sk{c}|$. In 2D, this is $2^{\lambda} \sim \lambda \log N$, where the left-hand side gives the size of an edge at level $\lambda$, and the right-hand side the estimated rank for SCI compression. Therefore, $\lambda \sim \log \log N$.

 In 3D, there are two crossover levels $\lambda_{1}$ and $\lambda_{2}$ corresponding to face and edge compression, respectively, with $\lambda = \max (\lambda_{1}, \lambda_{2})$:
 \begin{align*}
  2^{2 \lambda_{1}} \sim 2^{\lambda_{1}} \log^{2} N, \quad 2^{\lambda_{2}} \sim \lambda_{2} \log N.
 \end{align*}
 Hence, $\lambda_{1} \sim 2 \log \log N$ and $\lambda_{2} \sim \log \log N$, so $\lambda \sim 2 \log \log N$.

 The cost of constructing $F$ for second-kind IEs is then
 \begin{align*}
  t_{f} = O(2^{dL} m^{3d}) + \sum_{\ell = 0}^{\lambda} 2^{d(L - \ell)} O(2^{3(d - 1) \ell}) + \sumprime_{\ell = \lambda}^{L} O(2^{d(L - \ell)} k_{\ell}^{3}),
 \end{align*}
 where prime notation denotes summation over all levels, both integer and fractional, and $k_{\ell}$ is as given in \eqref{eqn:inter-rank-1d} with $k = O(\log N)$. The first sum corresponds to running RSF on the initial levels and reduces to
 \begin{align*}
  \sum_{\ell = 0}^{\lambda} 2^{d(L - \ell)} O(2^{3(d - 1) \ell}) =
  \begin{cases}
   O(N \log N), & d = 2\\
   O(N \log^{6} N), & d = 3,
  \end{cases}
 \end{align*}
 while the second can be interpreted as the cost of the standard HIF-IE (without modification) applied to the remaining
 \begin{align*}
  O(2^{-\lambda} N) =
  \begin{cases}
   O(N / \log N), & d = 2\\
   O(N / \log^{2} N), & d = 3
  \end{cases}
 \end{align*}
 DOFs at uniform precision $O(\epsilon / N)$. By Corollary \ref{cor:complexity-1d}, this is
 \begin{align*}
  \sumprime_{\ell = \lambda}^{L} O(2^{d(L - \ell)} k_{\ell}^{3}) =
  \begin{cases}
   O(N \log N), & d = 2\\
   O(N), & d = 3,
  \end{cases}
 \end{align*}
 so, adding all terms, we derive $t_{f}$ as claimed.

 A similar argument for
 \begin{align*}
  t_{a/s} = O(2^{dL} m^{2d}) + \sum_{\ell = 0}^{\lambda} 2^{d(L - \ell)} O(2^{2(d - 1) \ell}) + \sumprime_{\ell = \lambda}^{L} O(2^{d(L - \ell)} k_{\ell}^{2})
 \end{align*}
 completes the proof.
\end{proof}

\begin{remark}
 Like Theorem \ref{thm:rskelf} for RSF, the parameter $d$ in Corollary \ref{cor:hifie-2k} can also be regarded as the intrinsic dimension.
\end{remark}

\section{Numerical Results}
\label{sec:results}
In this section, we demonstrate the efficiency of HIF-IE by reporting numerical results for some benchmark problems in 2D and 3D. All algorithms and examples were implemented in MATLAB and are freely available at \url{https://github.com/klho/FLAM/}. In what follows, we refer to RSF as \alg{rskelf2} in 2D and \alg{rskelf3} in 3D. Similarly, we call HIF-IE \alg{hifie2} and \alg{hifie3}, respectively, with \alg{hifie2x} and \alg{hifie3x} denoting their second-kind IE counterparts. All codes are fully adaptive and built on quadtrees in 2D and octrees in 3D. The average block size $|c|$ at level $0$ (and hence the tree depth $L$) was chosen so that $|c| \sim 2|\sk{c}|$. In select cases, the first few fractional levels of HIF-IE were skipped to optimize the running time. Symmetry was exploited wherever possible by compressing
\begin{align*}
 Y_{c}' =
 \begin{bmatrix}
  A_{c^{\nbr},c}\\
  Y_{c^{\eqv},c}
 \end{bmatrix}
\end{align*}
instead of the full matrix $Y_{c}$ in \eqref{eqn:proxy-id}, which reduces the cost by about a factor of $2$. Diagonal blocks, i.e., $A_{pp}$ in Lemma \ref{lem:sparse-elim}, were factored using the (partially pivoted) LDL decomposition if $A$ is symmetric and the LU decomposition otherwise.

For each example, the following, if applicable, are given:
\begin{itemize}
 \item
  $\epsilon$: base relative precision of the ID;
 \item
  $N$: total number of DOFs in the problem;
 \item
  $|s_{L}|$: number of active DOFs remaining at the highest level;
 \item
  $t_{f}$: wall clock time for constructing the factorization $F$ in seconds;
 \item
  $m_{f}$: memory required to store $F$ in GB;
 \item
  $t_{a/s}$: wall clock time for applying $F$ or $F^{-1}$ in seconds;
 \item
  $e_{a}$: {\it a posteriori} estimate of $\| A - F \| / \| A \|$ (see below);
 \item
  $e_{s}$: {\it a posteriori} estimate of $\| I - A F^{-1} \| \geq \| A^{-1} - F^{-1} \| / \| A^{-1} \|$;
 \item
  $n_{i}$: number of iterations to solve \eqref{eqn:linear-system} using GMRES with preconditioner $F^{-1}$ to a tolerance of $10^{-12}$, where $f$ is a standard uniform random vector (ill-conditioned systems only).
\end{itemize}
The operator errors $e_{a}$ and $e_{s}$ were estimated using power iteration with a standard uniform random start vector \cite{dixon:1983:siam-j-numer-anal,kuczynski:1992:siam-j-matrix-anal-appl} and a convergence criterion of $10^{-2}$ relative precision in the matrix norm. This procedure requires the application of both $A$ and $A^{*}$, which for translation-invariant kernels was done using fast Fourier convolution \cite{brigham:1988:prentice-hall} and for non--translation-invariant kernels using an ID-based kernel-independent FMM \cite{martinsson:2007:siam-j-sci-comput,pan:2012:radio-sci} at precision $10^{-15}$. The same methods were also used to apply $A$ when solving \eqref{eqn:linear-system} iteratively.

For simplicity, all IEs were discretized using a piecewise constant collocation method as in Section \ref{sec:rskelf}. Certain near-field interactions (to be defined for each case) were computed using adaptive quadrature, while all other interactions were handled as simple one-point approximations, e.g., $K_{ij} = K(\| x_{i} - x_{j} \|) h^{d}$ in \eqref{eqn:kernel-integral}.

All computations were performed in MATLAB R2010b on a single core (without parallelization) of an Intel Xeon E7-4820 CPU at 2.0 GHz on a 64-bit Linux server with 256 GB of RAM.

\subsection{Two Dimensions}
We begin first in 2D, where we present three examples.

\subsubsection*{Example 1}
Consider \eqref{eqn:ie} with $a(x) \equiv 0$, $b(x) \equiv c(x) \equiv 1$, $K(r) = -1 / (2 \pi) \log r$, and $\Omega = (0, 1)^{2}$, i.e., a first-kind volume IE in the unit square, discretized over a uniform $n \times n$ grid. The diagonal entries $K_{ii}$ are computed adaptively, while all $K_{ij}$ for $i \neq j$ are approximated using one-point quadratures. We factored the resulting matrix $A$ using both \alg{rskelf2} and \alg{hifie2} at $\epsilon = 10^{-3}$, $10^{-6}$, and $10^{-9}$. The data are summarized in Tables \ref{tab:ie_square1-f} and \ref{tab:ie_square1-a} with scaling results shown in Figure \ref{fig:ie_square1}.

\begin{table}
 \caption{Factorization results for Example 1.}
 \label{tab:ie_square1-f}
 \scriptsize
 \begin{tabular}{cr|rcc|rcc}
  \toprule
  & & \multicolumn{3}{|c|}{\alg{rskelf2}} & \multicolumn{3}{|c}{\alg{hifie2}}\\
  $\epsilon$ & \multicolumn{1}{c|}{$N$} & \multicolumn{1}{|c}{$|s_{L}|$} & $t_{f}$ & $m_{f}$ & \multicolumn{1}{|c}{$|s_{L}|$} & $t_{f}$ & $m_{f}$\\
  \midrule
  \multirow{3}{*}{$10^{-3}$} &  $512^{2}$ &  $2058$ & $1.9$e$+2$ & $7.7$e$-1$ &  $67$ & $6.2$e$+1$ & $3.0$e$-1$\\
                             & $1024^{2}$ &  $4106$ & $1.4$e$+3$ & $3.6$e$+0$ &  $67$ & $2.5$e$+2$ & $1.2$e$+0$\\
                             & $2048^{2}$ &  $6270$ & $6.6$e$+3$ & $1.4$e$+1$ &  $70$ & $1.0$e$+3$ & $4.7$e$+0$\\
  \midrule
  \multirow{3}{*}{$10^{-6}$} &  $512^{2}$ &  $3430$ & $7.7$e$+2$ & $1.8$e$+0$ & $373$ & $2.7$e$+2$ & $8.5$e$-1$\\
                             & $1024^{2}$ &  $5857$ & $4.6$e$+3$ & $7.7$e$+0$ & $428$ & $1.2$e$+3$ & $3.5$e$+0$\\
                             & $2048^{2}$ & $11317$ & $3.0$e$+4$ & $3.3$e$+1$ & $455$ & $4.8$e$+3$ & $1.4$e$+1$\\
  \midrule
  \multirow{3}{*}{$10^{-9}$} &  $512^{2}$ &  $4162$ & $1.2$e$+3$ & $2.3$e$+0$ & $564$ & $4.3$e$+2$ & $1.2$e$+0$\\
                             & $1024^{2}$ &  $8264$ & $1.0$e$+4$ & $1.1$e$+1$ & $686$ & $2.1$e$+3$ & $4.8$e$+0$\\
                             & $2048^{2}$ & $16462$ & $8.3$e$+4$ & $5.2$e$+1$ & $837$ & $9.1$e$+3$ & $1.9$e$+1$\\
  \bottomrule
 \end{tabular}
\end{table}

\begin{table}
 \caption{Matrix application results for Example 1.}
 \label{tab:ie_square1-a}
 \scriptsize
 \begin{tabular}{cr|c|cccr}
  \toprule
  & & \multicolumn{1}{|c|}{\alg{rskelf2}} & \multicolumn{4}{|c}{\alg{hifie2}}\\
  $\epsilon$ & \multicolumn{1}{c|}{$N$} & $t_{a/s}$ & $t_{a/s}$ & $e_{a}$ & $e_{s}$ & \multicolumn{1}{c}{$n_{i}$}\\
  \midrule
  \multirow{3}{*}{$10^{-3}$} &  $512^{2}$ & $7.2$e$-1$ & $5.2$e$-1$ & $3.4$e$-04$ & $1.2$e$-1$ &  $9$\\
                             & $1024^{2}$ & $3.2$e$+0$ & $2.1$e$+0$ & $3.8$e$-04$ & $1.6$e$-1$ & $10$\\
                             & $2048^{2}$ & $1.3$e$+1$ & $1.2$e$+1$ & $4.3$e$-04$ & $1.6$e$-1$ & $10$\\
  \midrule
  \multirow{3}{*}{$10^{-6}$} &  $512^{2}$ & $9.2$e$-1$ & $9.7$e$-1$ & $3.8$e$-07$ & $5.0$e$-4$ &  $3$\\
                             & $1024^{2}$ & $4.2$e$+0$ & $4.1$e$+0$ & $3.3$e$-07$ & $6.5$e$-4$ &  $4$\\
                             & $2048^{2}$ & $2.1$e$+1$ & $1.5$e$+1$ & $5.0$e$-07$ & $4.1$e$-4$ &  $4$\\
  \midrule
  \multirow{3}{*}{$10^{-9}$} &  $512^{2}$ & $1.1$e$+0$ & $8.1$e$-1$ & $2.8$e$-10$ & $4.3$e$-7$ &  $2$\\
                             & $1024^{2}$ & $4.9$e$+0$ & $3.5$e$+0$ & $2.7$e$-10$ & $6.8$e$-7$ &  $2$\\
                             & $2048^{2}$ & $2.8$e$+1$ & $1.4$e$+1$ & $5.7$e$-10$ & $1.1$e$-6$ &  $2$\\
  \bottomrule
 \end{tabular}
\end{table}

\begin{figure}
 \includegraphics{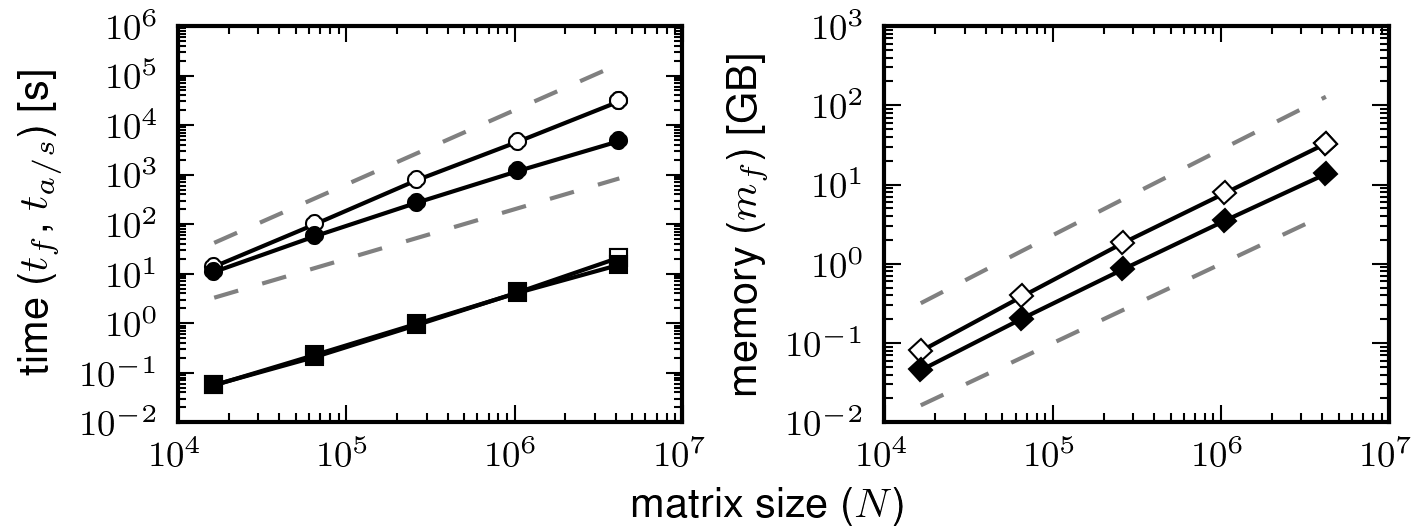}
 \caption{Scaling results for Example 1. Wall clock times $t_{f}$ ($\circ$) and $t_{a/s}$ ($\Box$) and storage requirements $m_{f}$ ($\diamond$) are shown for \alg{rskelf2} (white) and \alg{hifie2} (black) at precision $\epsilon = 10^{-6}$. Included also are reference scalings (gray dashed lines) of $O(N)$ and $O(N^{3/2})$ (left, from bottom to top), and $O(N)$ and $O(N \log N)$ (right). The lines for $t_{a/s}$ (bottom left) lie nearly on top of each other.}
\label{fig:ie_square1}
\end{figure}

It is evident that $|s_{L}| \sim k_{L}$ behaves as predicted, with HIF-IE achieving significant compression over RSF. Consequently, we find strong support for asymptotic complexities consistent with Theorems \ref{thm:rskelf} and \ref{thm:hifie}. For all problem sizes tested, $t_{f}$ and $m_{f}$ are always smaller for HIF-IE, though $t_{a/s}$ is quite comparable. This is because $t_{a/s}$ is dominated by memory access (at least in our current implementation), which also explains its relative insensitivity to $\epsilon$. Furthermore, we observe that $t_{a/s} \ll t_{f}$ for both methods, which makes them ideally suited to systems involving multiple right-hand sides.

The forward approximation error $e_{a} = O(\epsilon)$ for all $N$ and seems to increase only very mildly, if at all, with $N$. This indicates that the local accuracy of the ID provides a good estimate of the overall accuracy of the algorithm, which is not easy to prove since the multilevel matrix factors constituting $F$ are not unitary. On the other hand, we expect the inverse approximation error to scale as $e_{s} = O(\kappa (A) e_{a})$, where $\kappa (A) = \| A \| \| A^{-1} \|$ is the condition number of $A$, and indeed we see that $e_{s}$ is much larger due to the ill-conditioning of the first-kind system. When using $F^{-1}$ to precondition GMRES, however, the number of iterations required is always very small. This indicates that $F^{-1}$ is a highly effective preconditioner.

\subsubsection*{Example 2}
Consider now the same setup as in Example 1 but with $a(x) \equiv 1$. This gives a well-conditioned second-kind IE, which we factored using \alg{rskelf2}, \alg{hifie2}, and \alg{hifie2x}. The data are summarized in Tables \ref{tab:ie_square2-f} and \ref{tab:ie_square2-a} with scaling results in Figure \ref{fig:ie_square2}.

\begin{table}
 \caption{Factorization results for Example 2.}
 \label{tab:ie_square2-f}
 \scriptsize
 \begin{tabular}{cr|rcc|rcc|rcc}
  \toprule
  & & \multicolumn{3}{|c|}{\alg{rskelf2}} & \multicolumn{3}{|c}{\alg{hifie2}} & \multicolumn{3}{|c}{\alg{hifie2x}}\\
  $\epsilon$ & \multicolumn{1}{c|}{$N$} & \multicolumn{1}{|c}{$|s_{L}|$} & $t_{f}$ & $m_{f}$ & \multicolumn{1}{|c}{$|s_{L}|$} & $t_{f}$ & $m_{f}$ & \multicolumn{1}{|c}{$|s_{L}|$} & $t_{f}$ & $m_{f}$\\
  \midrule
  \multirow{3}{*}{$10^{-3}$} &  $512^{2}$ &  $2058$ & $1.9$e$+2$ & $7.7$e$-1$ &  $108$ & $6.8$e$+1$ & $3.5$e$-1$ &  $376$ & $1.1$e$+2$ & $5.1$e$-1$\\
                             & $1024^{2}$ &  $4106$ & $1.4$e$+3$ & $3.6$e$+0$ &  $135$ & $2.8$e$+2$ & $1.4$e$+0$ &  $456$ & $5.3$e$+2$ & $2.2$e$+0$\\
                             & $2048^{2}$ &  $6270$ & $6.6$e$+3$ & $1.4$e$+1$ &  $172$ & $1.2$e$+3$ & $5.7$e$+0$ &  $522$ & $2.4$e$+3$ & $9.4$e$+0$\\
  \midrule
  \multirow{3}{*}{$10^{-6}$} &  $512^{2}$ &  $3430$ & $7.7$e$+2$ & $1.8$e$+0$ &  $475$ & $2.2$e$+2$ & $8.8$e$-1$ &  $804$ & $4.7$e$+2$ & $1.4$e$+0$\\
                             & $1024^{2}$ &  $5857$ & $4.7$e$+3$ & $7.7$e$+0$ &  $580$ & $9.1$e$+2$ & $3.4$e$+0$ &  $962$ & $2.2$e$+3$ & $5.7$e$+0$\\
                             & $2048^{2}$ & $11317$ & $3.0$e$+4$ & $3.3$e$+1$ &  $614$ & $3.6$e$+3$ & $1.4$e$+1$ & $1115$ & $9.6$e$+3$ & $2.3$e$+1$\\
  \midrule
  \multirow{3}{*}{$10^{-9}$} &  $512^{2}$ &  $4162$ & $1.2$e$+3$ & $2.3$e$+0$ & $1030$ & $6.4$e$+2$ & $1.5$e$+0$ & $1087$ & $6.7$e$+2$ & $1.7$e$+0$\\
                             & $1024^{2}$ &  $8264$ & $1.0$e$+4$ & $1.1$e$+1$ & $1241$ & $3.2$e$+3$ & $6.3$e$+0$ & $1381$ & $3.6$e$+3$ & $7.2$e$+0$\\
                             & $2048^{2}$ & $16462$ & $8.2$e$+4$ & $5.2$e$+1$ & $1583$ & $1.5$e$+4$ & $2.6$e$+1$ & $1697$ & $1.8$e$+4$ & $3.1$e$+1$\\
  \bottomrule
 \end{tabular}
\end{table}

\begin{table}
 \caption{Matrix application results for Example 2.}
 \label{tab:ie_square2-a}
 \scriptsize
 \begin{tabular}{cr|c|ccc|ccc}
  \toprule
  & & \multicolumn{1}{|c|}{\alg{rskelf2}} & \multicolumn{3}{|c}{\alg{hifie2}} & \multicolumn{3}{|c}{\alg{hifie2x}}\\
  $\epsilon$ & \multicolumn{1}{c|}{$N$} & $t_{a/s}$ & $t_{a/s}$ & $e_{a}$ & $e_{s}$ & $t_{a/s}$ & $e_{a}$ & $e_{s}$\\
  \midrule
  \multirow{3}{*}{$10^{-3}$} &  $512^{2}$ & $7.2$e$-1$ & $5.4$e$-1$ & $7.8$e$-2$ & $8.5$e$-2$ & $5.3$e$-1$ & $2.6$e$-04$ & $2.9$e$-4$\\
                             & $1024^{2}$ & $3.3$e$+0$ & $2.3$e$+0$ & $8.3$e$-2$ & $9.1$e$-2$ & $2.4$e$+0$ & $2.7$e$-04$ & $3.0$e$-4$\\
                             & $2048^{2}$ & $1.1$e$+1$ & $1.2$e$+1$ & $9.8$e$-2$ & $1.1$e$-1$ & $1.2$e$+1$ & $8.0$e$-04$ & $8.7$e$-4$\\
  \midrule
  \multirow{3}{*}{$10^{-6}$} &  $512^{2}$ & $1.2$e$+0$ & $9.6$e$-1$ & $4.1$e$-4$ & $4.4$e$-4$ & $1.0$e$+0$ & $5.9$e$-07$ & $6.7$e$-7$\\
                             & $1024^{2}$ & $5.1$e$+0$ & $3.3$e$+0$ & $8.2$e$-4$ & $9.0$e$-4$ & $4.5$e$+0$ & $9.3$e$-07$ & $1.0$e$-6$\\
                             & $2048^{2}$ & $1.8$e$+1$ & $1.2$e$+1$ & $3.7$e$-3$ & $4.1$e$-3$ & $1.7$e$+1$ & $1.6$e$-06$ & $1.8$e$-6$\\
  \midrule
  \multirow{3}{*}{$10^{-9}$} &  $512^{2}$ & $1.4$e$+0$ & $8.9$e$-1$ & $3.0$e$-7$ & $3.4$e$-7$ & $1.2$e$+0$ & $2.8$e$-10$ & $3.2$e$-10$\\
                             & $1024^{2}$ & $5.4$e$+0$ & $3.7$e$+0$ & $8.4$e$-7$ & $9.6$e$-7$ & $5.0$e$+0$ & $3.5$e$-10$ & $3.9$e$-10$\\
                             & $2048^{2}$ & $2.5$e$+1$ & $1.5$e$+1$ & $1.8$e$-6$ & $2.0$e$-6$ & $1.8$e$+1$ & $1.1$e$-09$ & $1.2$e$-09$\\
  \bottomrule
 \end{tabular}
\end{table}

\begin{figure}
 \includegraphics{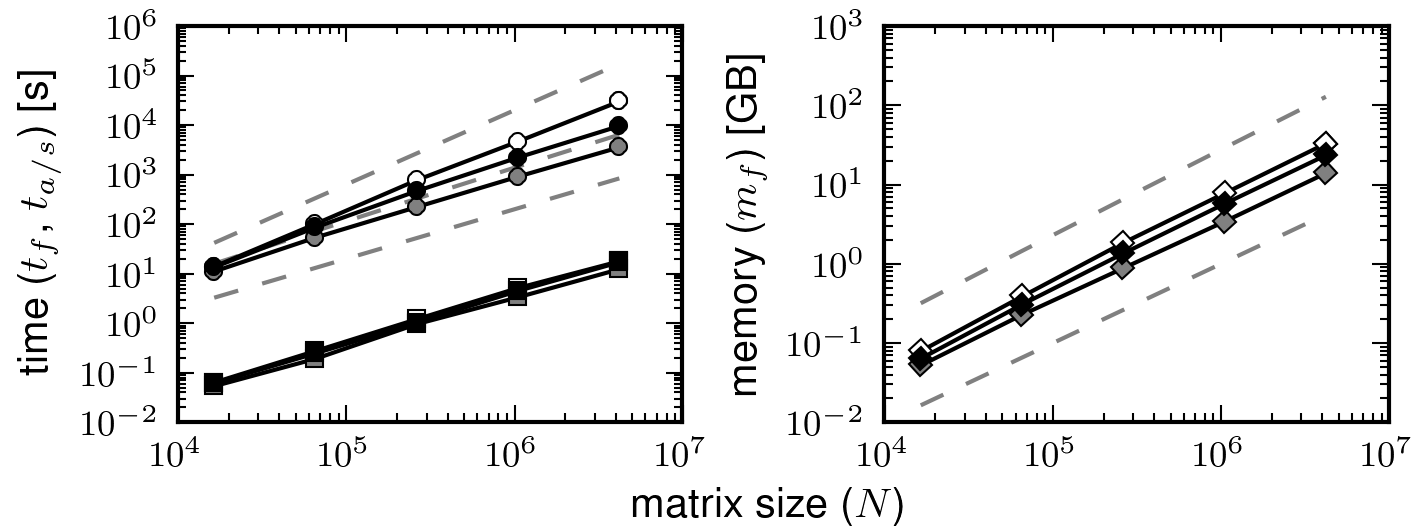}
 \caption{Scaling results for Example 2, comparing \alg{rskelf2} (white), \alg{hifie2} (gray), and \alg{hifie2x} (black) at precision $\epsilon = 10^{-6}$. Included also are reference scalings of $O(N)$, $O(N \log N)$, and $O(N^{3/2})$ (left); and $O(N)$ and $O(N \log N)$ (right). All other notation as in Figure \ref{fig:ie_square1}.}
 \label{fig:ie_square2}
\end{figure}

As expected, results for \alg{rskelf2} are essentially the same as those in Example 1 since the off-diagonal interactions at each level are identical. We also see the breakdown of \alg{hifie2}, which still has linear complexity but fails to properly approximate $A$ as predicted in Section \ref{sec:hifie:2k-mod}. This is remedied by \alg{hifie2x}, which achieves $e_{a}, e_{s} = O(\epsilon)$ but with a slight increase in cost. In particular, it appears to scale somewhat faster than linearly but remains consistent with Corollary \ref{cor:hifie-2k}.

\subsubsection*{Example 3}
We then turn to the Lippmann-Schwinger equation
\begin{align*}
 \sigma (x) + k^{2} \int_{\Omega} K(\| x - y \|) \omega (y) \sigma (y) \, d \Omega (y) = f(x), \quad x \in \Omega = (0, 1)^{2}
\end{align*}
for Helmholtz scattering, where $k = 2 \pi \kappa$ is the frequency of the incoming wave with $\kappa$ the number of wavelengths in $\Omega$; $K(r) = (i/4) H_{0}^{(1)} (kr)$ is the fundamental solution of the associated Helmholtz equation satisfying the Sommerfeld radiation condition, where $i$ is the imaginary unit and $H_{0}^{(1)} (\cdot)$ is the zeroth order Hankel function of the first kind; and $\omega (x)$ is a continuous function representing the scatterer. We refer the interested reader to \cite{colton:1992:springer} for details. Assuming that $\omega (x) \geq 0$, this can be symmetrized by the change of variables $u(x) = \sqrt{\omega(x)} \sigma (x)$ as
\begin{multline}
 u(x) + k \sqrt{\omega (x)} \int_{\Omega} K(\| x - y \|) \left[ k \sqrt{\omega (y)} \right] u(y) \, d \Omega (y) = \sqrt{\omega (x)} f(x),\\
 x \in \Omega,
 \label{eqn:lippmann-schwinger}
\end{multline}
i.e., \eqref{eqn:ie} with $a(x) \equiv 1$ and $b(x) \equiv c(x) = k \sqrt{\omega (x)}$. We took a Gaussian bump $\omega (x) = \exp (-32(x - x_{0})^{2})$ for $x_{0} = (1/2, 1/2)$ as the scatterer and discretized \eqref{eqn:lippmann-schwinger} using a uniform grid with quadratures as computed in Example 1. The frequency $k$ was increased with $n = \sqrt{N}$ to keep the number of DOFs per wavelength fixed at $32$. Data for \alg{rskelf2}, \alg{hifie2}, and \alg{hifie2x} with $\kappa = 8$, $16$, and $32$ at $\epsilon = 10^{-6}$ are shown in Tables \ref{tab:ie_square3-f} and \ref{tab:ie_square3-a}.

\begin{table}
 \caption{Factorization results for Example 3.}
 \label{tab:ie_square3-f}
 \scriptsize
 \begin{tabular}{crr|rcc|rcc|rcc}
  \toprule
  & & & \multicolumn{3}{|c|}{\alg{rskelf2}} & \multicolumn{3}{|c}{\alg{hifie2}} & \multicolumn{3}{|c}{\alg{hifie2x}}\\
  $\epsilon$ & \multicolumn{1}{c}{$N$} & \multicolumn{1}{c|}{$\kappa$} & \multicolumn{1}{|c}{$|s_{L}|$} & $t_{f}$ & $m_{f}$ & \multicolumn{1}{|c}{$|s_{L}|$} & $t_{f}$ & $m_{f}$ & \multicolumn{1}{|c}{$|s_{L}|$} & $t_{f}$ & $m_{f}$\\
  \midrule
  \multirow{3}{*}{$10^{-6}$} &  $256^{2}$ &  $8$ & $1522$ & $8.3$e$+2$ & $8.5$e$-1$ &  $551$ & $7.8$e$+2$ & $6.8$e$-1$ &  $592$ & $8.4$e$+2$ & $7.2$e$-1$\\
                             &  $512^{2}$ & $16$ & $2995$ & $5.0$e$+3$ & $4.4$e$+0$ &  $860$ & $4.0$e$+3$ & $3.0$e$+0$ &  $825$ & $4.3$e$+3$ & $3.4$e$+0$\\
                             & $1024^{2}$ & $32$ & $5918$ & $3.0$e$+4$ & $2.2$e$+1$ & $1331$ & $1.8$e$+4$ & $1.3$e$+1$ & $1229$ & $2.0$e$+4$ & $1.5$e$+1$\\
  \bottomrule
 \end{tabular}
\end{table}

\begin{table}
 \caption{Matrix application results for Example 3.}
 \label{tab:ie_square3-a}
 \scriptsize
 \begin{tabular}{crr|c|cccr|cccr}
  \toprule
  & & & \multicolumn{1}{|c|}{\alg{rskelf2}} & \multicolumn{4}{|c}{\alg{hifie2}} & \multicolumn{4}{|c}{\alg{hifie2x}}\\
  $\epsilon$ & \multicolumn{1}{c}{$N$} & \multicolumn{1}{c|}{$\kappa$} & $t_{a/s}$ & $t_{a/s}$ & $e_{a}$ & $e_{s}$ & \multicolumn{1}{c|}{$n_{i}$} & $t_{a/s}$ & $e_{a}$ & $e_{s}$ & \multicolumn{1}{c}{$n_{i}$}\\
  \midrule
  \multirow{3}{*}{$10^{-6}$} &  $256^{2}$ &  $8$ & $4.1$e$-1$ & $3.5$e$-1$ & $1.8$e$-4$ & $8.5$e$-4$ & $3$ & $4.6$e$-1$ & $7.7$e$-6$ & $3.9$e$-5$ & $3$\\
                             &  $512^{2}$ & $16$ & $2.4$e$+0$ & $1.6$e$+0$ & $8.8$e$-4$ & $5.8$e$-3$ & $6$ & $2.1$e$+0$ & $1.8$e$-5$ & $1.7$e$-4$ & $3$\\
                             & $1024^{2}$ & $32$ & $1.2$e$+1$ & $8.3$e$+0$ & $5.5$e$-3$ & $5.7$e$-2$ & $9$ & $9.3$e$+0$ & $6.5$e$-5$ & $9.6$e$-4$ & $3$\\
  \bottomrule
 \end{tabular}
\end{table}

Overall, the results are similar to those in Example 2 but with added computational expense due to working over $\mathbb{C}$ and computing $H_{0}^{(1)} (kr)$. Moreover, although \eqref{eqn:lippmann-schwinger} is formally a second-kind IE, it becomes increasingly first-kind as $k \to \infty$. Thus, the problem is somewhat ill-conditioned, as reflected in the deterioration of $e_{a}$ and $e_{s}$ even for \alg{hifie2x}. Nevertheless, $F^{-1}$ remains a very good preconditioner, with $n_{i} = O(1)$ for \alg{hifie2x}. Interestingly, despite its inaccuracy, \alg{hifie2} is also quite effective for preconditioning: experimentally, we observe that $n_{i} = O(\log N)$, which can be justified as follows.

\begin{lemma}
 \label{lem:gmres}
 If $A = I + E$ with $\epsilon = \| E \|$, then the number of iterations for GMRES to solve \eqref{eqn:linear-system} to any target precision $\epsilon_{0} > 0$ is $n_{i} \leq \log_{\epsilon} \epsilon_{0}$.
\end{lemma}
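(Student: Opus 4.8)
The plan is to combine the minimal-residual property of GMRES with a single, well-chosen polynomial. First I would assume, without loss of generality, that GMRES is started from the zero vector, so that the initial residual is $r_{0} = f$; the general case is identical after replacing $f$ by the actual $r_{0}$. After $k$ steps, GMRES returns the iterate $u_{k}$ minimizing $\| f - A u_{k} \|$ over the $k$th Krylov subspace generated by $A$ and $f$. A standard and elementary consequence is that the residual $r_{k} = f - A u_{k}$ obeys
\begin{align*}
 \frac{\| r_{k} \|}{\| r_{0} \|} = \min_{\substack{p \in \poly_{k} \\ p(0) = 1}} \frac{\| p(A) r_{0} \|}{\| r_{0} \|} \leq \min_{\substack{p \in \poly_{k} \\ p(0) = 1}} \| p(A) \|,
\end{align*}
where $\poly_{k}$ denotes the polynomials of degree at most $k$; this is the only place the structure of GMRES is used.

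Next I would bound the right-hand side by testing the polynomial $p(z) = (1 - z)^{k}$, which has degree $k$ and satisfies $p(0) = 1$. Since $A = I + E$ gives $I - A = -E$, we have $p(A) = (I - A)^{k} = (-E)^{k}$, so $\| p(A) \| \leq \| E \|^{k} = \epsilon^{k}$ and hence $\| r_{k} \| / \| r_{0} \| \leq \epsilon^{k}$. Assuming $0 < \epsilon < 1$ so that $\log \epsilon < 0$, the target accuracy is reached once $\epsilon^{k} \leq \epsilon_{0}$, i.e., once $k \geq \log \epsilon_{0} / \log \epsilon = \log_{\epsilon} \epsilon_{0}$, which yields $n_{i} \leq \log_{\epsilon} \epsilon_{0}$ (up to rounding up to an integer).

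There is essentially no obstacle here: the argument is short and self-contained, with no spectral or conditioning information about $A$ required beyond $\| E \| = \epsilon$. The only points worth flagging are the harmless reduction to a zero initial guess, the implicit hypothesis $0 < \epsilon < 1$ needed both for the logarithm base to be meaningful and for the inequality to be reversed correctly when dividing by $\log \epsilon$, and the fact that the bound depends on nothing but $\epsilon$ — in particular, not on $N$. This last point is precisely what licenses the conclusion $n_{i} = O(1)$ when $A F^{-1}$ is preconditioned to fixed accuracy, and, more generally, shows that $n_{i}$ can degrade only through the dependence of $\epsilon = \| A F^{-1} - I \|$ on $N$.
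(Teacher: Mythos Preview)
Your proof is correct and follows essentially the same approach as the paper: invoke the GMRES minimal-residual bound $\|r_{k}\|/\|f\| \leq \min_{p \in \poly_{k},\,p(0)=1} \|p(A)\|$, test with $p(z) = (1-z)^{k}$ so that $\|p(A)\| \leq \|E\|^{k} = \epsilon^{k}$, and solve $\epsilon^{k} \leq \epsilon_{0}$ for $k$. Your added remarks on the zero initial guess and the implicit assumption $0 < \epsilon < 1$ are reasonable clarifications but do not change the argument.
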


\begin{proof}
 Let $u_{k}$ be the $k$th iterate with residual $r_{k} = A u_{k} - f$. Then the relative residual satisfies
 \begin{align*}
  \frac{\| r_{k} \|}{\| f \|} \leq \min_{p \in \poly_{k}} \| p(A) \|,
 \end{align*}
 where $\poly_{k}$ is the set of all polynomials $p$ of degree at most $k$ such that $p(0) = 1$ \cite{saad:1986:siam-j-sci-stat-comput}. Consider, in particular, the choice $p(z) = (1 - z)^{k}$. Then $\| p(A) \| \leq \| I - A \|^{k} = \| E \|^{k} = \epsilon^{k}$, so $\| r_{k} \| / \| f \| \leq \epsilon^{k}$. Setting the left-hand side equal to $\epsilon_{0}$ yields $n_{i} \equiv k \leq \log_{\epsilon} \epsilon_{0}$.
\end{proof}

\begin{corollary}
 Let $F = A + E$ and $F^{-1} = A^{-1} + G$ with $\| E \| \leq CN \epsilon \| A \|$ and $\| G \| \leq CN \epsilon \kappa (A) \| A^{-1} \|$ for some constant $C$ such that $CN \epsilon \kappa (A) \ll 1$. Then the number of iterations for GMRES to solve \eqref{eqn:linear-system} with preconditioner $F^{-1}$ is
 \begin{align*}
  n_{i} \sim \left( 1 + \log_{1 / \epsilon} CN \kappa (A) \right) \log_{\epsilon} \epsilon_{0}.
 \end{align*}
\end{corollary}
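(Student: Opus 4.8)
The plan is to apply Lemma~\ref{lem:gmres} to the \emph{preconditioned} operator and then convert the resulting iteration bound into the stated form by elementary logarithm arithmetic. Take preconditioning from, say, the right, so that GMRES is run on $\hat{A} = A F^{-1}$; this leaves the residual unchanged, since if $v_{k}$ is the $k$th iterate and $u_{k} = F^{-1} v_{k}$ then $A u_{k} - f = \hat{A} v_{k} - f$, so a relative residual $\epsilon_{0}$ for $\hat{A} v = f$ is exactly a relative residual $\epsilon_{0}$ for $A u = f$ (left preconditioning is analogous up to harmless factors). The first task is to bound $\| I - \hat{A} \|$. Since $F = A + E$, one has the algebraic identity $I - \hat{A} = I - A F^{-1} = (F - A) F^{-1} = E F^{-1}$, and substituting $F^{-1} = A^{-1} + G$ gives $I - \hat{A} = E A^{-1} + E G$. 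The two hypotheses then bound the pieces separately: $\| E A^{-1} \| \leq \| E \| \, \| A^{-1} \| \leq C N \epsilon \, \kappa(A)$ and $\| E G \| \leq \| E \| \, \| G \| \leq (C N \epsilon \, \kappa(A))^{2}$, the latter being negligible against the former because $C N \epsilon \, \kappa(A) \ll 1$. Hence $\| I - \hat{A} \| \leq \tilde{\epsilon}$ with $\tilde{\epsilon} \lesssim C N \epsilon \, \kappa(A) \ll 1$.

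Writing $\hat{A} = I + \hat{E}$ with $\| \hat{E} \| = \tilde{\epsilon}$, Lemma~\ref{lem:gmres} then gives $n_{i} \leq \log_{\tilde{\epsilon}} \epsilon_{0}$, and it remains only to unpack this. Changing base, $\log_{\tilde{\epsilon}} \epsilon_{0} = (\log_{\epsilon} \epsilon_{0}) / (\log_{\epsilon} \tilde{\epsilon})$, and since $\tilde{\epsilon} \sim C N \epsilon \, \kappa(A)$ we get $\log_{\epsilon} \tilde{\epsilon} = 1 + \log_{\epsilon}(C N \kappa(A)) = 1 - \log_{1/\epsilon}(C N \kappa(A))$. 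The constraint $\tilde{\epsilon} < 1$ forces $\log_{1/\epsilon}(C N \kappa(A)) < 1$, so $1 / (1 - \log_{1/\epsilon}(C N \kappa(A))) \sim 1 + \log_{1/\epsilon}(C N \kappa(A))$ by the geometric series, and multiplying back by $\log_{\epsilon} \epsilon_{0}$ yields $n_{i} \sim (1 + \log_{1/\epsilon}(C N \kappa(A))) \log_{\epsilon} \epsilon_{0}$.

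Two points deserve care, and the second is the only genuine subtlety. First, one must resist bounding $\| I - A F^{-1} \|$ directly by $\| A \| \, \| G \| \leq C N \epsilon \, \kappa(A)^{2}$, which is too lossy by a factor $\kappa(A)$; the identity $I - A F^{-1} = E F^{-1} = E A^{-1} + E G$ is what keeps only a single power of the condition number, and this is precisely where the separate hypothesis $\| E \| \leq C N \epsilon \| A \|$, rather than merely the bound on $G$, is used. Second, the symbol $\sim$ in the conclusion absorbs the replacement of $1/(1 - x)$ by $1 + x$ for $x = \log_{1/\epsilon}(C N \kappa(A)) \in (0,1)$; this is exact to leading order and faithful within a bounded factor only when $x$ stays away from $1$, which is the regime the hypothesis $C N \epsilon \, \kappa(A) \ll 1$ is meant to enforce, and I would flag it explicitly rather than hide it. Everything else is routine norm bookkeeping, so I anticipate no real obstacle beyond these two observations.
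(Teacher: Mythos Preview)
Your argument is correct and essentially identical to the paper's: you write $I - AF^{-1} = EF^{-1} = EA^{-1} + EG$ and bound the two pieces, whereas the paper uses left preconditioning and writes $I - F^{-1}A = F^{-1}E$ with $\|F^{-1}E\| \leq (\|A^{-1}\| + \|G\|)\|E\|$; these are dual computations yielding the same estimate $\tilde\epsilon \sim CN\epsilon\,\kappa(A)$, after which the logarithm manipulation and first-order expansion match the paper line for line.
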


\begin{proof}
 The preconditioned matrix is $F^{-1} A = F^{-1} (F - E) = I - F^{-1} E$, where
 \begin{align*}
  \| F^{-1} E \| \leq (\| A^{-1} \| + \| G \|) \| E \| \leq CN \epsilon \kappa (A) (1 + CN \epsilon \kappa (A)) \sim CN \epsilon \kappa (A),
 \end{align*}
 so Lemma \ref{lem:gmres} gives
 \begin{multline*}
  n_{i} \sim \log_{CN \epsilon \kappa (A)} \epsilon_{0} = \frac{\log \epsilon_{0}}{\log CN \epsilon \kappa (A)} = \left( \frac{1}{1 + \log_{\epsilon} CN \kappa (A)} \right) \frac{\log \epsilon_{0}}{\log \epsilon}\\
  = \left( \frac{1}{1 - \log_{1 / \epsilon} CN \kappa (A)} \right) \log_{\epsilon} \epsilon_{0}.
 \end{multline*}
 But $CN \kappa (A) \ll 1 / \epsilon$, so $\log_{1 / \epsilon} CN \kappa (A) \ll 1$. The claim now follows by first-order expansion of the term in parentheses.
\end{proof}

We remark that HIF-IE is effective only at low to moderate frequency since the rank structures employed break down as $k \to \infty$. In the limit, the only compression possible is due to Green's theorem, with HIF-IE reducing to RSF for volume IEs. The situation is yet worse for boundary IEs, for which no compression at all is available in general, and both RSF and HIF-IE revert to having $O(N^{3})$ complexity.

\subsection{Three Dimensions}
We next present three examples in 3D: a boundary IE and two volume IEs as in Examples 1 and 2.

\subsubsection*{Example 4}
Consider the second-kind boundary IE \eqref{eqn:2k-bie} on the unit sphere $\Gamma = S^{2}$, where $G(r)$ is as defined in \eqref{eqn:laplace-kernel}. It is possible to reparametrize $\Gamma$ in 2D and then use 2D algorithms, but we ran the full 3D solvers here. We represented $\Gamma$ as a collection of flat triangles and discretized via a centroid collocation scheme. Near-field interactions for all centroids within a local neighborhood of radius $h$ about each triangle, where $h$ is the average triangle diameter, were computed using fourth-order tensor-product Gauss-Legendre quadrature. This gives a linear system \eqref{eqn:linear-system} with {\em unsymmetric} $A$. Data for \alg{rskelf3}, \alg{hifie3}, and \alg{hifie3x} at $\epsilon = 10^{-3}$ and $10^{-6}$ are shown in Tables \ref{tab:ie_sphere1-f} and \ref{tab:ie_sphere1-a} with scaling results in Figure \ref{fig:ie_sphere1}.

\begin{table}
 \caption{Factorization results for Example 4.}
 \label{tab:ie_sphere1-f}
 \scriptsize
 \begin{tabular}{cr|rcc|rcc|rcc}
  \toprule
  & & \multicolumn{3}{|c|}{\alg{rskelf3}} & \multicolumn{3}{|c}{\alg{hifie3}} & \multicolumn{3}{|c}{\alg{hifie3x}}\\
  $\epsilon$ & \multicolumn{1}{c|}{$N$} & \multicolumn{1}{|c}{$|s_{L}|$} & $t_{f}$ & $m_{f}$ & \multicolumn{1}{|c}{$|s_{L}|$} & $t_{f}$ & $m_{f}$ & \multicolumn{1}{|c}{$|s_{L}|$} & $t_{f}$ & $m_{f}$\\
  \midrule
  \multirow{4}{*}{$10^{-3}$} &   $20480$ &  $3843$ & $3.3$e$+2$ & $4.7$e$-1$ &  $1143$ & $2.2$e$+2$ & $2.2$e$-1$ &  $2533$ & $3.5$e$+2$ & $3.4$e$-1$\\
                             &   $81920$ &  $7659$ & $2.7$e$+3$ & $2.2$e$+0$ &  $1247$ & $7.3$e$+2$ & $7.2$e$-1$ &  $3456$ & $1.7$e$+3$ & $1.3$e$+0$\\
                             &  $327680$ & $15091$ & $2.0$e$+4$ & $1.0$e$+1$ &  $1300$ & $3.0$e$+3$ & $2.9$e$+0$ &  $2875$ & $7.4$e$+3$ & $5.2$e$+0$\\
                             & $1310720$ & $27862$ & $1.4$e$+5$ & $4.2$e$+1$ &  $1380$ & $1.1$e$+4$ & $1.1$e$+1$ &  $2934$ & $2.6$e$+4$ & $1.8$e$+1$\\
  \midrule
  \multirow{3}{*}{$10^{-6}$} &   $20480$ &  $6939$ & $1.3$e$+3$ & $1.2$e$+0$ &  $4976$ & $1.2$e$+3$ & $8.0$e$-1$ &  $6256$ & $1.4$e$+3$ & $1.1$e$+0$\\
                             &   $81920$ & $14295$ & $1.5$e$+4$ & $6.2$e$+0$ &  $8619$ & $8.4$e$+3$ & $3.2$e$+0$ & $10748$ & $9.5$e$+3$ & $4.7$e$+0$\\
                             &  $327680$ & $28952$ & $1.3$e$+5$ & $3.1$e$+1$ & $13782$ & $5.0$e$+4$ & $1.2$e$+1$ & $13625$ & $5.4$e$+4$ & $1.9$e$+1$\\
  \bottomrule
 \end{tabular}
\end{table}

\begin{table}
 \caption{Matrix application results for Example 4.}
 \label{tab:ie_sphere1-a}
 \scriptsize
 \begin{tabular}{cr|c|ccc|ccc}
  \toprule
  & & \multicolumn{1}{|c|}{\alg{rskelf3}} & \multicolumn{3}{|c}{\alg{hifie3}} & \multicolumn{3}{|c}{\alg{hifie3x}}\\
  $\epsilon$ & \multicolumn{1}{c|}{$N$} & $t_{a/s}$ & $t_{a/s}$ & $e_{a}$ & $e_{s}$ & $t_{a/s}$ & $e_{a}$ & $e_{s}$\\
  \midrule
  \multirow{4}{*}{$10^{-3}$} &   $20480$ & $2.6$e$-1$ & $1.8$e$-1$ & $6.4$e$-3$ & $1.0$e$-2$ & $2.1$e$-1$ & $3.8$e$-4$ & $7.0$e$-4$\\
                             &   $81920$ & $1.2$e$+0$ & $5.3$e$-1$ & $4.0$e$-2$ & $5.1$e$-2$ & $6.7$e$-1$ & $1.0$e$-3$ & $1.8$e$-3$\\
                             &  $327680$ & $4.7$e$+0$ & $1.9$e$+0$ & $8.8$e$-2$ & $1.1$e$-1$ & $3.3$e$+0$ & $4.2$e$-4$ & $8.1$e$-4$\\
                             & $1310720$ & $2.2$e$+1$ & $7.2$e$+0$ & $2.4$e$-1$ & $3.3$e$-1$ & $1.1$e$+1$ & $6.0$e$-4$ & $7.1$e$-4$\\
  \midrule
  \multirow{3}{*}{$10^{-6}$} &   $20480$ & $5.6$e$-1$ & $4.3$e$-1$ & $3.7$e$-6$ & $6.8$e$-6$ & $4.9$e$-1$ & $4.1$e$-7$ & $8.0$e$-7$\\
                             &   $81920$ & $2.9$e$+0$ & $1.8$e$+0$ & $1.3$e$-5$ & $2.4$e$-5$ & $2.1$e$+0$ & $3.7$e$-7$ & $6.1$e$-7$\\
                             &  $327680$ & $1.5$e$+1$ & $6.5$e$+0$ & $5.6$e$-5$ & $1.0$e$-4$ & $1.1$e$+1$ & $5.9$e$-7$ & $1.0$e$-6$\\
  \bottomrule
 \end{tabular}
\end{table}

\begin{figure}
 \includegraphics{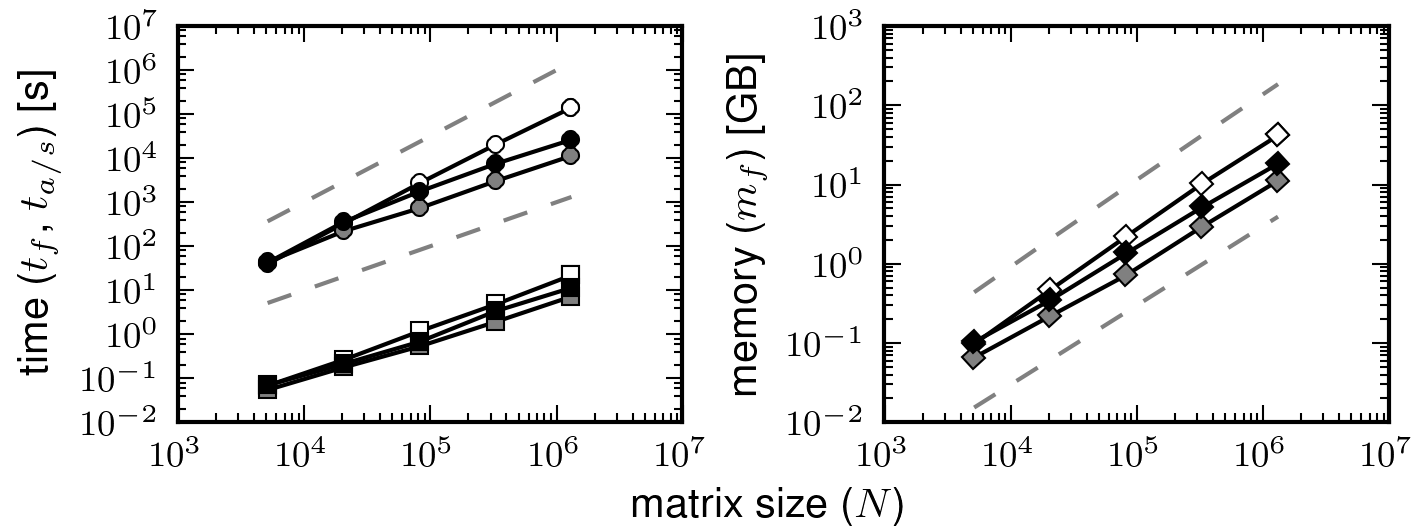}
 \caption{Scaling results for Example 4, comparing \alg{rskelf3} (white), \alg{hifie3} (gray), and \alg{hifie3x} (black) at precision $\epsilon = 10^{-3}$; all other notation as in Figure \ref{fig:ie_square1}.}
 \label{fig:ie_sphere1}
\end{figure}

Since $\Gamma$ is a 2D surface, $d = 2$ in Theorem \ref{thm:rskelf}, so we can expect RSF to have $O(N^{3/2})$ complexity, as observed. However, the skeleton size is substantially larger than in 2D, so the corresponding costs are much higher. The same is true for HIF-IE, which achieves quasilinear complexity as predicted in Theorem \ref{thm:hifie} and Corollary \ref{cor:hifie-2k}. As before, $e_{a}, e_{s} = O(\epsilon)$ for \alg{hifie3x} but suffer for \alg{hifie3}.

We also tested the accuracy of our algorithms in solving the associated PDE \eqref{eqn:dirich-laplace} by constructing an interior harmonic field
\begin{align*}
 v(x) = \sum_{j} G(\| x - y_{j} \|) q_{j}, \quad x \in \mathcal{D}
\end{align*}
due to $16$ random exterior sources $\{ y_{j} \}$ with $\| y_{j} \| = 2$, where the ``charge'' strengths $q_{j}$ were drawn from the standard uniform distribution. This induces the boundary data $f(x) = v(x) \vert_{x \in \Gamma}$, which returns the charge density $\sigma (x)$ upon solving \eqref{eqn:2k-bie}. The field $u(x)$ due to $\sigma (x)$ via the double-layer potential \eqref{eqn:double-layer} is then, in principle, identical to $v(x)$ by uniqueness of the boundary value problem. This equality was assessed by evaluating both $u(x)$ and $v(x)$ at $16$ random interior targets $\{ z_{j} \}$ with $\| z_{j} \| = 1/2$. The relative error between $\{ u(z_{j}) \}$ and $\{ v(z_{j}) \}$ is shown in Table \ref{tab:ie_sphere1-e}, from which we observe that \alg{rskelf3} and \alg{hifie3x} are both able to solve the PDE up to the discretization or approximation error.

\begin{table}
 \caption{Relative errors against exact solutions for the PDE in Example 4.}
 \label{tab:ie_sphere1-e}
 \scriptsize
 \begin{tabular}{cr|c|c|c}
  \toprule
  $\epsilon$ & \multicolumn{1}{c|}{$N$} & \alg{rskelf3} & \alg{hifie3} & \alg{hifie3x}\\
  \midrule
  \multirow{4}{*}{$10^{-3}$} &   $20480$ & $7.6$e$-4$ & $2.8$e$-3$ & $7.8$e$-4$\\
                             &   $81920$ & $3.0$e$-4$ & $3.0$e$-2$ & $4.2$e$-4$\\
                             &  $327680$ & $1.2$e$-4$ & $8.1$e$-2$ & $2.1$e$-4$\\
                             & $1310720$ & $4.8$e$-4$ & $3.1$e$-1$ & $2.0$e$-4$\\
  \midrule
  \multirow{3}{*}{$10^{-6}$} &   $20480$ & $7.9$e$-4$ & $7.9$e$-4$ & $7.8$e$-4$\\
                             &   $81920$ & $3.7$e$-4$ & $3.7$e$-4$ & $3.7$e$-4$\\
                             &  $327680$ & $1.8$e$-4$ & $1.8$e$-4$ & $1.8$e$-4$\\
  \bottomrule
 \end{tabular}
\end{table}

\subsubsection*{Example 5}
Now consider the 3D analogue of Example 1, i.e., \eqref{eqn:ie} with $a(x) \equiv 0$, $b(x) \equiv c(x) = 1$, $K(r) = 1/(4 \pi r)$, and $\Omega = (0, 1)^{3}$, discretized over a uniform grid with adaptive quadratures for the diagonal entries. Data for \alg{rskelf3} and \alg{hifie3} at $\epsilon = 10^{-3}$ and $10^{-6}$ are given in Tables \ref{tab:ie_cube1-f} and \ref{tab:ie_cube1-a} with scaling results in Figure \ref{fig:ie_cube1}.

\begin{table}
 \caption{Factorization results for Example 5.}
 \label{tab:ie_cube1-f}
 \scriptsize
 \begin{tabular}{cr|rcc|rcc}
  \toprule
  & & \multicolumn{3}{|c|}{\alg{rskelf3}} & \multicolumn{3}{|c}{\alg{hifie3}}\\
  $\epsilon$ & \multicolumn{1}{c|}{$N$} & \multicolumn{1}{|c}{$|s_{L}|$} & $t_{f}$ & $m_{f}$ & \multicolumn{1}{|c}{$|s_{L}|$} & $t_{f}$ & $m_{f}$\\
  \midrule
  \multirow{3}{*}{$10^{-3}$} &  $32^{3}$ &   $5900$ & $5.4$e$+2$ & $1.0$e$+0$ &   $969$ & $1.6$e$+2$ & $2.7$e$-1$\\
                             &  $64^{3}$ &  $24005$ & $3.9$e$+4$ & $1.9$e$+1$ &  $1970$ & $3.4$e$+3$ & $2.6$e$+0$\\
                             & $128^{3}$ & \tabdash &        --- &        --- &  $3981$ & $5.5$e$+4$ & $2.5$e$+1$\\
  \midrule
  \multirow{2}{*}{$10^{-6}$} &  $32^{3}$ &  $11132$ & $2.4$e$+3$ & $2.8$e$+0$ &  $6108$ & $2.1$e$+3$ & $1.4$e$+0$\\
                             &  $64^{3}$ & \tabdash &        --- &        --- & $16401$ & $1.0$e$+5$ & $2.0$e$+1$\\
  \bottomrule
 \end{tabular}
\end{table}

\begin{table}
 \caption{Matrix application results for Example 5.}
 \label{tab:ie_cube1-a}
 \scriptsize
 \begin{tabular}{cr|c|cccr}
  \toprule
  & & \multicolumn{1}{|c|}{\alg{rskelf3}} & \multicolumn{4}{|c}{\alg{hifie3}}\\
  $\epsilon$ & \multicolumn{1}{c|}{$N$} & $t_{a/s}$ & $t_{a/s}$ & $e_{a}$ & $e_{s}$ & \multicolumn{1}{c}{$n_{i}$}\\
  \midrule
  \multirow{3}{*}{$10^{-3}$} &  $32^{3}$ & $4.0$e$-1$ & $1.6$e$-1$ & $3.1$e$-4$ & $2.7$e$-2$ & $6$\\
                             &  $64^{3}$ & $6.2$e$+0$ & $1.5$e$+0$ & $3.6$e$-4$ & $4.4$e$-2$ & $7$\\
                             & $128^{3}$ &        --- & $1.4$e$+1$ & $1.2$e$-3$ & $7.2$e$-2$ & $8$\\
  \midrule
  \multirow{2}{*}{$10^{-6}$} &  $32^{3}$ & $1.1$e$+0$ & $5.2$e$-1$ & $1.2$e$-7$ & $2.8$e$-5$ & $3$\\
                             &  $64^{3}$ &        --- & $6.1$e$+0$ & $2.4$e$-7$ & $9.5$e$-5$ & $3$\\
  \bottomrule
 \end{tabular}
\end{table}

\begin{figure}
 \includegraphics{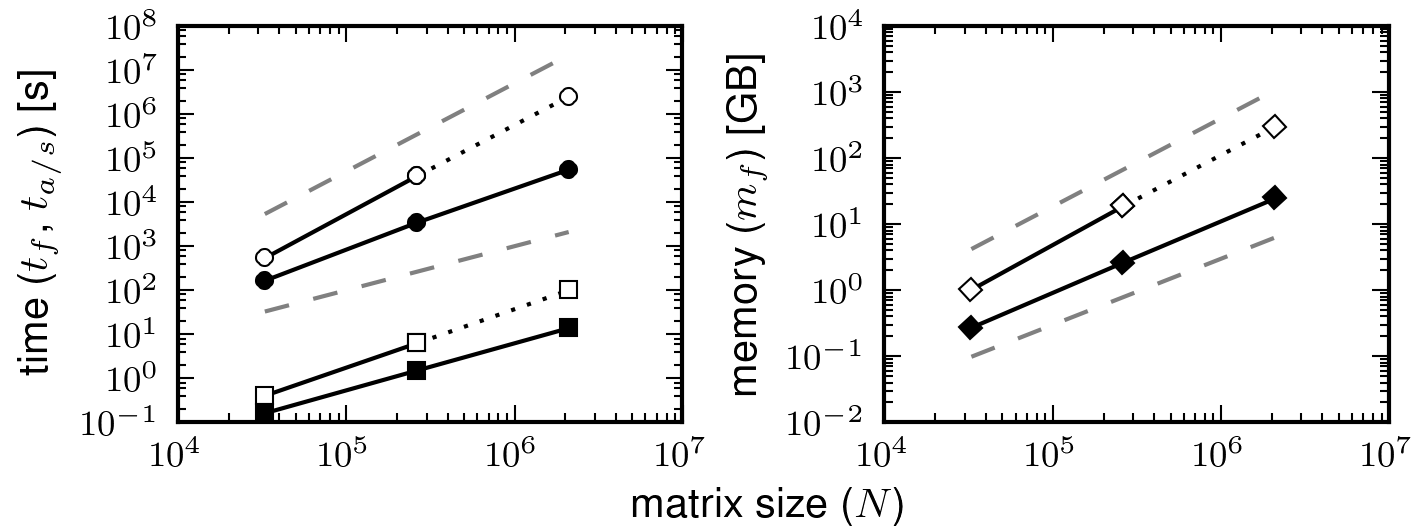}
 \caption{Scaling results for Example 5, comparing \alg{rskelf3} (white) and \alg{hifie3} (black) at precision $\epsilon = 10^{-3}$. Dotted lines denote extrapolated values. Included also are reference scalings of $O(N)$ and $O(N^{2})$ (left), and $O(N)$ and $O(N^{4/3}$) (right); all other notation as in Figure \ref{fig:ie_square1}.}
 \label{fig:ie_cube1}
\end{figure}

It is immediate that $t_{f} = O(N^{2})$ and $t_{a/s} = O(N^{4/3})$ for RSF, which considerably degrades its performance for large $N$. Indeed, we were unable to run \alg{rskelf3} for $N = 128^{3}$ because of the excessive memory cost. In contrast, HIF-IE scales much better though does not quite achieve $O(N)$ complexity as stated in Theorem \ref{thm:hifie}: the empirical scaling for $t_{f}$ at $\epsilon = 10^{-3}$, for instance, is approximately $O(N^{1.3})$. We believe this to be a consequence of the large interaction ranks in 3D, which make the asymptotic regime rather difficult to reach. Still, even the experimental growth rate of $k_{\ell} \simeq O(2^{\ell})$ would be sufficient for theoretical $O(N \log N)$ complexity. In parallel with Example 1, $e_{a} = O(\epsilon)$ but $e_{s}$ is somewhat larger due to ill-conditioning. We found $F^{-1}$ to be a very effective preconditioner throughout.

\subsubsection*{Example 6}
Finally, we consider the 3D analogue of Example 2, i.e., Example 5 but with $a(x) \equiv 1$. This is a well-conditioned second-kind IE, which we factored using \alg{rskelf3}, \alg{hifie3}, and \alg{hifie3x}. The data are summarized in Tables \ref{tab:ie_cube2-f} and \ref{tab:ie_cube2-a} with scaling results shown in Figure \ref{fig:ie_cube2}.

\begin{table}
 \caption{Factorization results for Example 6.}
 \label{tab:ie_cube2-f}
 \scriptsize
 \begin{tabular}{cr|rcc|rcc|rcc}
  \toprule
  & & \multicolumn{3}{|c|}{\alg{rskelf3}} & \multicolumn{3}{|c}{\alg{hifie3}} & \multicolumn{3}{|c}{\alg{hifie3x}}\\
  $\epsilon$ & \multicolumn{1}{c|}{$N$} & \multicolumn{1}{|c}{$|s_{L}|$} & $t_{f}$ & $m_{f}$ & \multicolumn{1}{|c}{$|s_{L}|$} & $t_{f}$ & $m_{f}$ & \multicolumn{1}{|c}{$|s_{L}|$} & $t_{f}$ & $m_{f}$\\
  \midrule
  \multirow{3}{*}{$10^{-3}$} &  $32^{3}$ &   $5900$ & $5.4$e$+2$ & $1.0$e$+0$ &  $1271$ & $2.1$e$+2$ & $3.9$e$-1$ &  $3127$ & $5.0$e$+2$ & $6.6$e$-1$\\
                             &  $64^{3}$ &  $24005$ & $4.0$e$+4$ & $1.9$e$+1$ &  $2023$ & $3.3$e$+3$ & $3.7$e$+0$ &  $7141$ & $1.3$e$+4$ & $8.5$e$+0$\\
                             & $128^{3}$ & \tabdash &        --- &        --- &  $5105$ & $5.2$e$+4$ & $3.6$e$+1$ & $17491$ & $3.5$e$+5$ & $1.1$e$+2$\\
  \midrule
  \multirow{2}{*}{$10^{-6}$} &  $32^{3}$ &  $11132$ & $2.4$e$+3$ & $2.8$e$+0$ &  $5611$ & $1.6$e$+3$ & $1.4$e$+0$ &  $8620$ & $2.4$e$+3$ & $2.2$e$+0$\\
                             &  $64^{3}$ & \tabdash &        --- &        --- & $12558$ & $5.4$e$+4$ & $1.6$e$+1$ & $25797$ & $8.6$e$+4$ & $3.4$e$+1$\\
  \bottomrule
 \end{tabular}
\end{table}

\begin{table}
 \caption{Matrix application results for Example 6.}
 \label{tab:ie_cube2-a}
 \scriptsize
 \begin{tabular}{cr|c|ccc|ccc}
  \toprule
  & & \multicolumn{1}{|c|}{\alg{rskelf3}} & \multicolumn{3}{|c}{\alg{hifie3}} & \multicolumn{3}{|c}{\alg{hifie3x}}\\
  $\epsilon$ & \multicolumn{1}{c|}{$N$} & $t_{a/s}$ & $t_{a/s}$ & $e_{a}$ & $e_{s}$ & $t_{a/s}$ & $e_{a}$ & $e_{s}$\\
  \midrule
  \multirow{3}{*}{$10^{-3}$} &  $32^{3}$ & $4.0$e$-1$ & $2.0$e$-1$ & $4.6$e$-3$ & $5.0$e$-3$ & $2.2$e$-1$ & $1.1$e$-4$ & $1.3$e$-4$\\
                             &  $64^{3}$ & $6.6$e$+0$ & $1.8$e$+0$ & $4.4$e$-2$ & $4.7$e$-2$ & $3.1$e$+0$ & $6.2$e$-4$ & $6.8$e$-4$\\
                             & $128^{3}$ &        --- & $1.7$e$+1$ & $6.7$e$-2$ & $7.3$e$-2$ & $5.1$e$+1$ & $1.7$e$-3$ & $1.9$e$-3$\\
  \midrule
  \multirow{2}{*}{$10^{-6}$} &  $32^{3}$ & $1.0$e$+0$ & $5.7$e$-1$ & $8.5$e$-6$ & $9.7$e$-6$ & $7.4$e$-1$ & $2.9$e$-7$ & $3.4$e$-7$\\
                             &  $64^{3}$ &        --- & $6.4$e$+0$ & $5.9$e$-5$ & $6.8$e$-5$ & $1.2$e$+1$ & $1.5$e$-6$ & $1.8$e$-6$\\
  \bottomrule
 \end{tabular}
\end{table}

\begin{figure}
 \includegraphics{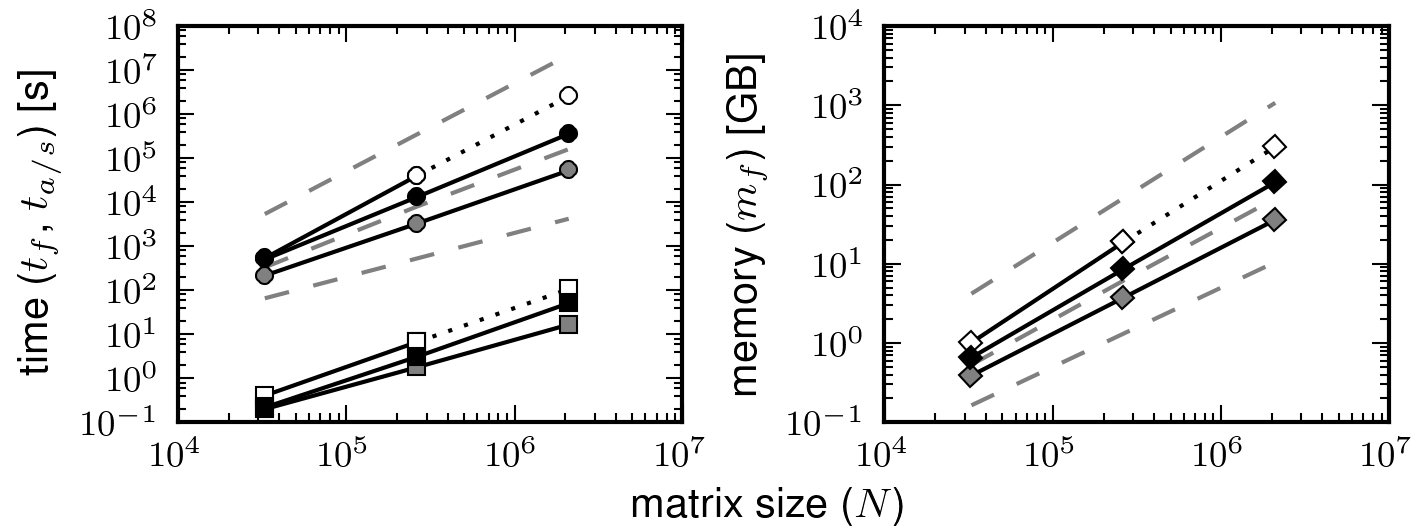}
 \caption{Scaling results for Example 6, comparing \alg{rskelf3} (white), \alg{hifie3} (gray), and \alg{hifie3x} (black) at precision $\epsilon = 10^{-3}$. Included also are reference scalings of $O(N)$, $O(N \log^{6} N)$, and $O(N^{2})$ (left); and $O(N)$, $O(N \log^{2} N)$, and $O(N^{4/3})$ (right). All other notation as in Figure \ref{fig:ie_cube1}.}
 \label{fig:ie_cube2}
\end{figure}

Algorithms \alg{rskelf3} and \alg{hifie3} behave very similarly as in Example 5 but with some error propagation for \alg{hifie3} as discussed in Section \ref{sec:hifie:2k-mod}. Full accuracy is restored using \alg{hifie3x} but at the cost of significantly larger skeleton sizes. The empirical complexity of \alg{hifie3x} hence suffers but remains quite favorable compared to that of \alg{rskelf3}. We also find a good fit with the complexity estimates of Corollary \ref{cor:hifie-2k}, though the presumed penalty for not yet reaching the asymptotic regime may imply that the proposed bounds are overly pessimistic.

\section{Generalizations and Conclusions}
\label{sec:conclusion}
In this paper, we have introduced HIF-IE for the efficient factorization of discretized integral operators associated with elliptic PDEs in 2D and 3D. HIF-IE combines a novel matrix sparsification framework with recursive dimensional reduction to construct an approximate generalized LU decomposition at estimated quasilinear cost. The latter enables significant compression over RS and is critical for improving the asymptotic complexity, while the former substantially simplifies the algorithm and permits its formulation as a factorization. This representation allows the rapid application of both the matrix and its inverse, and therefore provides a generalized FMM, direct solver, or preconditioner, depending on the accuracy. We have also presented RSF, a factorization formulation of RS \cite{gillman:2012:front-math-china,greengard:2009:acta-numer,ho:2012:siam-j-sci-comput,martinsson:2005:j-comput-phys} that is closely related to MF \cite{duff:1983:acm-trans-math-software,george:1973:siam-j-numer-anal} for sparse matrices. Indeed, a key observation underlying both RSF and HIF-IE is that structured dense matrices can be sparsified very efficiently via the ID. This suggests that well-developed sparse techniques can be applied, and we anticipate that fully exploring this implication will lead to new fast algorithms for dense linear algebra.

The skeletonization operator at the core of RSF and HIF-IE can be interpreted in several ways. For example, we can view it as an approximate local change of basis in order to gain sparsity. Unlike traditional approaches \cite{alpert:1993:siam-j-sci-comput,beylkin:1991:comm-pure-appl-math,dahmen:1997:acta-numer}, however, this basis is determined optimally on the fly using the ID. Skeletonization can also be regarded as adaptive numerical upscaling or as implementing specialized restriction and prolongation operators in the context of multigrid methods \cite{hackbusch:1985:springer}.

Although we have presently only considered matrices arising from IEs, the same methods can also be applied (with minor modification) to various general structured matrices such as those encountered in Gaussian process modeling \cite{ambikasaran:arxiv,chen:2014:siam-j-sci-comput} or sparse differential formulations of PDEs \cite{bebendorf:2003:numer-math,gillman:2014:adv-comput-math,xia:2009:siam-j-matrix-anal-appl}. In particular, HIF-IE can be heavily specialized to the latter setting by explicitly taking advantage of existing sparsity. The resulting \defn{hierarchical interpolative factorization for differential equations} (HIF-DE) is described in the companion paper \cite{ho:comm-pure-appl-math} and likewise achieves estimated linear or quasilinear complexity in 2D and 3D.

Some important directions for future research include:
\begin{itemize}
 \itemsep 1ex
 \item
  Obtaining analytical estimates of the interaction rank for SCIs, even for the simple case of the Laplace kernel \eqref{eqn:laplace-kernel}. This would enable a much more precise understanding of the complexity of HIF-IE, which has yet to be rigorously established.
 \item
  Parallelizing RSF and HIF-IE, both of which are organized according to a tree structure where each node at a given level can be processed independently of the rest. The parallelization of HIF-IE holds particular promise and should have significant impact on practical scientific computing.
 \item
  Investigating alternative strategies for reducing skeleton sizes in 3D, which can still be quite large, especially at high precision. New ideas may be required to build truly large-scale direct solvers.
 \item
  Understanding the extent to which our current techniques can be adapted to highly oscillatory kernels, which possess rank structures of a different type than that exploited here \cite{engquist:2009:comm-math-sci,engquist:2007:siam-j-sci-comput}. Such high-frequency problems can be extremely difficult to solve by iteration and present a prime target area for future fast direct methods.
\end{itemize}







\ack


We would like to thank Leslie Greengard for many helpful discussions, Lenya Ryzhik for providing computing resources, and the anonymous referees for their careful reading of the manuscript, which have improved the paper tremendously. K.L.H.\ was partially supported by the National Science Foundation under award DMS-1203554. L.Y.\ was partially supported by the National Science Foundation under award DMS-1328230 and the U.S.\ Department of Energy's Advanced Scientific Computing Research program under award DE-FC02-13ER26134/DE-SC0009409.


\frenchspacing
\bibliographystyle{plain}

\end{document}